\newtheorem{theorem}{Theorem}[section]
\newtheorem{lemma}[theorem]{Lemma}
\newtheorem{proposition}[theorem]{Proposition}
\newtheorem{corollary}[theorem]{Corollary}
\theoremstyle{definition}
\newtheorem{definition}[theorem]{Definition}
\theoremstyle{remark}
\newtheorem{remark}[theorem]{Remark}
\newtheorem{example}[theorem]{Example}
\numberwithin{equation}{section}
\newcommand{\supp}{\mathop{\mathrm{supp}\,}\nolimits}
\newcommand{\id}{\mathop{\mathrm{id}}\nolimits}
\newcommand{\dint}{\mathrm{d}}
\newcommand{\Dd}{\mathrm{D}}
\newcommand{\tr}[1]{\mathrm{tr}\rule[-0.5ex]{0ex}{1.2ex}_{#1}}
\newcommand{\ve}{\ensuremath{\varepsilon}}
\newcommand{\sul}{\underline{\mathfrak{s}}}
\newcommand{\sol}{\overline{\mathfrak{s}}}
\newcommand{\upind}[1]{\sol\left(\bm{#1}\right)}
\newcommand{\lowind}[1]{\sul\left(\bm{#1}\right)}
\newcommand{\beq}{\begin{equation}}
\newcommand{\eeq}{\end{equation}}
\newcommand{\bli}{\begin{list}{}{\labelwidth6mm\leftmargin8mm}}
\newcommand{\eli}{\end{list}}
\newcommand{\paren}[1]{\bm{(}#1\bm{)}}
\newcommand{\whole}[1]{\ensuremath\left\lfloor #1 \right\rfloor}
\newcommand{\gseq}[3]{\ensuremath {(#1_#2)_{#2 \in #3}}}
\newcommand{\ds}{\displaystyle}
\newcommand{\Sisih}{\ensuremath{\Sigma}_{\bm{\sigma},\bm{h}}}
\newcommand{\real}{\ensuremath{\mathbb{R}}}
\newcommand{\rn}{\ensuremath{\mathbb{R}^n}}
\newcommand{\nat}{\ensuremath{\mathbb{N}}}
\newcommand{\no}{\ensuremath{\mathbb{N}_0}}
\newcommand{\non}{\ensuremath{\mathbb{N}^n_0}}
\newcommand{\comp}{\ensuremath{\mathbb{C}}}
\newcommand{\zn}{\ensuremath{\mathbb{Z}^n}}
\newcommand{\HH}{\ensuremath{\mathbb H}}
\newcommand{\SpRn}{\mathcal{S}'(\rn)}
\newcommand{\SRn}{\mathcal{S}(\rn)}
\newcommand{\B}{B^s_{p,q}}
\newcommand{\Bsi}{B^{\bm{\sigma}}_{p,q}}
\newcommand{\Bsg}{{\mathbb{B}}^{\bm{\sigma}}_{p,q}(\Gamma)}
\newcommand{\Bsge}{{\mathbb{B}}^{\bm{\sigma}}_{p_1,q_1}(\Gamma)}
\newcommand{\Btau}{B^{\bm{\tau}}_{p,q}}
\newcommand{\Btgz}{{\mathbb{B}}^{\bm{\tau}}_{p_2,q_2}(\Gamma)}
\newcommand{\Bsih}{B^{\bm{\sigma} \bm{h}^{1/p}\paren{n}^{1/p}}_{p,q}\!(\rn)}
\newcommand{\Bsihe}{B^{\bm{\sigma} \bm{h}^{1/p_1}\paren{n}^{1/p_1}}_{p_1,q_1}\!(\rn)}
\newcommand{\Btauhz}{B^{\bm{\tau} \bm{h}^{1/p_2}\paren{n}^{1/p_2}}_{p_2,q_2}\!(\rn)}
\newcommand{\egX}{\mbox{${\mathcal E}_{\!\mathsf G}^X$}}
\newcommand{\egv}[1]{\mbox{${\mathcal E}_{\!\mathsf G}^{#1}$}}
\newcommand{\envg}{\mbox{${\mathfrak E}\rule[-0.5ex]{0em}{2.3ex}_{\!\mathsf G}$}}
\newcommand{\uGindex}[1]{u\rule[-0.5ex]{0ex}{1.1ex}_{\mathsf G}^{#1}}
\begin{document}
\setcounter{page}{1}

\title{Embeddings of Besov spaces on fractal $h$-sets}

\author[A.M. Caetano, D.D. Haroske]{Ant{\'o}nio M. Caetano$^1$ and Dorothee D. Haroske$^2$$^{*}$}

\address{$^{1}$ Center for R\&D in Mathematics and Applications, Department of Mathematics, University of Aveiro, 3810-193 Aveiro, Portugal}
\email{\textcolor[rgb]{0.00,0.00,0.84}{acaetano@ua.pt}}

\address{$^{2}$ Institute of Mathematics, Friedrich-Schiller-University Jena, 07737 Jena, Germany}
\email{\textcolor[rgb]{0.00,0.00,0.84}{dorothee.haroske@uni-jena.de}}

\subjclass[2010]{Primary 46E35; Secondary 28A80.}

\keywords{fractal $h$-sets; traces; Besov spaces of generalised smoothness; embeddings}

\date{}

\begin{abstract}
Let $\Gamma$ be a fractal $h$-set and $\Bsg$ be a trace space of Besov
type defined on $\Gamma$. While we dealt in \cite{Cae-env-h} with
growth envelopes of such spaces mainly and investigated the existence
of traces in detail in \cite{CaH-3}, we now study continuous embeddings between
different spaces of that type on $\Gamma$. We obtain necessary and sufficient
conditions for such an embedding to hold, and can prove in some cases
complete characterisations. It also includes the
situation when the target space is of type $L_r(\Gamma)$ and, as a by-product, under mild assumptions on the $h$-set $\Gamma$ we obtain the exact conditions on $\sigma$, $p$ and $q$ for which the trace space $\Bsg$ exists. We can also
refine some embedding results for spaces of generalised smoothness on $\rn$.
\end{abstract} \maketitle

\section*{Introduction}
We study in this paper continuous embeddings of trace spaces on a
fractal $h$-set $\Gamma$. It is the natural continuation and so far
final step of our project to characterise traces taken on some $h$-set
$\Gamma$, which led to the papers \cite{Cae-env-h,CaH-3}
already. Questions of that type are of particular interest in view of boundary value problems
of elliptic operators, where the solutions belong to some appropriate Besov (or Sobolev) space. One standard method is to start with assertions about traces on hyperplanes and then to transfer these
results to bounded domains with sufficiently smooth boundary
afterwards. Further studies may concern compactness or regularity results,
leading to the investigation of spectral properties. However, when it comes to irregular (or fractal) boundaries, one has to circumvent a lot of difficulties following that way, such that another method turned out to be more appropriate. This approach was proposed by Edmunds and Triebel in connection with smooth boundaries first in \cite{ET} and then extended to fractal $d$-sets in \cite{ET7,ET6,T-Frac}. Later the setting of $d$-sets was extended to $(d,\Psi)$-sets by Moura \cite{moura-diss} and finally to the more general $h$-sets by Bricchi \cite{bricchi-diss}. \\
The idea is rather simple to describe, but the details are much more complicated: at first one determines the trace spaces of certain Besov (or Sobolev) spaces as precisely as possible, studies (compact) embeddings of such spaces into appropriate target spaces together with their entropy and approximation numbers afterwards, and finally applies Carl's or Weyl's inequalities to link eigenvalues and entropy or approximation numbers. If one is in the lucky situation that, on one hand, one has atomic or wavelet decomposition results for the corresponding spaces, and, on the other hand, the irregularity of the fractal can be characterised by its local behaviour (within `small' cubes or balls), then there is some chance to shift all the arguments to appropriate sequence spaces which are usually easier to handle. This explains our preference for fractal $h$-sets and Besov spaces. But still the problem is not so simple and little is known so far. \\
Dealing with spaces on $h$-sets we refer to \cite{CL-2,CL-3,K-Z,lopes-diss}, and, probably closest to our approach here, 
 \cite[Ch.~7]{T-F3}. There it turns out that one first needs a sound knowledge about the existence and quality of the corresponding trace spaces. Returning to the first results in that respect in \cite{bricchi-diss}, see also \cite{bricchi-fsdona,bricchi-3,bricchi-4}, we found that the approach can (and should) be extended for later applications. More precisely, for a positive continuous and non-decreasing
function $h : (0,1]\to\real$ (a gauge function) with $\lim_{r\to 0} h(r)=0$, a non-empty compact set $\Gamma\subset\rn$ is called {\em $h$-set} if there exists a finite Radon measure $\mu$ in $\rn$ with $\supp\mu=\Gamma$ and 
\[
\mu(B(\gamma,r)) \sim h(r),\qquad r\in(0,1],\ \gamma\in\Gamma,
\]
see  also \cite[Ch.~2]{Rogers} and \cite[p.~60]{mattila}. In the special case $h(r)=r^d$, $0< d< n$, $\Gamma$ is called $d$-set (in the sense of \cite[Def.~3.1]{T-Frac}, see also \cite{JW-1,mattila} -- be aware that this is different from the sense used in \cite{falconer}). Recall that some self-similar fractals are outstanding examples of $d$-sets; for instance,
the usual (middle-third) Cantor set in $\real^1$ is a $d$-set for $d = \ln 
2/\ln 3$, and the Koch curve in $\real^2$ is a $d$-set for $d=\ln 4/\ln
3$. \\
The trace is defined by completion of pointwise traces of $\varphi\in\mathcal{S}(\rn)$, assuming that 
for $0<p<\infty$ we have in addition $\ds \| \varphi\raisebox{-0.5ex}[1ex][1ex]{$\big|_{\Gamma}$}~ | L_p(\Gamma) \| \lesssim \  \| \varphi | B^t_{p,q}(\rn)\|$ for suitable parameters $t\in\real$ and $0<q<\infty$. 
In case of a compact $d$-set $\Gamma$, $0<d<n$, this results in
\beq
\tr{\Gamma} B^{\frac{n-d}{p}}_{p,q}(\rn) = L_p(\Gamma)\quad \text{if} \quad 0<q\leq\min (p,1)
\label{Lp-trace-d}
\eeq
and, for $s>\frac{n-d}{p}$,
\[
\tr{\Gamma}\B(\rn)  = {\mathbb B}^{s-\frac{n-d}{p}}_{p,q}(\Gamma),
\]
see \cite{T-Frac} with some later additions in \cite{T-func,T-F3}. Here $\B(\rn)$ are the usual Besov spaces defined on $\rn$. In the classical case $d=n-1$, $0<p<\infty$, $0<q\leq\min (p,1)$, \eqref{Lp-trace-d} reproduces -- up to our compactness  requirement -- the well-known trace result $\tr{\real^{n-1}} B^{\frac1p}_{p,q}(\rn) = L_p(\real^{n-1})$. \\
In case of $h$-sets $\Gamma$ one needs to consider Besov spaces of generalised smoothness $\Bsi(\rn)$ which naturally extend $\B(\rn)$: instead of the smoothness parameter $s\in\real$ one now admits sequences $\bm{\sigma} = (\sigma_j)_{j\in\no}$ of positive numbers which satisfy $ \sigma_j  \sim  \sigma_{j+1}$, $j\in\no$. Such spaces are special cases of the scale $B^{\bm{\sigma},N}_{p,q}(\rn)$ which is well studied by different approaches, namely the interpolation one \cite{merucci,C-F} and the theory developed independently by Gol'dman and Kalyabin in the late 70's and early 80's of the last century, see the
survey \cite{Kal-Liz} and the appendix \cite{lizorkin} which cover the
extensive (Russian) literature at that time. We shall rely on the Fourier-analytical
approach as presented in \cite{F-L}. It turns out that the classical smoothness $s\in\real$ has to be replaced by certain regularity indices $\upind{\sigma}$, $\lowind{\sigma}$ of $\bm{\sigma}$. In case of $\bm{\sigma}= (2^{js})_j$ the spaces $\Bsi(\rn)$ and $\B(\rn)$ coincide and $\upind{\sigma}=\lowind{\sigma}=s$. \\
Dealing with traces on $h$-sets $\Gamma$ in a similar way as for $d$-sets, one obtains
\[
\tr{\Gamma} \Btau(\rn)= \Bsg,
\]
where the sequence $\bm{\tau}$ (representing smoothness) depends on $\bm{\sigma}$, $h$ (representing the geometry of $\Gamma$) and the underlying $\rn$; in particular, with $\bm{h} :=
\left(h(2^{-j})\right)_j$, $ \bm{h_{p}} = \left(h(2^{-j})^\frac1p\
2^{j\frac{n}{p}}\right)_j$, the counterpart of \eqref{Lp-trace-d} reads as
\[
\tr{\Gamma} B^{\bm{h_{p}}}_{p,q}(\rn) = L_p(\Gamma), \qquad 0<q\leq\min (p,1). 
\]

These results were already obtained in \cite{bricchi-3} under some
additional restrictions. In \cite{Cae-env-h} we studied sufficient
conditions for the existence of such traces again (in the course of
dealing with growth envelopes, characterising some singularity
behaviour) and returned in \cite{CaH-3} to the subject to obtain
`necessary' conditions, or, more precisely, conditions for the
non-existence of traces. This problem is closely connected with the
so-called {\em dichotomy}; we refer to \cite{CaH-3} for further details.\\

In the present paper we study embeddings of type 
\begin{equation}
\id_\Gamma: \Bsge \rightarrow \Btgz
\label{i-0}
\end{equation}
in some detail. But before we concentrate on these trace spaces we first inspect and slightly improve the related embedding results for spaces on $\rn$. In Theorem~\ref{prop-Bsi-emb} we can show that for two admissible sequences $\bm{\sigma}$ and $\bm{\tau}$, $0<p_1, p_2 < \infty$, $0<q_1,q_2,\leq\infty$, and $q^\ast$ given by $\frac{1}{q^\ast}=\max\left(\frac{1}{q_2}-\frac{1}{q_1},0\right)$,
\[
\id_{\rn} : B^{\bm{\sigma}}_{p_1,q_1}(\rn) \rightarrow B^{\bm{\tau}}_{p_2,q_2}(\rn)
\]
exists and is bounded if, and only if, $p_1\leq p_2$ and $\left(\sigma_j^{-1}\tau_j 2^{jn(\frac{1}{p_1}-\frac{1}{p_2})}\right)_j \in \ell_{q^\ast}$. Next we study embeddings similar to \eqref{i-0}, but where the target space is a Lebesgue space $L_r(\Gamma)$. The corresponding results are of the following type: Assume that $\Gamma$ is an $h$-set satisfying some additional conditions, such that the corresponding trace spaces exist, $\bm{\sigma}$ is an admissible sequence, $0<p\leq r<\infty$ and $0<q\leq\min (r,1)$. Then $\id_\Gamma: \Bsg \rightarrow \ L_r(\Gamma)$ exists and is bounded if, and only if,
$\left(\sigma_j^{-1} h(2^{-j})^{\frac1r-\frac1p}\right)_j \in \ell_{\infty}$, cf. Proposition~\ref{coro-1}. There are further similar characterisations referring to different parameter settings. When $r=p$, the outcome reads as $\id_\Gamma: \Bsg \rightarrow \ L_p(\Gamma)$ exists and is bounded if, and only if,
\begin{equation}
\bm{\sigma}^{-1} \in \begin{cases}
\ell_{q'}, &\text{if}\quad 1\leq p<\infty \quad\text{or}\quad 0<q\leq p<1,\\
\ell_{v_p}, &\text{if}\quad  0<p< 1 \quad\text{and}\quad  0< p < q<\infty,
\end{cases}
\label{0.2 1/2}
\end{equation}
where the number $q'$ is given by $\frac{1}{q'}=\max\left(1-\frac1q,0\right)$ and $v_p$ is given by $\frac{1}{v_p} = \frac1p-\frac1q$; cf. Corollary~\ref{coro-1a}. Actually, such a Corollary shows that, under mild assumptions on the $h$-set $\Gamma$, \eqref{0.2 1/2} gives the exact conditions on $\sigma$, $p$ and $q$ for which the trace spaces $\Bsg$ exist.
%

Our other main results can be found in Theorems~\ref{emb-BGamma-suff} and ~\ref{emb-BGamma-nec}, where we prove necessary and sufficient conditions for the embedding \eqref{i-0} to hold. Apart from some more technical assumptions, ensuring, in particular, the existence of the related trace spaces, the main requirement for the continuity of \eqref{i-0} reads as 
\begin{equation}
\left(\sigma_j^{-1} \tau_j h(2^{-j})^{-\max\left(\frac{1}{p_1}-\frac{1}{p_2},0\right)}\right)_j \in \ell_{q^\ast}.
\label{i-1}
\end{equation}
This condition clearly reflects the expected interplay between smoothness and regularity parameters of the spaces, as well as the underlying geometry of the $h$-set $\Gamma$ where the traces are taken. Moreover, we  can show that an assumption like \eqref{i-1} is also necessary for the embedding \eqref{i-0}, at least when $p_1\leq p_2$.

The paper is organised as follows. In Section~\ref{sect-1} we collect
some fundamentals about $h$-sets and admissible sequences. In
Section~\ref{sect-2} we start by recalling the definition of Besov spaces of generalised
smoothness and their wavelet and atomic characterisations. Then we describe the corresponding trace spaces in some detail and
present the results on growth envelopes that we shall use
afterwards. Our main embedding results are contained in
Section~\ref{sect-3}, first concerning spaces of generalised
smoothness on $\rn$, then for spaces defined on $\Gamma$. Throughout the paper we add remarks, discussions and examples to illustrate the (sometimes technically involved) arguments and results.

\section{Preliminaries}
\label{sect-1}
%
\subsection{General notation}
%
%

As usual, $\rn$ denotes the $n$-dimensional real Euclidean space,
$\nat$ the collection of all natural numbers and $\nat_0=\nat\cup
\{0\}$. We use the equivalence `$\sim$' in
$$
a_k \sim b_k \quad \mbox{or} \quad \varphi(x) \sim \psi(x)
$$
always to mean that there are two positive numbers $c_1$ and $c_2$
such that
$$
c_1\,a_k \leq b_k \leq c_2\, a_k \quad \mbox{or} \quad
c_1\,\varphi(x) \leq \psi(x) \leq c_2\,\varphi(x)
$$
for all admitted values of the discrete variable $k$ or the
continuous variable $x$, where $(a_k)_k$, $(b_k)_k$ are
non-negative sequences and $\varphi$, $\psi$ are non-negative
functions. 
Given two quasi-Banach spaces $X$ and $Y$, we write $X
\hookrightarrow Y$ if $X\subset Y$ and the natural embedding of
$X$ into $Y$ is continuous.\\
All unimportant positive constants will be denoted by $c$,
occasionally with additional subscripts within the same formula.
If not otherwise indicated, $\log$ is always taken with respect to
base 2. For some $\varkappa\in\real$ let 
\begin{equation*}
\varkappa_+ = \max(\varkappa , 0)\quad\mbox{and}\quad \whole{\varkappa} =
\max\{ k\in\mathbb{Z} : k\leq \varkappa\} \, .   
\label{a+}
\end{equation*}
Moreover, for $0<r\leq\infty$
the number $r'$ is given by $\, \frac{1}{r'}:=\left(1-\frac1r\right)_+ $ . 

For convenience,      
let both $\ \dint x\ $ and $\ |\cdot|\ $ stand for the
($n$-dimensional) Lebesgue measure in the sequel. 
For $x\in\rn$ and $r>0$, let $B(x,r)$ be the closed ball
\begin{equation*}
B(x,r) = \left\{y\in\rn : |y-x|\leq r\right\}.
\end{equation*} 

Let $\zn$ stand for the lattice of all points in $\rn$
with integer-valued components, $Q_{j m}$ denote a cube in
$\rn$ with sides parallel to the axes of coordinates, centred at
$2^{-j}m=(2^{-j}m_1,\dots,2^{-j}m_n)$, and with side length
$2^{-j}$, where $m\in \zn$ and $j \in \no$.
If $Q$ is a cube in $\rn$ and $r>0$, then $rQ$ is the cube in $\rn$
concentric with $Q$ and with side length $r$ times the side length
of $Q$. For $0<p<\infty$, $j \in \no$, and $m\in\zn$ we denote by $\chi_{j, m}^{(p)}$ 
the $p$-normalised characteristic function of  the cube $Q_{j, m}$,
\begin{align*}
 \chi_{j, m}^{(p)}(x)= 2^{\frac{j n}{p}} \ \chi_{j, m}(x)= 
\begin{cases}
     2^{\frac{j n}{p}} & \text{ for } \hspace{0.5cm} x\in Q_{j, m}, \\
     0 &  \text{ for }\hspace{0.5cm} x\notin Q_{j, m} ,\\
\end{cases}
\end{align*}
hence $\| \chi_{j, m}^{(p)}|L_p\|=1$. \\

Let $C(\rn)$ be the space of all complex-valued bounded uniformly
continuous functions on $\rn$, equipped with the $\sup$-norm as usual.

\subsection{$h$-sets $\Gamma$}

A central concept for us is the theory of so-called $h$-sets and corresponding
measures. We refer to \cite{Rogers} for a comprehensive treatment of this concept. 

\begin{definition}\quad~ \\[-3ex]
\bli
\item[{\upshape\bfseries (i)\hfill}]
Let $\HH$ denote the class of all positive continuous and non-decreasing
functions $h : (0,1]\to\real$ {\em (gauge functions)} with $\lim_{r\to 0} h(r)=0$. 
\item[{\upshape\bfseries (ii)\hfill}]
Let $h\in\HH$. A non-empty compact set $\Gamma\subset\rn$ is called
{\em $h$-set} if there exists a finite Radon measure $\mu$ with
\begin{align}
\supp\mu & =\Gamma,   \label{supp-h}   \\
\mu(B(\gamma,r))& \sim h(r),\qquad r\in(0,1],\ \gamma\in\Gamma.
\label{hset}
\end{align}
If for a given $h\in\HH$ there exists an $h$-set $\Gamma\subset\rn$, we call
    $h$ a {\em measure function (in~\rn)} and any related measure $\mu$
    with \eqref{supp-h} and~\eqref{hset} will be called {\em $h$-measure (related to $\Gamma$)}.
\eli
\label{defi-hset}
\end{definition}

Certainly one of the most prominent examples of these sets are
the famous $d$-sets, see also Example~\ref{ex-h-fnt} below, but it is also
well-known that in many cases more general approaches are necessary,
cf. \cite[p.~60]{mattila}. Here we essentially follow the presentation in
\cite{bricchi-diss,bricchi-fsdona,bricchi-3,bricchi-4}, see also
\cite{mattila} for basic notions and concepts. For convenience we quote
some results on $h$-sets and give examples afterwards; we refer to the
literature for a more detailed account on geometric properties of $h$-sets. \\

In view of (ii) the question arises which $h\in\HH$ are measure functions. The
complete characterisation is given in \cite{bricchi-fsdona}.

\begin{proposition}     \label{h-measfnt}
Let $h\in\HH$.
There is a compact set $\Gamma$ and a Radon measure $\mu$ with \eqref{supp-h}
and \eqref{hset}
if, and only if, there are constants $0<c_1\leq c_2<\infty$ and a function $\tilde{h} \in \HH$ such that
\[
c_1 \tilde{h}(t) \leq h(t) \leq c_2 \tilde{h}(t),\quad t\in (0,1],
\]
and
\begin{equation*}    \label{main2a}
\tilde{h} (2^{-j}) \leq 2^{kn} \tilde{h}(2^{-k-j}), \qquad \text{for all}\quad j,k\in\no.
\end{equation*}
\end{proposition}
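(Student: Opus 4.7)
For the necessity direction, assume $\Gamma$ is an $h$-set with associated Radon measure $\mu$, fix $\gamma\in\Gamma$ and $j,k\in\no$, and take a maximal $2^{-j-k}$-separated subset $\{\gamma_i\}_{i=1}^N$ of $B(\gamma,2^{-j})\cap\Gamma$. By maximality the balls $B(\gamma_i,2^{-j-k})$ cover $B(\gamma,2^{-j})\cap\Gamma$; on the other hand the smaller balls $B(\gamma_i,2^{-j-k-1})$ are pairwise disjoint and contained in $B(\gamma,2^{1-j})$, so a Lebesgue-volume comparison in $\rn$ gives $N\leq 4^n\cdot 2^{kn}$. Combining this with \eqref{hset} applied to both scales yields the dyadic inequality
\[
h(2^{-j})\ \leq\ C\cdot 2^{kn}\, h(2^{-j-k}),\qquad j,k\in\no,
\]
with $C$ independent of the indices.

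To upgrade this to the constant-free inequality demanded for $\tilde h$, I would set
\[
\tilde h(2^{-j}):=\inf_{l\geq 0}\,2^{ln}\,h(2^{-j-l}),\qquad j\in\no,
\]
and extend $\tilde h$ continuously to $(0,1]$ by a monotone log-linear interpolation on each dyadic interval $[2^{-j-1},2^{-j}]$. Taking $l=0$ yields $\tilde h\leq h$ at dyadic points, while the previous display shows $\tilde h(2^{-j})\geq C^{-1}h(2^{-j})$; the dyadic inequality $\tilde h(2^{-j})\leq 2^{kn}\tilde h(2^{-j-k})$ follows at once by reindexing the infimum. Monotonicity of $\tilde h$ on dyadic points (obtained by separating the $l=0$ term and using that $h$ is non-decreasing), positivity, and $\tilde h(r)\to 0$ as $r\to 0^+$ are inherited from $h$ and preserved by the interpolation, so $\tilde h\in\HH$.

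For the sufficiency direction I would build an $h$-set from $\tilde h$ by a Cantor-type construction. Start from a unit cube $Q^0\subset\rn$, and pass from level $j-1$ to level $j$ by selecting, inside each retained cube of side $2^{1-j}$, a suitable number $M_j\leq 2^n$ of the $2^n$ available sub-cubes of side $2^{-j}$; the hypothesis $\tilde h(2^{1-j})\leq 2^n\tilde h(2^{-j})$ makes this feasible, with $M_j$ chosen so that the total count $N_j=\prod_{i=1}^j M_i$ stays comparable to $1/\tilde h(2^{-j})$. Let $\Gamma$ be the intersection of the resulting nested unions (a compact set) and let $\mu$ be the weak-$*$ limit of the uniform probability measures on the level-$j$ unions; by design $\supp\mu=\Gamma$ and each selected level-$j$ cube carries $\mu$-mass comparable to $\tilde h(2^{-j})$. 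Choosing $j$ with $2^{-j}\sim r$ and counting the selected cubes meeting $B(\gamma,r)$, together with the inclusion of the unique level-$j'$ cube containing $\gamma$ with $\sqrt{n}\,2^{-j'}\leq r$, then yields $\mu(B(\gamma,r))\sim\tilde h(r)\sim h(r)$, uniformly in $\gamma\in\Gamma$ and $r\in(0,1]$.

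The main obstacle is the discrete book-keeping in the sufficiency construction: one must arrange the integer-valued choices $M_j\leq 2^n$ and the geometric placement of the chosen sub-cubes so that both $N_j\sim 1/\tilde h(2^{-j})$ and the uniform lower bound for $\mu(B(\gamma,r))$ hold at every scale simultaneously. The doubling-type hypothesis on $\tilde h$ is precisely what keeps these two constraints compatible throughout the iteration, and it is the only point in the proof where the full strength of the stated constant-free inequality (as opposed to its version for $h$, which carries a loss $C$) is actually used.
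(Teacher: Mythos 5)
The paper gives no proof of this proposition at all: it is quoted verbatim from Bricchi's work, with the sentence ``The complete characterisation is given in \cite{bricchi-fsdona}'' standing in for an argument. So your proposal can only be measured against the standard proof in that reference, and in fact it follows essentially the same route. Your necessity half is complete and correct: the maximal $2^{-j-k}$-separated set, the volume comparison giving $N\leq 4^n2^{kn}$, and the two applications of \eqref{hset} do yield $h(2^{-j})\leq C\,2^{kn}h(2^{-j-k})$; and the regularisation $\tilde h(2^{-j}):=\inf_{l\geq 0}2^{ln}h(2^{-j-l})$ is exactly the right device --- the reindexing gives the constant-free inequality, $l=0$ gives $\tilde h\leq h$, the dyadic inequality for $h$ gives $\tilde h\geq C^{-1}h$ (hence positivity), and termwise comparison using that $h$ is non-decreasing gives monotonicity (you do not even need to separate the $l=0$ term). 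The only detail you elide is that the equivalence $\tilde h\sim h$ transfers from dyadic points to all of $(0,1]$, which needs $h(2^{-j})\lesssim h(2^{-j-1})$; this is the $k=1$ case of the inequality you just proved, so it is harmless.

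The sufficiency half is the right construction (a Cantor-type ``pseudo self-similar'' set, as in Bricchi), but it remains a sketch precisely at the point you flag. Two remarks on closing it. First, the integer bookkeeping does work and is not deep: normalising $\tilde h(1)=1$ and setting greedily $M_j=\max\bigl(1,\lceil a_j/N_{j-1}\rceil\bigr)$ with $a_j=1/\tilde h(2^{-j})$, one checks by induction that $N_j\in[a_j,2a_j)$, using only $1\leq a_j/a_{j-1}\leq 2^n$, which is the $k=1$ instance of the hypothesis on $\tilde h$; so the full strength of the inequality enters the construction only through $k=1$, while general $k$ is what you need later for the lower bound $\mu(B(\gamma,r))\gtrsim\tilde h(r)$ when passing from the scale $2^{-j}\sim r$ to the slightly finer level $j'$ with $\sqrt n\,2^{-j'}\leq r$. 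Second, the ``geometric placement'' of the selected sub-cubes is actually immaterial: since all selected level-$j$ cubes are dyadic cubes of side $2^{-j}$ each carrying mass exactly $N_j^{-1}$, a ball of radius $r\sim 2^{-j}$ meets at most $c(n)$ of them, which gives the upper bound for free, and the lower bound only uses the single cube containing $\gamma$. If you spell out these two points (greedy choice of $M_j$, and the two-sided estimate for the weak-$*$ limit measure), the proof is complete.
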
                                                                       %

\begin{remark}\label{rem-doubling}
As a consequence of the above proposition we have that for a measure function $h\in\HH$ there is some $c>0$ such that for all $j,k\in\no$,
\begin{equation}    \label{main2}
\frac{h(2^{-k-j})}{h(2^{-j})}\ \geq \ c\ 2^{-kn}.
\end{equation}
Note that every $h$-set $\Gamma$ satisfies the {\em doubling condition}, i.e., 
there is some $c>0$ such that
\begin{equation}
\mu(B(\gamma,2r))  \ \leq \ c\ \mu(B(\gamma,r)),\qquad r\in(0,1],\ \gamma\in\Gamma.
\label{doubling}
\end{equation}
Obviously one can regard \eqref{main2} as a refined version of \eqref{doubling}
for the function~$h$, in which the 
dimension~$n$ of the underlying space \rn~ is taken into account (as expected). 
\end{remark}

\begin{proposition} \label{T:h-sets}
Let $\Gamma$ be  an $h$-set in \rn. 
All $h$-measures
$\mu$ related to $\Gamma$ are equivalent to ${\mathcal H}^h|\Gamma$,
where the latter stands for the restriction to~$\Gamma$ of the generalised
Hausdorff measure with respect to the gauge function~$h$.
\end{proposition}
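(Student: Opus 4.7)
The plan is to prove directly that $\mu \sim \mathcal{H}^h|\Gamma$ for any single $h$-measure $\mu$ related to $\Gamma$; the pairwise equivalence of all $h$-measures will then follow by transitivity. The two halves of this equivalence will come from covering arguments that both rely on the doubling behaviour of $h$ recorded in Remark~\ref{rem-doubling}: in particular, $h(2r) \leq c_0\, h(r)$ for $r\in(0,1/2]$, which follows from $h$ being non-decreasing together with \eqref{hset} applied to concentric balls of radii $r$ and $2r$ (both centred at some $\gamma\in\Gamma$).

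For the upper bound $\mu(E) \leq c\, \mathcal{H}^h(E)$ on a Borel set $E \subset \Gamma$, I would fix a scale $\delta\in(0,1]$ and an arbitrary countable cover $\{B(x_i, r_i)\}_i$ of $E$ with $r_i \leq \delta$. Discarding balls not meeting $\Gamma$ and, for each remaining one, choosing $\gamma_i \in B(x_i, r_i) \cap \Gamma$, one has $B(x_i, r_i) \cap \Gamma \subset B(\gamma_i, 2r_i)$, so by \eqref{hset} and the doubling of $h$,
$$
\mu(B(x_i, r_i)) \leq \mu(B(\gamma_i, 2r_i)) \leq c\, h(r_i).
$$
Summing and then taking the infimum over all such covers gives $\mu(E) \leq c\, \mathcal{H}^h_\delta(E)$, where $\mathcal{H}^h_\delta$ denotes the pre-Hausdorff measure at scale $\delta$; passing $\delta\to 0$ concludes this half.

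For the lower bound $\mathcal{H}^h(E) \leq c'\, \mu(E)$, I would invoke outer regularity of the finite Radon measure $\mu$: given $\varepsilon > 0$, fix an open $U \supset E$ with $\mu(U) \leq \mu(E) + \varepsilon$. For each $\gamma \in E$, every sufficiently small ball $B(\gamma, r)$ lies in $U$. Applying the classical $5r$-covering lemma in $\rn$ to the family of all such balls of radii at most $\delta$ produces a disjoint subfamily $\{B(\gamma_i, r_i)\}_i$ with $\gamma_i \in E$ and $E \subset \bigcup_i B(\gamma_i, 5r_i)$; iterating the doubling of $h$ and using \eqref{hset} one obtains
$$
\sum_i h(5r_i) \leq c\, \sum_i h(r_i) \leq c'\, \sum_i \mu(B(\gamma_i, r_i)) \leq c'\, \mu(U) \leq c'(\mu(E) + \varepsilon).
$$
Hence $\mathcal{H}^h_{5\delta}(E) \leq c'(\mu(E) + \varepsilon)$; letting $\delta \to 0$ and then $\varepsilon \to 0$ closes the argument.

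The main technical obstacle I expect is the bookkeeping needed to shift ball centres onto $\Gamma$ so that \eqref{hset} applies directly, and keeping track of the accumulated doubling constants so that they remain uniform in the scale. Everything else reduces to the standard $5r$-covering lemma in $\rn$ and outer regularity of $\mu$, both of which hold without additional assumptions beyond the existence of the $h$-measure.
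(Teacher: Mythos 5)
The paper does not actually prove this proposition: Remark~\ref{rem-dim} simply refers to \cite[Thm.~1.7.6]{bricchi-diss}. Your argument is the standard covering-theoretic proof of such comparison theorems (mass-distribution principle for $\mu\lesssim\mathcal{H}^h|\Gamma$, Vitali $5r$-covering lemma plus outer regularity of the finite Radon measure $\mu$ for the converse), which is also the route taken in the cited source and in the classical $d$-set case; the architecture and all the estimates are sound, and reducing to pairwise equivalence via a fixed reference measure $\mathcal{H}^h|\Gamma$ is exactly right.

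One sub-justification is wrong, though, and you should repair it. You claim that the doubling inequality $h(2r)\leq c_0\,h(r)$ ``follows from $h$ being non-decreasing together with \eqref{hset} applied to concentric balls of radii $r$ and $2r$''. It does not: monotonicity of $\mu$ on concentric balls combined with \eqref{hset} yields only $h(r)\lesssim h(2r)$, which is the trivial direction. The inequality you actually need, $h(2r)\lesssim h(r)$, is a genuinely nontrivial property of measure functions; in this paper it is available as \eqref{main2} in Remark~\ref{rem-doubling} (take $k=1$ there and pass to non-dyadic radii using monotonicity of $h$), and it ultimately rests on Proposition~\ref{h-measfnt}, i.e.\ on covering $B(\gamma,2r)\cap\Gamma$ by boundedly many balls of radius $r$ centred in $\Gamma$. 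With the doubling inequality sourced correctly, the rest of your proof goes through: in the upper bound it converts $h(2r_i)$ (or $h(\mathrm{diam}\,U_i)$, depending on which definition of $\mathcal{H}^h$ one adopts) into $h(r_i)$, and in the lower bound it absorbs the dilation factor coming from the enlarged balls. The remaining points you flag --- recentring balls on $\Gamma$, the equivalence of the spherical and ordinary $h$-Hausdorff measures, and keeping $\delta$ small enough that all dilated radii stay in $(0,1]$ --- are indeed only bookkeeping.
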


\begin{remark}\label{rem-dim}
A proof of this result is given in \cite[Thm.~1.7.6]{bricchi-diss}. 
Concerning the theory of generalised 
Hausdorff measures ${\mathcal H}^h$ we refer to \cite[Ch.~2]{Rogers} and
\cite[p.~60]{mattila}; in particular, 
if $h(r)=r^d$, then ${\mathcal H}^h$ coincides 
with the usual $d$-dimensional Hausdorff measure.
\end{remark}

\begin{example}\label{ex-h-fnt}
We restrict ourselves to a few examples only and refer to
\cite[Ex.~3.8]{bricchi-3} for further results. All functions
are defined for $r\in (0,\varepsilon)$, suitably extended on $(0,1]$ afterwards.\\
Let $\Psi$ be a continuous {\em admissible} function or a continuous {\em
  slowly varying} function, respectively. An
{\em admissible} function $\Psi$ in the sense of \cite{ET6}, \cite{moura-diss} is a
positive monotone function on $(0,1]$ such that $\Psi\left(2^{-2j}\right) \sim
\Psi\left(2^{-j}\right)$, $j\in\nat$. A positive and measurable function
  $\Psi$ defined on the interval $(0,1]$ is said to be {\em slowly varying}
  (in Karamata's sense) if
\begin{equation*} \label{a.f.equiv}
\lim_{t\rightarrow 0}\frac{\Psi(s t)}{\Psi(t)}=1,\quad s\in(0,1].
\end{equation*}
For such functions it is known, for instance, that for each $\varepsilon>0$ there is a
decreasing function $\phi$ and an increasing function $\varphi$
with $t^{-\varepsilon}\,\Psi(t)\sim \phi(t)$, and
$t^{\varepsilon}\,\Psi(t)\sim \varphi(t)$; 
we refer to the monograph \cite{BGT} for details and further properties; see also
\cite[Ch.~V]{zygmund}, \cite{EKP}, and \cite{neves-01,neves-02}.
In particular,
\begin{equation}
\Psi_b(x)=\left(1+|\log x|\right)^b, \quad x\in (0,1],\quad
b\in\real, \label{psi-ex-log}
\end{equation}
may be considered as a prototype both for an admissible function and a slowly
varying function. \\
Let $0<d<n$. Then 
\begin{equation}
\label{hPsi}
h(r)=r^d\ \Psi(r),\quad r\in (0,1],
\end{equation}
is a typical example of $h\in\HH$. The limiting cases $d=0$ and $d=n$ can be
included, assuming additional properties of $\Psi$ in view of  \eqref{main2}
and $h(r)\to 0$ for $r\to 0$, e.g.
\beq
h(r)=(1+|\log r|)^b, \quad b<0,\quad r\in (0,1],\label{h-log-ex}
\eeq
referring to \eqref{psi-ex-log}. Such functions $h$ given by \eqref{hPsi} are related
to so-called $(d,\Psi)$-sets studied in \cite{ET6}, \cite{moura-diss}, whereas
the special setting $\Psi\equiv 1$ leads to 
\begin{equation}
\label{h-d}
h(r)=r^d,\quad r\in (0,1],\quad 0< d< n,
\end{equation}
connected with the famous $d$-sets. Apart from \eqref{hPsi} also functions of
type $h(r)=\exp\left(b |\log r|^\varkappa\right)$, $b<0$,
$0<\varkappa<1$, are admitted, for example.
\end{example}

We shall need another feature of some $h$-sets, namely the so-called `porosity'
condition, see also \cite[p.~156]{mattila} and
\cite[Sects.~9.16-9.19]{T-func}.

\begin{definition}
A Borel set $\Gamma\neq\emptyset$ satisfies the {\em porosity condition} if
there exists a number $0<\eta<1$ such that for any ball $B(\gamma,r)$
centred at $\gamma\in\Gamma$ and with radius $0<r\leq 1$ there is a ball
$B(x,\eta r)$ centred at some $x\in\rn$ satisfying
\begin{equation}
B(\gamma,r) \ \supset \ B(x,\eta r), \quad B(x,\eta r) \ \cap \
\overline{\Gamma} \ = \ \emptyset.
\label{ball-1}
\end{equation}
\end{definition}

Replacing, if necessary, $\eta$ by $\frac{\eta}{2}$, we can complement \eqref{ball-1} by
\begin{equation*}
\mathrm{dist}\ \left(B(x,\eta r), \overline{\Gamma}\right) \ \geq \ \eta r,
\quad 0<r\leq 1.
\label{ball-2}
\end{equation*}

This definition coincides with \cite[Def.~18.10]{T-Frac}. There is a complete
characterisation of measure functions $h$ such that the corresponding
$h$-sets $\Gamma$ satisfy the porosity condition; this can be found in
\cite[Prop.~9.18]{T-func}. We recall it for convenience.

\begin{proposition}
Let $\Gamma\subset\rn$ be an $h$-set. Then $\Gamma$ satisfies the porosity
condition if, and only if, there exist constants $c>0$ and $\varepsilon>0$
such that 
\begin{equation}
\frac{h\left(2^{-j-k}\right)}{h\left(2^{-j}\right)} \ \geq \ c\
2^{-(n-\varepsilon)k}, \quad j,k\in\no.
\label{ball-3}
\end{equation}
\label{crit-poros}
\end{proposition}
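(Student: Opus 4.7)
The statement is a biconditional, so the plan is to establish the two implications separately, both exploiting the mass formula $\mu(B(\gamma,r))\sim h(r)$ that defines an $h$-set.

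For the direction \emph{porosity $\Rightarrow$ \eqref{ball-3}}, the strategy is a covering/counting argument. Fix $\gamma\in\Gamma$ and $j,k\in\no$, and let $N(j,k)$ denote the minimal number of balls of radius $2^{-j-k}$, centred at points of $\Gamma$, needed to cover $\Gamma\cap B(\gamma,2^{-j})$. The $h$-set property together with the subadditivity of $\mu$ gives
\[
h(2^{-j})\ \sim\ \mu\bigl(B(\gamma,2^{-j})\bigr)\ \leq\ \sum_{i=1}^{N(j,k)}\mu\bigl(B(\gamma_i,2^{-j-k})\bigr)\ \sim\ N(j,k)\,h(2^{-j-k}),
\]
so \eqref{ball-3} will follow once we prove $N(j,k)\leq C\,2^{(n-\varepsilon)k}$. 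This is where I expect the main technical work to lie: one fixes an integer $\ell$ with $2^{-\ell}\leq\eta/\sqrt{n}$, so that every ball $B(x,\eta\rho)$ from the porosity condition contains a dyadic cube of side length $2^{-\ell}\rho$. Iterating porosity in $k/\ell$ stages and comparing with dyadic subdivisions, in each refinement step from scale $2^{-j-i\ell}$ to scale $2^{-j-(i+1)\ell}$ at least one out of the $2^{n\ell}$ dyadic sub-cubes is entirely free of $\Gamma$. Hence $N(j,k)\leq (2^{n\ell}-1)^{k/\ell}=2^{nk}(1-2^{-n\ell})^{k/\ell}$, which is bounded by $C\,2^{(n-\varepsilon)k}$ with $\varepsilon:=-\ell^{-1}\log_2(1-2^{-n\ell})>0$.

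For the converse, I would argue by contradiction. If $\Gamma$ fails the porosity condition, then for every $\eta\in(0,1)$ there exist $\gamma\in\Gamma$ and $r\in(0,1]$ such that $\Gamma$ is $\eta r$-dense in $B(\gamma,r)$, in the sense that every ball of radius $\eta r$ inside $B(\gamma,r)$ meets $\Gamma$. Tiling $B(\gamma,r)$ by cubes of side $\sim\eta r$ and picking one point of $\Gamma$ in each cube yields a maximal $c\eta r$-separated family $\{\gamma_i\}_{i=1}^{M}$ in $\Gamma\cap B(\gamma,r)$ with $M\gtrsim\eta^{-n}$. The balls $B(\gamma_i,c\eta r/2)$ are pairwise disjoint and contained in $B(\gamma,2r)$, so the $h$-set property and the doubling condition give
\[
\eta^{-n}\,h(\eta r)\ \lesssim\ \sum_{i=1}^{M}\mu\bigl(B(\gamma_i,c\eta r/2)\bigr)\ \leq\ \mu\bigl(B(\gamma,2r)\bigr)\ \sim\ h(r).
\]
Choosing $\eta=2^{-\ell}$ and $r=2^{-j}$ this reads $h(2^{-j-\ell})/h(2^{-j})\leq C\,2^{-n\ell}$, which combined with the assumed lower bound $h(2^{-j-\ell})/h(2^{-j})\geq c\,2^{-(n-\varepsilon)\ell}$ yields $2^{\varepsilon\ell}\leq C/c$. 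This is impossible for $\ell$ sufficiently large, so porosity must hold with, say, $\eta=2^{-\ell_0}$ for any $\ell_0$ exceeding this threshold.

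The genuine obstacle is the iterated-porosity counting in the first direction: the porosity balls $B(x,\eta r)$ need not align with the dyadic grid, so one must pass through the auxiliary parameter $\ell$ and carefully track which dyadic sub-cube is eliminated at each stage. The second direction, by contrast, is essentially a volume/mass computation once one passes to a maximal separated family, and the choice of $\eta$ at the very end is what converts the two competing inequalities into a contradiction.
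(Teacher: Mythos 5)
The paper does not actually prove Proposition~\ref{crit-poros}: it quotes the result from Triebel's \emph{The structure of functions} (Prop.~9.18), so there is no in-paper argument to compare against, and your proposal has to be judged on its own merits. Your overall covering/packing strategy is the standard one, and the direction \eqref{ball-3} $\Rightarrow$ porosity is correct as you describe it: for a ball $B(\gamma,r)$ without an $\eta r$-hole, the packing of $\gtrsim\eta^{-n}$ disjoint balls of radius $\sim\eta r$ centred in $\Gamma$ gives $\eta^{-n}h(\eta r)\lesssim h(r)$, and played against the assumed lower bound this forces $2^{\varepsilon\ell}\leq C/c$, hence porosity with $\eta=2^{-\ell}$ for any sufficiently large $\ell$; the rounding of $r$ to a dyadic scale and the passage to a sparse sub-lattice of cubes to make the balls disjoint are routine.

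The gap is in the counting step of the forward direction. You assert that in passing from generation $i$ to generation $i+1$ ``at least one out of the $2^{n\ell}$ dyadic sub-cubes is entirely free of $\Gamma$'' and conclude $N(j,k)\le(2^{n\ell}-1)^{k/\ell}$. That recursion requires each parent cube $Q$ meeting $\Gamma$ to lose one of \emph{its own} children. But porosity applied at a point $\gamma_Q\in\Gamma\cap Q$ with radius $\rho=2^{-j-i\ell}$ produces a hole $B(x,\eta\rho)$ somewhere in $B(\gamma_Q,\rho)$, which is \emph{not} contained in $Q$; the $\Gamma$-free dyadic subcube you extract may be a child of a neighbouring cube, and several parents may point to the same free subcube. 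So the per-parent bookkeeping, as stated, does not follow; this is more than a grid-alignment issue, which is the only obstacle you flag. The repair is a bounded-overlap count: every $\Gamma$-free child produced in this way lies in the union of its producing parent and that parent's $3^n-1$ neighbours, so each free child can be attributed to at most $3^n$ parents, and the number of \emph{distinct} free children of generation $i+1$ is at least $M_i/3^n$, where $M_i$ is the number of generation-$i$ cubes meeting $\Gamma\cap B(\gamma,2^{-j})$. This yields $M_{i+1}\le\left(2^{n\ell}-3^{-n}\right)M_i$ and hence $N(j,k)\lesssim 2^{(n-\varepsilon)k}$ with $\varepsilon=-\ell^{-1}\log_2\left(1-3^{-n}2^{-n\ell}\right)>0$, which is all that is needed. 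With that correction (and the usual $\lfloor k/\ell\rfloor$ and ball-versus-cube constants) your proof closes.
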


Note that an $h$-set $\Gamma$ satisfying the porosity condition has Lebesgue
measure $|\Gamma|=0$, but the converse is not true. This can be seen from
\eqref{ball-3} and the result \cite[Prop.~1.153]{T-F3}, 
\begin{equation*}
|\Gamma| = 0 \quad\mbox{if, and only if,}\quad \lim_{r\to 0} \ \frac{r^n}{h(r)}
 = 0
\end{equation*}
for all $h$-sets $\Gamma$.

\begin{remark}\label{example-ball}
In view of our above examples and \eqref{ball-3} it is obvious that $h$ from
\eqref{hPsi} and \eqref{h-d} with $d=n$ does not satisfy the porosity condition, unlike in
case of $d<n$.
\end{remark}

\begin{remark}\label{strongly-isotropic}
Later it will turn out that some additional property of $h$ is desirable; it is in some
sense `converse' to the porosity criterion \eqref{ball-3}: {Triebel} calls the measure $\mu$ corresponding to $h$ and $\Gamma$ with $|\Gamma|=0$ {\em strongly
  isotropic} if $h\in\HH$ is strictly increasing (with $h(1)=1$) and
there exists some $k\in\nat$ such that, for all $j\in\no$,
\beq
2 h\left(2^{-j-k}\right) \leq \ h\left(2^{-j}\right).
\label{cond-strong-iso}
\eeq
It is known that $\mu$ is strongly isotropic if, and only if,
\begin{equation}
\sum_{j=l}^\infty h\left(2^{-j}\right) \sim h\left(2^{-l}\right)
\label{s-i_1}
\end{equation}
for all $l\in\no$, and this is equivalent to 
\begin{equation}
\sum_{j=0}^m h\left(2^{-j}\right)^{-1} \sim h\left(2^{-m}\right)^{-1}
\label{s-i_2}
\end{equation}
for $m\in\no$, see \cite[Def.~7.18, Prop.~7.20]{T-F3}. Plainly, $h(r)=r^d$,
$0<d<n$, satisfies this condition, unlike, e.g., $h(r)=(1+|\log r|)^b$,
$b<0$, from \eqref{h-log-ex}.
\end{remark}

\subsection{Admissible sequences and regularity indices}
\label{adm_seq}

We collect some basic concepts and notions for later use when we
introduce function spaces of generalised smoothness and their trace
spaces on an $h$-set $\Gamma$.

\begin{definition}\label{defi-sigma-adm}
A sequence $\bm{\sigma} = (\sigma_j)_{j\in\no}$ of positive numbers is called {\em
  admissible} if there are two positive constants $\ d_0$, $d_1$ such that
\begin{equation}
d_0 \ \sigma_j \ \leq \ \sigma_{j+1}\ \leq \ d_1\ \sigma_j, \quad j\in\no.
\label{adm-seq}
\end{equation}
\end{definition}
\begin{remark}\label{R-adm-1}
If $\bm{\sigma}$ and $\bm{\tau}$ are admissible sequences, then $\bm{\sigma
  \tau} := \left( \sigma_j \tau_j\right)_j\ $ and $\bm{\sigma}^r :=
\left(\sigma_j^r\right)_j\ $, $r\in\real$, are admissible, too. For convenience, let us further introduce the notation 
\begin{equation*}
\paren{a} := \left(2^{ja}\right)_{j\in\no}\quad \mbox{where}\quad a\in\real.
\end{equation*}
Obviously, for $a,b\in\real$, $r>0$, and $\bm{\sigma}$ admissible,
$\paren{a} \paren{b} = \paren{a+b}$,
$\paren{\textstyle\frac{a}{r}}=\paren{a}^{1/r}$, and $\paren{a}\bm{\sigma}
= \left(2^{ja}\sigma_j \right)_{j\in\no}$.
\end{remark}

\begin{example}\label{ex-sigma}
We restrict ourselves to the sequence $ \bm{\sigma} =
\left(2^{js} \Psi\left(2^{-j}\right)\right)_j $, $ s\in\real$, $\Psi $ an
admissible function in the sense of Example~\ref{ex-h-fnt} above. This includes,
in particular, $ \bm{\sigma} = \paren{s}$, $ s\in\real$. We refer to
\cite{F-L} for a more general approach and further examples.
\end{example}

We introduce some `regularity' indices for $\bm{\sigma}$. 

\begin{definition}
Let $\bm{\sigma}$ be an admissible sequence, and
\begin{equation}
\label{s_sigma}
\lowind{\sigma} := \liminf_{j\to\infty} \
\log\left(\frac{\sigma_{j+1}}{\sigma_j}\right) 
\end{equation}
and
\begin{equation}
\label{s^sigma}
\upind{\sigma} := \limsup_{j\to\infty} \
\log\left(\frac{\sigma_{j+1}}{\sigma_j}\right). 
\end{equation}
\end{definition}

\begin{remark}\label{R-Boyd}
These indices were introduced and used in \cite{bricchi-diss}. For admissible sequences $\bm{\sigma}$
according to \eqref{adm-seq} we have $\ \log d_0 \leq 
 \lowind{\sigma} \leq \upind{\sigma} \leq \log d_1$. One easily
verifies that
\begin{equation}
 \upind{\sigma} = \lowind{\sigma} = s\quad \mbox{in case of}\quad  \bm{\sigma} =
 \left(2^{js} \Psi\left(2^{-j}\right)\right)_j 
\label{index-ex}
\end{equation}
for all admissible functions $\Psi $ and $s\in\real$. In contrast 
to this one can find examples in \cite{F-L}, due to
{Kalyabin}, showing that an admissible sequence has not
necessarily a fixed main order. Moreover, it is known that 
for any $0<a\leq b<\infty$ there is
an admissible sequence $\bm{\sigma}$ with
$\lowind{\sigma} =a$ and $\upind{\sigma} =b$,
that is, with prescribed upper and lower indices.

It is more or less obvious from the definitions
\eqref{s_sigma}, \eqref{s^sigma} that for admissible sequences
$\bm{\sigma}, \bm{\tau}$ one obtains  $\lowind{\sigma} = -
\sol\left(\bm{\sigma}^{-1}\right)$, $\sul\left(\bm{\sigma}^r\right) = r\lowind{\sigma}$, $r \geq 0$, 
and
$\upind{\sigma \tau} \leq \upind{\sigma} + \upind{\tau}$,
$\lowind{\sigma \tau} \geq \lowind{\sigma} + \lowind{\tau}$.
In particular, for $\bm{\sigma} = \paren{a}$, $a\in\real$, this can be sharpened by
$\sol\left(\bm{\tau} \paren{a}\right)   = a + \upind{\tau}$, $\sul\left(\bm{\tau} \paren{a}\right) = a + \lowind{\tau}$. 
Observe that,
given $\varepsilon>0$, there are two positive constants
$c_1=c_1(\varepsilon)$ and $c_2=c_2(\varepsilon)$ such that
\begin{equation} \label{estimate}
c_1 \ 2^{(\lowind{\sigma}-\varepsilon)j} \leq \sigma_j \leq c_2 \
2^{(\upind{\sigma}+\varepsilon)j},\qquad j\in\nat_0.
\end{equation}
Plainly this implies that whenever $\lowind{\sigma} >0$, then
$\bm{\sigma}^{-1}$ belongs to any space $\ell_u$, $0<u\leq\infty$, whereas
$\upind{\sigma}<0 $ leads to $\bm{\sigma}^{-1}\not\in\ell_\infty$.
\end{remark}

\begin{remark}\label{R-boyd-2}
Note that in some later papers, cf. \cite{bricchi-4}, instead of \eqref{s_sigma} and
  \eqref{s^sigma} the so-called {\em upper} and {\em lower Boyd
  indices of} $\bm{\sigma}$ are considered, given by 
\begin{equation*}
\alpha_{\bm{\sigma}} = \lim_{j\to\infty} \ \frac1j \ 
\log \left(\sup_{k\in\no} \ \frac{\sigma_{j+k}}{\sigma_k}\right) \ = \ \inf_{j\in\nat} \ \frac1j \ 
\log \left(\sup_{k\in\no} \ \frac{\sigma_{j+k}}{\sigma_k}\right)
\end{equation*}
and
\begin{equation*}
\beta_{\bm{\sigma}} = \lim_{j\to\infty} \ \frac1j \ 
\log \left(\inf_{k\in\no} \ \frac{\sigma_{j+k}}{\sigma_k}\right) \ = \ \sup_{j\in\nat} \ \frac1j \ 
\log \left(\inf_{k\in\no} \ \frac{\sigma_{j+k}}{\sigma_k}\right)
\end{equation*}
respectively. In general we have 
$$
\lowind{\sigma}  \leq \beta_{\bm{\sigma}} \leq  \alpha_{\bm{\sigma}}\leq
 \upind{\sigma},
$$
but one can construct admissible sequences with $\lowind{\sigma}  <
\beta_{\bm{\sigma}}$ and $\alpha_{\bm{\sigma}} <  \upind{\sigma}$. 
\end{remark}

\begin{remark}\label{subseq-adm}
We briefly mention some convenient feature of admissible sequences that will be used later several times. Assume that $\bm{\gamma}$ is some admissible sequence with the property that for some (fixed) $\iota_0\in\nat$ and some $u\in (0,\infty]$ the special subsequence $(\gamma_{k\iota_0})_k\subset\bm{\gamma}$ 
satisfies $(\gamma_{k\iota_0})_k \in \ell_u$. This already implies $\bm{\gamma}\in\ell_u$, since, for arbitrary $m\in\nat$,
\[
\sum_{j=1}^{m\iota_0} \gamma_j^u = \sum_{k=1}^m \gamma_{k\iota_0}^u + \sum_{l=1}^{\iota_0-1} \sum_{k=0}^{m-1} \gamma_{l+k\iota_0}^u \sim \sum_{k=1}^m \gamma_{k\iota_0}^u
\]
with constants depending on $\bm{\gamma}$, $u$ and $\iota_0$, but not on $m\in\nat$. Thus $\bm{\gamma}\in\ell_u$ if, and only if, $(\gamma_{k\iota_0})_k \in \ell_u$. The modifications for $u=\infty$ are obvious. 
\end{remark}

We shall need the following short lemma several times below and insert
it here. It mainly relies on an inequality of Landau, see
\cite[Thm.~161, p.~120]{hardy}, see also \cite[Prop.~4.1]{Ca-L-Lloc}.

\begin{lemma}
\label{lemma-Landau}
Let $\bm{\alpha} = (\alpha_k)_{k\in\no} $, $\bm{\beta} =
(\beta_k)_{k\in\no}$ be sequences of non-negative numbers, let
$0<q_1,q_2\leq\infty$, and $q^\ast$ be given by 
\begin{equation}
\frac{1}{q^\ast} \ := \ \left(\frac{1}{q_2} - \frac{1}{q_1}\right)_+\ . 
\label{q-ast}
\end{equation}
Then
\[
\bm{\alpha} \in \ell_{q^\ast} \qquad\text{if, and only if,}\qquad \bm{\alpha} \bm{\beta} \in \ell_{q_2} \quad \text{whenever}\quad \bm{\beta}\in\ell_{q_1}.
\]
\end{lemma}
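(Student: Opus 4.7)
My plan is to split the argument by whether $q^\ast$ is finite or infinite, i.e., whether $q_2\le q_1$ (so $1/q^\ast=1/q_2-1/q_1\ge 0$) or $q_2>q_1$ (so $q^\ast=\infty$). In both cases the sufficiency follows from a classical inequality applied to the pointwise product, while the necessity will be handled by contraposition, ultimately reducing to Landau's inequality \cite[Thm.~161]{hardy} as the paper itself hints.

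For sufficiency, if $q^\ast<\infty$ I would use the identity $\frac{1}{q_2}=\frac{1}{q^\ast}+\frac{1}{q_1}$ together with H\"older's inequality with exponents $q^\ast/q_2$ and $q_1/q_2$, obtaining
\[
\|\bm{\alpha}\bm{\beta}\,|\,\ell_{q_2}\|\ \leq\ \|\bm{\alpha}\,|\,\ell_{q^\ast}\|\cdot\|\bm{\beta}\,|\,\ell_{q_1}\|.
\]
If instead $q^\ast=\infty$ then $q_1\le q_2$ and $\bm{\alpha}\in\ell_\infty$, so $|\alpha_k\beta_k|\le\|\bm{\alpha}\,|\,\ell_\infty\|\,|\beta_k|$ and hence $\bm{\alpha}\bm{\beta}\in\ell_{q_1}\hookrightarrow\ell_{q_2}$ by the standard monotonicity of sequence spaces.

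For the necessary direction I would argue by contraposition: for any $\bm{\alpha}\notin\ell_{q^\ast}$, exhibit $\bm{\beta}\in\ell_{q_1}$ with $\bm{\alpha}\bm{\beta}\notin\ell_{q_2}$. When $q^\ast=\infty$ this is elementary: since $\bm{\alpha}$ is unbounded, select indices $k_j$ with $\alpha_{k_j}\ge 2^j$ and concentrate $\bm{\beta}$ on them with weights $\beta_{k_j}=2^{-j/q_1}j^{-2/q_1}$, which lie in $\ell_{q_1}$ while $(\alpha_{k_j}\beta_{k_j})$ is bounded below by a sequence that is not in $\ell_{q_2}$. When $q^\ast<\infty$, the clean route is to set $\tilde\alpha_k=\alpha_k^{q_2}$, $\tilde\beta_k=\beta_k^{q_2}$ and $p=q_1/q_2>1$, so that the hypothesis reads ``$\sum\tilde\alpha_k\tilde\beta_k<\infty$ whenever $(\tilde\beta_k)\in\ell_p$''. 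The classical inequality of Landau, \cite[Thm.~161]{hardy}, then forces $(\tilde\alpha_k)\in\ell_{p'}$, and a short computation gives $q_2 p'=q^\ast$, which is the desired conclusion; see also \cite[Prop.~4.1]{Ca-L-Lloc} for a formulation in the same spirit.

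The main obstacle is precisely this necessity in the quasi-Banach range $\min(q_1,q_2)<1$, where one cannot rely on the standard norm duality to extract an extremal $\bm{\beta}$. The reduction to Hardy--Littlewood--P\'olya sidesteps this difficulty because their Theorem~161 is stated purely in terms of the bilinear pairing of non-negative sequences, with no recourse to norms.
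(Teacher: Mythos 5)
Your sufficiency argument and your treatment of the necessity in the case $q^\ast<\infty$ are correct and essentially coincide with the paper's proof: the paper likewise handles the ``only if'' part by H\"older's inequality (resp.\ monotonicity of the $\ell_u$-scale when $q^\ast=\infty$) and reduces the remaining ``if'' part to Landau's theorem. Your substitution $\tilde\alpha_k=\alpha_k^{q_2}$, $\tilde\beta_k=\beta_k^{q_2}$, $p=q_1/q_2>1$ is exactly the right way to make that reduction explicit in the quasi-Banach range, and the exponent bookkeeping $q_2p'=q^\ast$ checks out.

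The gap is in the necessity for $q^\ast=\infty$, i.e.\ the claim that the multiplier property forces $\bm{\alpha}\in\ell_\infty$ when $q_1\leq q_2$. Your witness $\beta_{k_j}=2^{-j/q_1}j^{-2/q_1}$ does not involve the actual values $\alpha_{k_j}$, only the lower bound $\alpha_{k_j}\geq 2^j$, so all you can conclude is $\alpha_{k_j}\beta_{k_j}\geq 2^{j(1-1/q_1)}j^{-2/q_1}$. For $q_1>1$ this lower bound tends to infinity and you are done; but for $0<q_1\leq 1$ --- precisely the quasi-Banach range in which the paper needs the lemma --- it is a sequence lying in every $\ell_{q_2}$ with $q_2\geq q_1$ (geometrically decaying if $q_1<1$), so no contradiction follows. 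Concretely, take $q_1=q_2=\tfrac12$ and $\alpha_k=k$; choosing $k_j=2^j$ your recipe gives $\beta_{k_j}=2^{-2j}j^{-4}\in\ell_{1/2}$ and $\alpha_{k_j}\beta_{k_j}=2^{-j}j^{-4}$, whose $\tfrac12$-powers sum, so $\bm{\alpha}\bm{\beta}\in\ell_{1/2}$: your $\bm{\beta}$ simply is not a counterexample, although one exists (e.g.\ $\beta_k=k^{-3}$). The repair is to normalise $\bm{\beta}$ by $\bm{\alpha}$ itself, which is what the paper does: pass to a sub-subsequence $(t_j)$ along which $\alpha_{t_{j+1}}\geq 2\,\alpha_{t_j}$ and set $\beta_{t_j}=j^{1/q_1}\alpha_{t_j}^{-1}$; the doubling guarantees $\sum_j j\,\alpha_{t_j}^{-q_1}<\infty$, hence $\bm{\beta}\in\ell_{q_1}$, while $\alpha_{t_j}\beta_{t_j}=j^{1/q_1}$ is not in $\ell_{q_2}$ for any $q_2\leq\infty$.
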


\begin{proof}
The `only if\,'-part is a consequence of H\"older's inequality (when
$q^\ast<\infty$) or the monotonicity of the $\ell_u$-scale (when
$q^\ast=\infty$). Moreover, the `if\,'-part in case of $q^\ast<\infty$
is covered by the above-mentioned result of Landau, see
\cite[Thm.~161, p.~120]{hardy}. So we are left to show the necessity
of $\bm{\alpha}\in\ell_\infty$ when assuming  that $\bm{\alpha}\bm{\beta} \in \ell_{q_2}$ whenever $\bm{\beta}\in\ell_{q_1}$, where in particular we have now that $q_1\leq q_2$. This is surely well-known, but for the sake of
completeness we insert a short argument here. The case $q_1=\infty$
being obvious, we need to deal with the case $q_1<\infty$ merely. \\
Assume, to the contrary, that $\bm{\alpha}$ is unbounded. Then we can find a subsequence $(\alpha_{k_j})_j\subset \bm{\alpha}$ which is strictly increasing and satisfies $\alpha_{k_j} \xrightarrow[j\to\infty]{} \infty$. We refine this subsequence further by the following procedure: let $\alpha_{t_1}= \alpha_{k_1}$ and choose consecutively the index $t_{j+1}$, $j\in\nat$, such that 
\[
\frac{\alpha_{t_{j+1}-1}}{\alpha_{t_j}} < 2\quad\text{and} \quad \frac{\alpha_{t_{j+1}}}{\alpha_{t_j}} \geq 2,\quad j\in\nat,
\]
that is, $t_{j+1}$ is the smallest index larger than $t_j$ such that $\alpha_{t_{j+1}}\geq 2\alpha_{t_j}$. Now consider the sequence $\bm{\beta }=(\beta_k)_k$ given by
\begin{align*}
\beta_k & = \begin{cases} j^{\frac{1}{q_1}} \alpha_{t_j}^{-1}, & k=t_j, \ j\in\nat,\\ 0, & \text{otherwise}.\end{cases}
\intertext{Then}
\|\bm{\beta} | \ell_{q_1}\| &= \Big(\sum_{j=1}^\infty j \alpha_{t_j}^{-q_1}\Big)^{\frac{1}{q_1}} \ = \ \Big(\sum_{j=1}^\infty j \Big(\frac{\alpha_{t_j}}{\alpha_{t_{j-1}}} \cdots \frac{\alpha_{t_2}}{\alpha_{t_1}} \Big)^{-q_1} \alpha_{t_1}^{-q_1}\Big)^{\frac{1}{q_1}} \\
&\leq \ \alpha_{k_1}^{-1} \Big(\sum_{j=1}^\infty j 2^{-(j-1)q_1}\Big)^{\frac{1}{q_1}} \ < \infty,
\intertext{whereas}
\|\bm{\alpha}\bm{\beta} | \ell_{q_2}\| &= \Big(\sum_{j=1}^\infty \alpha_{t_j}^{q_2} j^{\frac{q_2}{q_1}} \alpha_{t_j}^{-q_2} \Big)^{\frac{1}{q_2}} \ = \ \Big(\sum_{j=1}^\infty j^{\frac{q_2}{q_1}} \Big)^{\frac{1}{q_2}} = \infty
\end{align*}
(modification if $q_2=\infty$), which disproves $\bm{\alpha} \bm{\beta} \in \ell_{q_2}$. 
\end{proof}

We return to the notion of strongly isotropic measures given in Remark~\ref{strongly-isotropic} and will from
now on use the following abbreviations introduced in \cite{bricchi-3}: 
Let $\Gamma$ be some $h$-set, $h\in \HH$ a measure function. Then we denote 
\begin{equation}
\bm{h} := \left(h_j\right)_{j\in\no} \quad \mbox{with}\quad  h_j := h(2^{-j}),
\quad j\in\no,
\label{h-seq}
\end{equation}
for the sequence connected with $h\in\HH$. 

\begin{remark}\label{strong-iso-index}
It is easy to verify that the hypothesis $\upind{\bm{h}}<0$ implies
that there exists $\kappa_0>1$ such that $h_{j+1} \leq \kappa_0^{-1}
h_j$ for $j\geq j_0$ and some appropriately chosen $j_0 \in
\no$. Therefore $\bm{h} \in \ell_1$ and we can also say that the measure $\mu$ corresponding to $h$ is strongly
isotropic (at least if we relax, for convenience, the assumptions $h(1)=1$ and $|\Gamma|=0$ and
also admit that \eqref{cond-strong-iso} holds for all $j\in\nat$
with $j\geq j_0$ for some fixed starting term $j_0\in\nat$ only), cf. Remark~\ref{strongly-isotropic}. More precisely, for
arbitrary admissible sequences $\bm{\sigma}$ the assumption
$\upind{\bm{\sigma}}<0$ leads to $\bm{\sigma}\in\ell_1$ and also to the equivalences \eqref{s-i_1},
\eqref{s-i_2} with $\bm{h}$ replaced by $\bm{\sigma}$ .
\end{remark}

\section{Besov spaces}
\label{sect-2}

The main object of the paper are embeddings of Besov spaces defined (as
trace spaces) on some $h$-set $\Gamma$. We approach this concept now.

\subsection{Besov spaces of generalised smoothness}
%
\label{sect-2-0}

First we want to introduce function spaces of generalised smoothness and need
to recall some notation. By $\mathcal{S}(\rn)$ we denote the Schwartz
space of all complex-valued, infinitely differentiable and rapidly
decreasing functions on $\rn$ and by $\mathcal{S}'(\rn)$ the dual space
of all tempered distributions on $\rn$. If $\varphi \in
\mathcal{S}(\rn)$, then
\begin{equation}\label{Ftransform}
\widehat{\varphi}(\xi)\equiv (\mathcal F
\varphi)(\xi):=(2\pi)^{-n/2}\int_{\rn}e^{-ix\xi}\varphi(x) \dint
x, \quad \xi\in \rn,
\end{equation}
denotes the Fourier transform of $\varphi$. As usual, ${\mathcal
F}^{-1} \varphi$ or $\varphi^{\vee}$ stands for the inverse
Fourier transform, given by the right-hand side of
\eqref{Ftransform} with $i$ in place of $-i$. Here $x\xi$ denotes
the scalar product in $\rn$. Both $\mathcal F$ and ${\mathcal
F}^{-1}$ are extended to $\mathcal{S}'(\rn)$ in the standard way. Let
$\varphi_0\in\mathcal{S}(\rn)$ be such that
\begin{equation*}  \label{phi}
\varphi_0(x)=1 \quad \mbox{if}\quad |x|\leq 1 \quad \mbox{and}
\quad \supp \varphi_0 \subset \{x\in\rn: |x|\leq 2\},
\end{equation*}
and for each $j\in\nat$ let
\begin{equation*}\label{phi-j}
\varphi_j(x):=\varphi_0(2^{-j}x)-\varphi_0(2^{-j+1}x), \quad x\in
\rn.
\end{equation*}
Then the sequence $(\varphi_j)_{j=0}^{\infty}$ forms a smooth dyadic resolution of
unity.

\begin{definition}
Let $\bm{\sigma}$ be an admissible sequence, $0<p,q\leq\infty$, and
$(\varphi_j)_{j=0}^{\infty}$ a smooth dyadic resolution of
unity as described above. Then
\begin{equation*}
\Bsi(\rn) = \left\{ f\in \mathcal{S}'(\rn) ~: \biggl( \sum_{j=0}^\infty
\sigma_j^q \left\| {\mathcal F}^{-1} \varphi_j {\mathcal F}f |
L_p(\rn)\right\|^q \biggr)^{1/q} < \infty\right\}
\end{equation*}
$($with the usual modification if $q=\infty)$.
\end{definition}
\begin{remark}\label{R-Bsigma}
These spaces are quasi-Banach spaces, independent of the chosen resolution of
unity, and $\mathcal{S}(\rn)$ is dense in $\Bsi(\rn)$ when $p<\infty$ and
$q<\infty$. Taking $\bm{\sigma} = \paren{s}$, $ 
s\in\real$, we obtain the classical Besov spaces $\B(\rn)$, whereas $ \bm{\sigma}
= (2^{js} \Psi(2^{-j}))_j $, $ s\in\real$, $\Psi $ an
admissible function, leads to spaces $B^{(s,\Psi)}_{p,q}(\rn)$, studied in
\cite{moura, moura-diss} in detail. Moreover, the above spaces $\Bsi(\rn)$
are special cases of the more general approach investigated in \cite{F-L}. For
the theory of spaces $\B(\rn)$ we refer to the series of monographs 
\cite{T-F1,T-F2,T-Frac,T-func,T-F3}.
\end{remark}

For later use, we briefly describe the wavelet and atomic characterisation
of  Besov spaces with generalised smoothness obtained in
\cite{almeida} and \cite{F-L}, respectively. Let $\widetilde{\phi}$ be a scaling function 
on $\real$ with compact support and of sufficiently high regularity.
Let $\widetilde{\psi}$ be an associated compactly supported wavelet. Then the 
tensor-product ansatz yields
a scaling function $\phi$  and associated wavelets
$\psi_1, \ldots, \psi_{2^{n}-1}$, all defined now on $\rn$. 
We suppose $\widetilde{\phi} \in C^{N_1}(\real)$ and $\supp
\widetilde{\phi}, \supp \widetilde{\psi}
\subset [-N_2,\, N_2]$ for certain natural numbers $N_1$ and $N_2$.
This implies
\begin{equation}\label{2-1-2}
\phi, \, \psi_l \in C^{N_1}(\rn) \quad \text{and} \quad 
\supp \phi ,\, \supp \psi_l \subset [-N_3,\, N_3]^n , 
\end{equation}
for $l=1, \ldots \, , 2^{n}-1$. We use the standard abbreviations 
\begin{equation*}\label{convention}
\phi_{j,m}(x) =  \phi(2^j x-m) \quad
\text{and}\quad
\psi^l_{j,m}(x) =   \psi_l(2^j x-m) . 
\end{equation*}
To formulate the result we  introduce some suitable sequence spaces. 
For $0<p<\infty$, $0<q \leq\infty$, and $\bm{\sigma}$ an admissible sequence, let
\begin{multline*}
b^{\bm{\sigma}}_{p,q} :=   \Bigg\{ \lambda = 
\{\lambda_{j,m}\}_{j,m} : \     \lambda_{j,m} \in \comp\, , 
\\
\| \, \lambda \, |b^{\bm{\sigma}}_{p,q}\| = \Big\| 
\Big\{ \sigma_j 2^{- j \frac{n}{p} }\,  \Big\|\sum_{m \in
  \zn}\lambda_{j,m}\, \chi^{(p)}_{j,m}| L_p(\rn) 
\Big\|\Big\}_{j\in\no} | \ell_q\Big\| < \infty \Bigg\}. 
\end{multline*}
In view of the definition of $Q_{j,m}$ and $\chi^{(p)}_{j,m}$ one can easily verify that  
\begin{align*}\label{bspr}
\big\|\lambda |b^{\bm{\sigma}}_{p,q}\big\| & \sim  
\bigg\| \Big\{ \sigma_j  2^{- j \frac{n}{p} } \Big(\sum_{m \in
  \zn}|\lambda_{j,m}|^p \    \Big)^{1/p}\Big\}_{j\in\no} | \ell_q \bigg\|.
\end{align*}

\begin{theorem}\label{wavesigma}  
Let $0 < p<\infty$, $0<q \le \infty$ and  $\bm{\sigma}$ be an admissible
sequence. Then there exists a number $N_0=N_0(\bm{\sigma},p,n)$ and
for any $N_1>N_0$ a
scaling function $\phi$  and wavelets $\psi^l$,  $l=1, \ldots ,2^n-1$,
as above satisfying \eqref{2-1-2}, such that the
following holds: A distribution $f \in \SpRn$ belongs to $\Bsi(\rn)$
if, and only if, it can be represented by
\[ f= \sum_{m\in\zn} \mu_m  \phi_{0,m} +
\sum_{l=1}^{2^n-1} \sum_{j=0}^\infty \sum_{m \in
  \zn} \lambda^l_{j,m}\psi^l_{j,m}\quad \text{with}\quad
\mu\in\ell_p\quad\text{and}\quad \lambda^l\in b^{\bm{\sigma}}_{p,q},
\]
$l=1, \dots, 2^n-1$ (unconditional convergence in $\SpRn$). Moreover, the coefficients can
be uniquely determined by
\[
\mu_m = 2^{jn} \langle f,\phi_{0,m}\rangle\quad\text{and}\quad
\lambda^l_{j,m}=2^{jn}\langle f,\psi^l_{j,m}\rangle, \quad m\in\zn,
\ j\in\no, \ l=1, \dots, 2^n-1,
\]
and 
\begin{align*}
\| \, f \, |\Bsi(\rn)\|^\star  = \ &  
\Big\| \left\{\langle f,\phi_{0,m}\rangle \right\}_{m\in \zn} |
\ell_p\Big\| 
+ \sum_{l=1}^{2^n-1}
\Big\| \left\{\langle f,\psi^l_{j,m}\rangle \right\}_{j\in \no, m\in \zn} | b^{\widetilde{\bm{\sigma}}}_{p,q}\Big\|
\end{align*}
is an 
equivalent $($quasi-$)$ norm in $\Bsi(\rn)$, where $\widetilde{\bm{\sigma}} =
\bm{\sigma}\paren{n}$. 
\end{theorem}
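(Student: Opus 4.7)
The plan is to reduce the statement to two pillars already available in the literature: (i) the atomic decomposition theorem for $\Bsi(\rn)$ from \cite{F-L}, which characterises membership in $\Bsi(\rn)$ in terms of linear combinations of compactly supported building blocks with prescribed smoothness and vanishing-moment conditions, and (ii) a local-means/Calderón-type argument that identifies wavelet coefficients $2^{jn}\langle f,\psi^l_{j,m}\rangle$ with discrete local means of $f$. The arithmetic of the regularity indices $\upind{\sigma}$, $\lowind{\sigma}$ from Subsection~\ref{adm_seq} will be used to choose $N_0$ large enough so that the Daubechies-type functions $\phi,\psi^l$ arising from \eqref{2-1-2} simultaneously satisfy both sets of hypotheses; concretely one expects (up to a controlled shift by $p$ and $n$) something like $N_0 \sim \max\bigl(\upind{\sigma}, n(1/p-1)_+ - \lowind{\sigma}\bigr)$, where the first term controls smoothness of the atoms and the second the number of vanishing moments needed.

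For the synthesis (sufficiency) direction, given $\mu\in\ell_p$ and $\lambda^l\in b^{\bm{\sigma}}_{p,q}$, I would normalise $\psi^l_{j,m}$ to have $\|\psi^l_{j,m}|L_p\|\sim 2^{-jn/p}$, absorb the factor into the coefficients, and recognise the renormalised functions as $(\bm{\sigma},p)$-atoms in the sense of \cite{F-L}; this is where \eqref{2-1-2} together with the vanishing moments of $\widetilde{\psi}$ enters, and the shift from $\bm{\sigma}$ to $\widetilde{\bm{\sigma}}=\bm{\sigma}\paren{n}$ in the final expression comes exactly from matching the wavelet normalisation $\langle f,\psi^l_{j,m}\rangle$ against the $L_p$-normalised atomic convention. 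The atomic theorem from \cite{F-L} then yields convergence in $\SpRn$ and the norm estimate
\[
\|f|\Bsi(\rn)\| \,\lesssim\, \|\mu|\ell_p\| + \sum_{l=1}^{2^n-1}\bigl\|\{\langle f,\psi^l_{j,m}\rangle\}_{j,m}\bigl|b^{\widetilde{\bm{\sigma}}}_{p,q}\bigr\|.
\]

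For the analysis (necessity) direction, I would pick $f\in\Bsi(\rn)$ and dominate $|\langle f,\psi^l_{j,m}\rangle|$ via a local means estimate: because $\psi^l_{j,m}$ is compactly supported around $2^{-j}m$ with vanishing moments up to a high order, one obtains (after using the Paley--Wiener-type splitting of $f$ through the resolution $(\varphi_j)_{j\ge 0}$ together with a Peetre maximal function bound) a pointwise control of the wavelet coefficient by the local mean of $\bigl({\mathcal F}^{-1}\varphi_j{\mathcal F}f\bigr)$, which upon taking the $b^{\widetilde{\bm{\sigma}}}_{p,q}$-norm reproduces the defining quasi-norm of $\Bsi(\rn)$. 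Uniqueness of the coefficients and the explicit formula for $\mu_m,\lambda^l_{j,m}$ are then automatic from the biorthogonality of the underlying (compactly supported) Daubechies-type orthonormal system. The main obstacle, and the one deserving genuine care, is the bookkeeping of smoothness and cancellation: since $\bm{\sigma}$ is merely admissible and not of polynomial type, one must use \eqref{estimate} to convert the abstract hypotheses of the atomic theorem (formulated in terms of $\lowind{\sigma}$ and $\upind{\sigma}$) into an explicit integer threshold $N_0$ on $N_1$, and verify that the same $N_0$ suffices both for the synthesis and for the Peetre-maximal-function step in the analysis.
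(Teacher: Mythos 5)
The first thing to note is that the paper does not prove Theorem~\ref{wavesigma} at all: it is quoted verbatim from the literature, namely from \cite{almeida} (see \cite[Cors.~16, 17]{almeida}, cited in Remark~\ref{waverem}), with the atomic machinery coming from \cite{F-L}. So there is no in-paper argument to compare against, and the relevant question is whether your sketch is a viable route to the quoted result.

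As an outline it is essentially the standard one, and it is the route actually taken in \cite{almeida}: synthesis via the atomic decomposition of Proposition~\ref{atomicdecomposition} after renormalising the wavelets into $(\bm{\sigma},p)_{K,L}$-atoms, analysis via local means and Peetre maximal functions, and a threshold $N_0$ dictated by $K>\upind{\sigma}$ and $L>-1+n(1/\min(1,p)-1)-\lowind{\sigma}$, which matches your proposed $N_0\sim\max\bigl(\upind{\sigma},\,n(1/p-1)_+-\lowind{\sigma}\bigr)$. Your explanation of the shift $\bm{\sigma}\mapsto\widetilde{\bm{\sigma}}=\bm{\sigma}\paren{n}$ as pure bookkeeping between the $L_\infty$-normalised wavelets, the factor $2^{jn}$ in the coefficient functionals, and the $L_p$-normalised atomic convention is also correct. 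Two points are left entirely to citation and deserve at least a sentence each in a real proof: first, for $f\in\SpRn$ only assumed to satisfy the coefficient conditions, one must justify that the dual pairings $\langle f,\psi^l_{j,m}\rangle$ are meaningful and that the expansion converges unconditionally in $\SpRn$ (in \cite{almeida} this is handled by first establishing convergence of the synthesis operator and then using the $L_2$-orthonormality of the Daubechies system on a dense subclass); second, the local means characterisation for spaces with merely admissible $\bm{\sigma}$ is itself a nontrivial theorem of \cite{F-L}, not something that follows from \eqref{estimate} alone, since \eqref{estimate} only sandwiches $\bm{\sigma}$ between powers of $2^{j}$ up to $\varepsilon$-losses and cannot by itself reproduce the exact quasi-norm. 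With those two ingredients imported, your sketch closes correctly.
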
 

\begin{remark}\label{waverem}
It follows from Theorem~\ref{wavesigma} that, under the conditions given, the mapping
$$
T\,:\,f\;\mapsto \; \Big( \left\{\langle f,\phi_{0,m}\rangle \right\}_{m\in \zn}, \left\{\langle
f,\psi^l_{j,m}\rangle \right\}_{j\in \no, m\in \zn, l=1,\ldots, 2^n-1}\Big)
$$
is an isomorphism of $\Bsi(\rn)$ onto $\ell_p\oplus
\big(\oplus_{l=1}^{2^n-1} b^{\widetilde{\bm{\sigma}}}_{p,q}\big)$,
$\widetilde{\bm{\sigma}} = \bm{\sigma}\paren{n}$,
cf. \cite[Cor.~17]{almeida}. Moreover, if $q<\infty$ and $N_1$ is
chosen large enough, then the wavelet system forms an unconditional
Schauder basis in $\Bsi(\rn)$, \cite[Cor.~16]{almeida}.
\end{remark}

{
\begin{definition} \label{atoms-Bsigma}
Let $K \in \nat_0$ and $c> 1$.\\[-3ex]
\bli
\item[{\upshape\bfseries (i)\hfill}]
A $K$ times differentiable
complex-valued function $a(x)$ in $\rn$ $($continuous if $K=0)$ is
called {\em an $1_K$-atom} if
$$
\supp a \subset c\,Q_{0 m}\quad\mbox{for some}\quad m\in\zn,
$$
and
$$
|\Dd^{\alpha}a(x)|\leq 1 \quad \mbox{for} \quad |\alpha|\leq K.
$$
\item[{\upshape\bfseries (ii)\hfill}]
Let $L+1\in\nat_0$, and $\bm{\sigma}$ admissible. A $K$
times differentiable complex-valued function $a(x)$ in $\rn$
$($continuous if $K=0)$ is called {\em an $(\bm{\sigma}, p)_{K,L}$-atom}
if, for some $j \in \no$,
$$
\supp a \subset c\,Q_{j m}\quad\mbox{for some}\quad m\in \zn,
$$
$$
|\Dd^{\alpha}a(x)|\leq \sigma_j^{-1} \ 2^{j(\frac{n}{p}+|\alpha|)}\, \quad
\mbox{for} \quad |\alpha|\leq K,\quad x\in\rn,
$$
and
$$
\int\limits_{\rn} x^{\beta}a(x) \dint x=0 \quad \mbox{if}
\quad|\beta|\leq L.
$$
\eli
\end{definition}

We adopt the usual convention to denote atoms located at $Q_{j m}$ as
above by
$a_{j,m}$, $j\in\no$, $m\in\zn$. Let $b_{p,q} =
b_{p,q}^{\paren{n/p}}$, $0<p<\infty$, $0<q\leq\infty$.
The atomic decomposition theorem for $\Bsi(\rn)$ reads as follows, see
\cite[Thm.~4.4.3, Rem. 4.4.8]{F-L}.

\begin{proposition} \label{atomicdecomposition}
Let $\bm{\sigma}$ be admissible, $0<p,q\leq\infty$, $c>1$, $K\in \no$ and $L+1\in\no$ with
\begin{equation*}
K > \upind{\sigma} \qquad\mbox{and}\qquad L > -1 +
n\left(\frac{1}{\min(1,p)} -1\right) - \lowind{\sigma}
\label{moment-Bsi}
\end{equation*}
be fixed. Then $f\in {\mathcal S}'(\rn)$ belongs to
$\Bsi(\rn)$ if, and only if, it can be represented as
\begin{equation}\label{series}
f=\sum_{j=0}^{\infty} \sum_{m\in\zn} \lambda_{j, m}\,a_{j,m}(x), 
\quad\mbox{convergence being in } {\mathcal S}'(\rn),
\end{equation}
where $\lambda \in b_{p,q}$ and  $a_{j, m}(x)$ are $1_K$-atoms $(j=0)$ or
$(\bm{\sigma},p)_{K,L}$-atoms $(j \in \nat)$ according to 
Definition~\ref{atoms-Bsigma}. Furthermore
$$
\inf \|\lambda\,|\,b_{p,q}\|,
$$
where the infimum is taken over all admissible representations
\eqref{series}, is an equivalent quasi-norm in $\Bsi(\rn)$.
\end{proposition}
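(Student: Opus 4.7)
The plan is to establish the two directions separately: for the ``if''-part we bound $\|f|\Bsi(\rn)\|$ in terms of an arbitrary atomic representation of $f$, while for the ``only if''-part we extract atoms from the wavelet expansion provided by Theorem~\ref{wavesigma}.

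For the sufficiency, consider a formal atomic expansion $f=\sum_{j,m}\lambda_{j,m}a_{j,m}$ with $\lambda\in b_{p,q}$; convergence in $\mathcal{S}'(\rn)$ follows a posteriori from the same estimates applied to partial sums together with the embedding $\Bsi(\rn)\hookrightarrow\mathcal{S}'(\rn)$. The central analytic step is a pointwise estimate on $\mathcal{F}^{-1}\varphi_k\mathcal{F}a_{j,m}$ for $j\geq 1$, in which two regimes arise. When $k\leq j$, Taylor-expanding $\mathcal{F}^{-1}\varphi_k(x-\cdot)$ on $\mathrm{supp}\,a_{j,m}$ and invoking the $L+1$ vanishing moments of the atom produces a pointwise bound of order $\sigma_j^{-1}\,2^{jn/p}\,2^{(k-j)(L+1)}$ multiplied by a rapidly decaying factor of type $(1+2^k|x-2^{-j}m|)^{-N}$; when $k>j$, $K$-fold integration by parts against the bounded derivatives of $a_{j,m}$ yields a pointwise bound of order $\sigma_j^{-1}\,2^{jn/p}\,2^{(j-k)K}$ with an analogous localisation. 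The case $j=0$ is treated by using only the size and support of the $1_K$-atom. Summing over $m\in\zn$ via the $\min(1,p)$-triangle inequality combined with the identity $\|\sum_m\lambda_{j,m}\chi^{(p)}_{j,m}|L_p\|\sim 2^{-jn/p}(\sum_m|\lambda_{j,m}|^p)^{1/p}$ converts the pointwise bounds into
\[
\sigma_k\|\mathcal{F}^{-1}\varphi_k\mathcal{F}f|L_p\|\;\lesssim\; \sum_{j\geq 0}\tfrac{\sigma_k}{\sigma_j}\,2^{-|k-j|\varepsilon_0}\,\sigma_j\,A_j,
\]
where $A_j:=\|\sum_m\lambda_{j,m}\chi^{(p)}_{j,m}|L_p\|$ and $\varepsilon_0>0$ is precisely guaranteed by the hypotheses $K>\upind{\sigma}$ and $L>-1+n(1/\min(1,p)-1)-\lowind{\sigma}$ upon choosing $\varepsilon$ in \eqref{estimate} sufficiently small. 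The ratio $\sigma_k/\sigma_j$ is then absorbed into the geometric decay via \eqref{estimate}, and a discrete Young-type inequality on $\ell_q$ delivers $\|f|\Bsi(\rn)\|\lesssim \|\lambda|b_{p,q}\|$.

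For the necessity, choose $N_1>\max(K,L+1)$ and apply Theorem~\ref{wavesigma}: any $f\in\Bsi(\rn)$ admits a unique wavelet expansion $f=\sum_m\mu_m\phi_{0,m}+\sum_{l,j,m}\lambda^l_{j,m}\psi^l_{j,m}$ with $\|\mu|\ell_p\|+\sum_l\|\{\langle f,\psi^l_{j,m}\rangle\}|b^{\widetilde{\bm{\sigma}}}_{p,q}\|\lesssim \|f|\Bsi(\rn)\|$, where $\widetilde{\bm{\sigma}}=\bm{\sigma}\paren{n}$. Because the tensor-product wavelets $\psi^l_{j,m}$ are compactly supported, of class $C^{N_1}$, and carry $N_1$ vanishing moments, an absolute rescaling $a^l_{j,m}:=c_0\,\sigma_j^{-1}\,2^{jn/p}\psi^l_{j,m}$ produces $(\bm{\sigma},p)_{K,L}$-atoms in the sense of Definition~\ref{atoms-Bsigma}(ii) for a suitable absolute constant $c_0>0$; similarly $c_0\phi_{0,m}$ is a $1_K$-atom. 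Rewriting the wavelet expansion with these atoms yields new coefficients $\tilde\lambda^l_{j,m}=c_0^{-1}\sigma_j\,2^{-jn/p}\lambda^l_{j,m}$, and using $\lambda^l_{j,m}=2^{jn}\langle f,\psi^l_{j,m}\rangle$ together with $\widetilde{\bm{\sigma}}=\bm{\sigma}\paren{n}$ one checks that $\|\tilde\lambda^l|b_{p,q}\|$ equals $\|\{\langle f,\psi^l_{j,m}\rangle\}|b^{\widetilde{\bm{\sigma}}}_{p,q}\|$ up to a constant factor; hence $\inf\|\tilde\lambda|b_{p,q}\|\lesssim \|f|\Bsi(\rn)\|$.

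The principal obstacle lies in the sufficiency: extracting a strictly positive $\varepsilon_0>0$ in the geometric decay $2^{-|k-j|\varepsilon_0}$ from the interaction between the strict inequalities on $K$ and $L$ and the inequalities \eqref{estimate}, which themselves only hold up to an arbitrarily small $\varepsilon>0$. Once the atomic pointwise estimate with uniform geometric decay is secured, the remaining steps --- summing over lattice points via $p$-triangle inequalities and converting the frequency decomposition to the $b_{p,q}$-type convolution bound on $\ell_q$ --- are routine. The necessity, by contrast, is essentially a corollary of Theorem~\ref{wavesigma}, its content being only the straightforward verification that compactly supported, sufficiently smooth wavelets with sufficiently many vanishing moments are, after rescaling, $(\bm{\sigma},p)_{K,L}$-atoms.
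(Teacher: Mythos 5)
The paper does not prove this proposition at all: it is quoted verbatim from Farkas--Leopold \cite[Thm.~4.4.3, Rem.~4.4.8]{F-L}, so there is no in-paper argument to compare against. Judged on its own terms, your outline is the standard and correct one. The sufficiency part --- moment conditions giving decay for $k\leq j$, smoothness of the atom giving decay for $k>j$, and the observation that the strict inequalities $K>\upind{\sigma}$ and $L>-1+n(1/\min(1,p)-1)-\lowind{\sigma}$ leave room to choose $\varepsilon$ in \eqref{estimate} small enough to extract a uniform geometric factor $2^{-|k-j|\varepsilon_0}$ --- is exactly the Frazier--Jawerth/Triebel scheme that \cite{F-L} adapts to generalised smoothness; you have correctly identified that this calibration of $\varepsilon_0$ is the only non-routine point. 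Your necessity argument, however, takes a genuinely different route from \cite{F-L}: rather than building atoms from a Calder\'on-type reproducing formula with local means, you piggyback on the wavelet isomorphism of Theorem~\ref{wavesigma}. That is legitimate within this paper's toolkit and shorter, and the coefficient bookkeeping ($\lambda^l_{j,m}=2^{jn}\langle f,\psi^l_{j,m}\rangle$, $\widetilde{\bm{\sigma}}=\bm{\sigma}\paren{n}$) checks out. Two caveats you should make explicit: first, the claim that $c_0\,\sigma_j^{-1}2^{jn/p}\psi^l_{j,m}$ is an $(\bm{\sigma},p)_{K,L}$-atom rests on the fact that compactly supported orthonormal wavelets of class $C^{N_1}$ automatically have vanishing moments up to order $N_1$ --- true, but it is the load-bearing point of the whole necessity direction and deserves a sentence; second, Theorem~\ref{wavesigma} is stated only for $0<p<\infty$, while the proposition allows $p=\infty$, so for $p=\infty$ your necessity argument does not apply as written and one must fall back on the local-means construction (or an $L_\infty$-adapted wavelet result).
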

}

\subsection{Trace spaces}
%
\label{sect-2-1}
Let $\Gamma$ be some $h$-set, $h\in \HH$, and recall the notation
\eqref{h-seq} for the sequence connected with $h\in\HH$. 
Given $0<p \leq \infty$, and similarly as in $L_p(\rn)$, but now with respect to the measure $\mu \sim {\mathcal H}^h|\Gamma$
related to the $h$-set $\Gamma$, $L_p(\Gamma) = L_p(\Gamma, \mu)$ stands for the quasi-Banach space of $p$-integrable (measurable, essentially bounded if $p =\infty$) functions on $\Gamma$ with respect to the measure $\mu$, quasi-normed in the obvious way. Suppose there exists
some $c>0$ such that, for all $\varphi\in\mathcal{S}(\rn)$,
\begin{equation}
\label{trace-def}
\left\| \varphi\raisebox{-0.4ex}[1.1ex][0.9ex]{$|_{\Gamma}$}~ | L_p(\Gamma)
\right\| \leq \ c\ \left\| \varphi | \Btau(\rn)\right\|,
\end{equation}
where the trace on $\Gamma$ is taken pointwise. By the density of
$\mathcal{S}(\rn)$ in $\Btau(\rn)$ for $p,q<\infty$ and the completeness of
$L_p(\Gamma)$  one can thus define for $f\in \Btau(\rn)$ in those cases its
trace $\tr{\Gamma} f = f\raisebox{-0.4ex}[1.1ex][0.9ex]{$|_{\Gamma}$}$ on
$\Gamma$ by completion of pointwise restrictions. 

\begin{remark}\label{R-unique}
In this way, $\tr{\Gamma}: \Btau(\rn) \to L_p(\Gamma)$ will be
uniquely determined by the fact that it is linear, continuous and
coincides with the pointwise restriction when restricted to
$\SRn$. The definition of $\Bsg$ as the trace space of $\Bsih$ in
$L_p(\Gamma)$ means to take $\Bsg=\tr{\Gamma}\Btau(\rn)$ for
$\bm{\tau}=\bm{\sigma} \bm{h}^{1/p} \paren{n}^{1/p}$ for which the
above approach works.
\end{remark}

The starting point for us was the trace result of {Bricchi} \cite[Thm.~5.9]{bricchi-3}, 
\begin{equation} \label{trace-Lp}
\tr{\Gamma} B^{\bm{h}^{1/p}\paren{n}^{1/p}}_{p,q}\!(\rn) = L_p(\Gamma),
\end{equation}
where $0<p<\infty$, $0<q\leq\min(p,1)$, and $\Gamma$ is an $h$-set satisfying the porosity condition
\eqref{ball-1} (this condition not being required in the proof of the embedding $\hookrightarrow$) . 
Based on this result {Bricchi} proposes in
\cite[Def.~3.3.5]{bricchi-diss} the approach to define spaces $\Bsg$
as trace spaces, conjugating general embedding results for Besov
spaces on $\rn$ (as seen for example in \cite[Thm.~3.7]{C-Fa} or
\cite[Prop.~2.2.16]{bricchi-diss}) with the fact that
\eqref{trace-def} above holds for $\bm{\tau}=\bm{h}^{1/p}\paren{n}^{1/p}$, $0<p<\infty$, $0<q\leq \min (p,1)$ (cf. \cite[Thm.~3.3.1(i)]{bricchi-diss}. Then we have, following \cite[Def.~3.3.5]{bricchi-diss}, that for $0<p,q<\infty$ and $\bm{\sigma}$ admissible with  $\lowind{\sigma}
> 0$,  Besov spaces $\Bsg$ on $\Gamma$ are defined as

\begin{equation} 
\label{defBesovGamma}
\Bsg := \tr{\Gamma} \Bsih,
\end{equation}
more precisely,
\begin{equation} 
\label{precisely}
\Bsg := \left\{ f\in L_p(\Gamma) : \exists \ g\in \Bsih,\tr{\Gamma} g =   
    f\right\},
\end{equation}
equipped with the quasi-norm
\begin{equation}
\label{qnormBG}
\left\| f |  \Bsg\right\| = \inf\left\{ \left\| g | \Bsih\right\| :  
   \tr{\Gamma} g =  f,  g\in \Bsih\right\}.
\end{equation}

This was extended in the following way in \cite[Def.~3.7]{Cae-env-h}:

\begin{definition}
Let $0<p,q<\infty$, $\bm{\sigma}$ be an admissible sequence, and $\Gamma$ be an
$h$-set. Assume that \\[-3ex]
\bli
\item[{\upshape\bfseries (i)\hfill}]
in case of $p\geq 1$ or $q\leq p<1$, 
\begin{equation*}
\label{ext-2-4-a}
\bm{\sigma}^{-1} \in \ell_{q'},
\end{equation*}
\item[{\upshape\bfseries (ii)\hfill}]
in case of $0<p<1$ and $p<q$,
\begin{equation*}
\bm{\sigma}^{-1} \bm{h}^{\frac1r-\frac1p}\in \ell_{v_r} \quad \text{for some}\quad  
r\in [p,\min(q,1)] \quad \mbox{and}\quad
\frac{1}{v_r} = \frac1r-\frac1q, 
\label{ext-2-4-b}
\end{equation*}
\eli
is satisfied. Then (it makes sense to) define $\Bsg$  as in
\eqref{defBesovGamma}, \eqref{precisely}, that is, as the trace space
of $\Bsih$ in $L_p(\Gamma)$,
again with the  quasi-norm given by \eqref{qnormBG}.
\label{defi-Bsg}
\end{definition}

\begin{remark}\label{reasonable}
Note that the -- in \eqref{defBesovGamma} implicitly given --
correspondence of smoothness $\paren{1}$  (that is, 0, in classical
notation) on $\Gamma$ and smoothness $\bm{h}^{1/p}\paren{n}^{1/p}$ on
$\rn$ is in good agreement with \eqref{trace-Lp}. 
\end{remark}

\begin{remark}\label{infty-cases}
For simplicity we restrict ourselves to the case
  $p<\infty$ and $q<\infty$ in the above definition and also in our results below, though both in
  \cite{bricchi-diss} and in \cite{Cae-env-h} the definition of Besov
  spaces on $\Gamma$ also covers the cases when $p$ or $q$ can be
  $\infty$. Then the above approach has to be modified
  appropriately. Moreover, we refer to \cite{Cae-env-h} (and also
  \cite{CaH-3}) for some argument to what extent our definition extends
  the former approach by {Bricchi}.
\end{remark}

\begin{remark}\label{BDef-d-sets}
For $\ h(r)=r^d$, $0<d<n$,  $\Gamma$ a $d$-set, and $\bm{\sigma} = \paren{s}$,
$s\in\real$, the above definition \eqref{defBesovGamma} can be rewritten
as 
\begin{equation*} 
\mathbb{B}^s_{p,q}(\Gamma) = \tr{\Gamma} B^{s+\frac{n-d}{p}}_{p,q}(\rn),
\end{equation*}
assuming $0<p,q<\infty$, $s>0$, and
\begin{equation*} 
\mathbb{B}^0_{p,q}(\Gamma) = \tr{\Gamma} B^{\frac{n-d}{p}}_{p,q}(\rn) = L_p(\Gamma),
\end{equation*}
if $0<p<\infty$, $0<q\leq \min(p,1)$. This coincides with
\cite[Def.~20.2]{T-Frac}. There is a parallel result for
$(d,\Psi)$-sets $\Gamma$ studied by {Moura} in
\cite{moura,moura-diss}, which yields for $0<p,q<\infty$, $s>0$,
and admissible $\Psi$,
\begin{equation*} 
\mathbb{B}^{(s,\Psi)}_{p,q}(\Gamma) = \tr{\Gamma}
B^{(s+\frac{n-d}{p},\Psi^{1+\frac1p})}_{p,q}(\rn), 
\end{equation*}
see \cite[Def.~2.2.7]{moura-diss}.
\end{remark}

In Section~\ref{sect-3-2} we briefly return to our above approach
\eqref{trace-def} and ask what happens if we replace the target space
$L_p$ by some possibly larger or smaller space $L_r$.


\subsection{Growth envelopes} 

\label{ge-d}
We want to apply some result on growth envelopes obtained in
\cite{Cae-env-h} later and thus briefly recall this concept and the
result for spaces $\Bsg$. Therefore we concentrate on this specific setting
mainly and refer for further details to \cite{Ha-crc} as well as
\cite{Cae-env-h}. 

Let $\Gamma$ be an $h$-set according to Definition~\ref{defi-hset}
with the corresponding Radon measure $\mu\sim{\mathcal H}^h|\Gamma$ and 
 $f $ be a $\mu-$measurable function on $\Gamma$, finite
$\mu-$a.e. Its decreasing (i.e., non-increasing) rearrangement $\ f^{\ast,\mu}\ $ is the function 
$\ f^{\ast,\mu}\ $ defined on $[0,\infty)$ by
\begin{equation*}
f^{\ast,\mu}(t) = \inf\left\{ s\geq 0 : \mu\left(\{ \gamma\in \Gamma :
|f(\gamma)|>s\}\right) \leq t\right\}\quad,\quad t\geq 0\;.
\label{f*}
\end{equation*}
We put $\ \inf\emptyset = \infty$, as usual. Note that $\ f^{\ast,\mu}\ $ is
non-negative, decreasing and right-continuous on $[0,\infty)$, 
and $f$ and $f^{\ast,\mu}$ are equi-measurable.
Furthermore, $\ f^{\ast,\mu}(0) = \|f| L_\infty(\Gamma)\|$, and $\
f^{\ast,\mu}(t)=0$ for $t > \mu(\Gamma)$.

\begin{remark}\label{omega-nu}
There is plenty of literature
on the topic of non-increasing rearrangements in general measure spaces; we refer to \cite[Ch.~2, Prop.~1.7]{BS} and
\cite[Ch.~2, \S 2]{dVL}, for instance. We decided to indicate the
measure $\mu$ here since when dealing with functions on $\rn$ and their
trace $\tr{\Gamma} f = f\raisebox{-0.4ex}[1.2ex][0.9ex]{$|_{\Gamma}$}$ on a
set $\Gamma\subset\rn$, typically with $|\Gamma|=0$, then it
naturally matters which measure $\nu$ (on $\rn$ or $\Gamma$) is taken for the decreasing
rearrangements $f^{\ast,\nu}$. But we want to avoid any further discussion here and
stick to the above situation.
\end{remark}
%
\begin{definition}\label{defi-eg}
Let $[\Omega,\nu]$ be some measure space and $X$ a quasi-normed space of
 $\nu$-measurable functions on $\Omega$, finite a.e. in $\Omega$.
A non-negative function $\egX$ defined on some interval
$(0,\varepsilon]$, $\varepsilon\in (0,1)$, is called the {\em
 $($local$)$ growth envelope function} of $X$ if 
\begin{equation*}
\egX(t) \sim \sup_{\|f|X\| \leq 1} \, f^{\ast,\nu}(t)\;,\quad t\in (0,\varepsilon].
\label{eq-eg}
\end{equation*} 
\end{definition}

\begin{remark}\label{eg-remark}
For a more general
approach, appropriate interpretations as well as a detailed account on basic properties of $\egX$ we refer to
\cite{HaHabil} and \cite{Ha-crc}. Note that there exist function spaces $X$ which do not
possess a growth envelope function in the sense that $\egX$ is not finite for
any $t>0$. It is well-known that $  X \hookrightarrow L_\infty $ if, and only if,
$ \egX$ is bounded. Let us finally mention the convenient `monotonicity' feature: 
\begin{equation}
X_1 \hookrightarrow X_2\quad\mbox{implies}\quad \egv{X_1}(t) \leq c\ \egv{X_2}(t)\quad\mbox{
for some $c>0$ and all $t\in (0,\varepsilon)$,}
\label{eg-XX}
\end{equation}
where $\varepsilon=\varepsilon(X_1,X_2)>0$ has to be chosen sufficiently
small.
\end{remark}

We refine the characterisation of $ X$ provided by the growth envelope
function and introduce some `characteristic' 
index $ \uGindex{X}$, which gives a finer measure of the (local) integrability
of functions belonging to $ X$. 
By $ \mu_{\mathsf G} $ we mean the Borel measure associated with the
non-decreasing function $ - \log \egX$, where the growth envelope
function $\egX$ of $X$ ~is assumed to be positive, non-increasing and continuous on some interval $(0,\varepsilon]$, with $\varepsilon > 0$, and $ X\not\hookrightarrow L_\infty$. 
This approach essentially coincides with the one
presented by {Triebel} in \cite[Sect.~12.1]{T-func} and
\cite[Sect.~12.8]{T-func}, see also \cite{Ha-crc} and \cite{Cae-env-h} for further details.

\begin{definition}
Let $[\Omega,\nu]$ be some measure space and $\ X
\not\hookrightarrow L_\infty \ $ some quasi-normed space of
$\nu$-measurable functions, finite a.e. on $\Omega$. Let $ \egX $ be a
positive, non-increasing and continuous growth envelope function defined on $(0,\ve]$
for some sufficiently small $\ve>0$. The index $ \uGindex{X}$, 
$ 0<\uGindex{X} \leq\infty$, is defined as the infimum of all
numbers $ v$, $\ 0<v\leq \infty$, such that
\begin{equation}
\left(\int\limits_0^\varepsilon \left[ \frac{f^\ast(t)}{\egX(t)}\right]^v
\mu_{\mathsf G}(\dint t)\right)^{1/v} \leq \; c\; \| f|X\|
\label{eq-envg}
\end{equation}
(meaning $\  
\sup_{0<t<\varepsilon} \ \frac{f^\ast(t)}{\egX(t)} \leq \; c\; \|
f|X\|$ ~ when $\ v=\infty$) 
holds for some $\ c>0\ $
and all $f\in X$. Then 
\begin{equation*}
\envg\big( X \big) = \left(\egX(\cdot), \uGindex{X} \right)
\label{eq-growth-env}
\end{equation*}
is called the {\em (local) growth envelope} for the function space $X$.
\label{defi-envg}
\end{definition}

\begin{remark}\label{post-defi-envg}
Since \eqref{eq-envg} holds with $ v=\infty $ in any case and the corresponding expressions on the
left-hand
side are, up to multiplicative (positive) constants, non-increasing in $ v $ by
\cite[Prop.~12.2]{T-func}, it is reasonable to ask for the {\em smallest} parameter $v$
satisfying \eqref{eq-envg}. Note that parallel to \eqref{eg-XX} there
is a similar `monotonicity' feature for
corresponding indices that can be found in \cite{Ha-crc}.
\end{remark}


\begin{example}\label{env-Lpq}
Let $\Gamma\subset\rn$ be 
an $h$-set in the sense of Definition~\ref{defi-hset} and
$0<p<\infty$.  Then 
$$
\envg\left(L_p(\Gamma)\right) \ = \left(t^{-\frac1p}, \ p\right).
$$
This result can be found in \cite[Prop.~2.11]{Cae-env-h} (as a special
case of a corresponding result for the more general Lorentz spaces $L_{p,q}(\Gamma)$). In case of $d$-sets, $h(r)=r^d$, $0<d< n$, the result remains
unchanged and coincides with the usual $\rn$-situation as well as more
abstract settings, cf. \cite[Sect.~4]{Ha-crc}.
\end{example}

Before we can present our results for spaces $\Bsg$ we need some
preparation. We shall use a convenient notation introduced in
\cite{C-DM-2} and adapted 
in \cite{C-Fa}. A function $\Lambda : (0,\infty) \to (0,\infty)$
is called {\em admissible} if it is continuous and satisfies for any $b>0$
that $ \Lambda(y) \sim \Lambda(by)$, $y>0$, with equivalence constants
independent of $y$. Let $\bm{\tau}$ be an admissible sequence according to
Definition~\ref{defi-sigma-adm}, and $\bm{N} = \left(N_j\right)_j\ $ a sequence of
positive numbers such that for some $\lambda_0>1$ it holds $\lambda_0 N_j \leq
N_{j+1}$, $j\in\no$. In \cite[Ex.~2.3]{C-Fa} a construction
of an admissible function $\Lambda=\Lambda_{\bm{\tau},{\bm{N}}}$ is given, that satisfies
\begin{equation}
\Lambda_{\bm{\tau},{\bm{N}}}(y) \ \sim \ \tau_j, \quad y \in \left[N_j,
  N_{j+1}\right], \quad j\in \no.
\label{Lambda}
\end{equation}

Let $\bm{\sigma}$ be an admissible sequence and, for $h\in \HH$, let $N_j = h_j^{-1}$, $j\in\no$, satisfy $\lambda_0 N_j \leq
N_{j+1}$, $j\in\no$, for some $\lambda_0>1$. We denote
$\Sigma_{\bm{\sigma},\bm{h}} := \Lambda_{\bm{\sigma},\bm{h}^{-1}}$. It satisfies, in particular,
\begin{equation}
\Sisih (y) \ \sim \ \sigma_j, \quad y \in
  \left[h_j^{-1}, h_{j+1}^{-1}\right], \quad j\in \no.
\label{Lambda-sh}
\end{equation}

Let $\ J_0\in\nat$ be chosen such that $h_{J_0} <
1$. For $0<r,u\leq\infty$ and $\Sisih$ we introduce the function $\Psi_{r,u} :
\left(0,h_{J_0}\right] \rightarrow \real$ by
\begin{equation}
\Psi_{r,u}(t) := \left(\ \int\limits_t^1 \ y^{-\frac{u}{r}}\
\Sisih(y^{-1})^{-u}\ \frac{\dint y}{y}\right)^{1/u},
\label{Psi-ru}
\end{equation}
with the usual modification for $u=\infty$,
\begin{equation}
\Psi_{r,\infty}(t) := \sup_{t\leq y\leq 1}\ y^{-\frac{1}{r}}\
\Sisih(y^{-1})^{-1} .
\label{Psi-ru'}
\end{equation}
In particular, $\Psi_{r,u}$ is positive, monotonically
decreasing and continuous, being also differentiable when $u\neq\infty$ --- cf.
\cite[Lemma~2.5]{C-Fa}. 
The main result on growth envelopes in Besov spaces $\Bsg$ was obtained in \cite{Cae-env-h}.


\begin{theorem}
\label{below 2}
Let $0<p,q<\infty$, $\bm{\sigma}$ be admissible, $\Gamma$ be an
$h$-set satisfying the porosity condition. Assume 
\begin{equation}
-n \leq \lowind{h} \leq \upind{h} < 0, 
\label{T-1}
\end{equation}
\begin{equation}
\bm{\sigma}^{-1} \bm{h}^{-\frac1p} \not\in \ell_{q'} 
\label{T-2}
\end{equation}
and
\begin{equation}
\lowind{\sigma} > - \lowind{h} \left(\frac1p-1\right)_+\ .
\label{T-3}
\end{equation}
Let $\Psi_{r,u}$ be given by 
\eqref{Psi-ru}. Then
\begin{equation}
\envg \Bsg = (\Psi_{p,q'},q).
\label{gepair}
\end{equation}
\end{theorem}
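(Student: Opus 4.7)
The plan is to establish the growth envelope pair $(\Psi_{p,q'},q)$ by proving two pairs of upper/lower estimates: one for the envelope function $\egX(t)\sim \Psi_{p,q'}(t)$ and one for the fine index $\uGindex{\Bsg}=q$. The backbone of the argument is to transfer properties of the ambient space $\Bsih$ (via its atomic/wavelet decompositions and known envelope behaviour) down to $\Gamma$, using that $\mu(B(\gamma,2^{-j}))\sim h_j$ and that the porosity condition \eqref{ball-3} controls how traces of smooth atoms distribute on $\Gamma$. The hypothesis \eqref{T-1} ensures $|\Gamma|=0$ and gives the needed comparability between $\rn$-level cube counts touching $\Gamma$ and $\mu$-measures of their traces; \eqref{T-3} is the technical threshold that keeps $(\bm{\sigma},p)_{K,L}$-atoms for $\Bsih$ well-behaved at the trace level; and \eqref{T-2} is precisely what rules out $\Bsg\hookrightarrow L_\infty(\Gamma)$, so the envelope is non-trivial.

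For the upper bound $f^{\ast,\mu}(t)\lesssim \Psi_{p,q'}(t)\|f|\Bsg\|$, I would take $f\in\Bsg$ with $\|f|\Bsg\|\le 1$, choose an extension $g\in\Bsih$ with $\|g|\Bsih\|\le 2$ and $\tr{\Gamma}g=f$, and decompose $g=\sum_{j,m}\lambda_{j,m}a_{j,m}$ via Proposition \ref{atomicdecomposition}. Atoms for $\Bsih$ have sup-norm $\lesssim \sigma_j^{-1}h_j^{-1/p}$ on the cube $Q_{j,m}$, and only $\sim h_j^{-1}\mu(\Gamma)$ cubes at level $j$ meet $\Gamma$ (each intersection having $\mu$-measure $\sim h_j$). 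Summing the level-$j$ contributions to $(\tr{\Gamma}g)^{\ast,\mu}$ in the Lorentz-type scale dictated by $b_{p,q}$, using Hölder with exponent $q'$ in the level variable, yields the envelope function $\Psi_{p,q'}$ from formula \eqref{Psi-ru} through the correspondence \eqref{Lambda-sh} between the piecewise-constant $\Sisih(y^{-1})\sim \sigma_j$ on $[h_{j+1},h_j]$ and the sequence $\sigma_j^{-1}h_j^{-1/p}$. The analogous $\ell_q$-summation in the level variable yields the refined inequality \eqref{eq-envg} with $v=q$, giving $\uGindex{\Bsg}\le q$.

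For the lower bound I would construct extremal functions directly on $\Gamma$ via their liftings. Fix $j$ and pick a maximal $2^{-j}$-separated family $\{\gamma^{(j)}_k\}_{k=1}^{M_j}\subset\Gamma$ with $M_j\sim 1/h_j$ (possible because $\Gamma$ is an $h$-set). Using the porosity condition, place $(\bm{\sigma},p)_{K,L}$-atoms $a_{j,k}$ in the ambient $\rn$ concentrated near $\gamma^{(j)}_k$, scaled to height $\sigma_j^{-1}h_j^{-1/p}$, such that $a_{j,k}\raisebox{-0.4ex}[1.1ex][0.9ex]{$|_{\Gamma}$}$ has $\mu$-support of size $\sim h_j$ and is $\sim \sigma_j^{-1}h_j^{-1/p}$ there. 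Superpose across levels with weights $\{\beta_j\}$ and use the atomic characterisation to bound the $\Bsih$-norm by $\|\{\beta_j\sigma_j^{-1}h_j^{-1/p}\sigma_j h_j^{1/p}\}_j|\ell_q\|=\|\bm{\beta}|\ell_q\|$; the rearrangement of the trace is then bounded below by choosing $\bm{\beta}$ extremal for the dual pairing against the sequence $\{\sigma_j^{-1}h_j^{-1/p}\}_j$, which via Landau-type duality (Lemma \ref{lemma-Landau} with $q^\ast=q'$) gives exactly $\Psi_{p,q'}(t)$ and, tested against single-level configurations combined with Remark \ref{subseq-adm}, forces $\uGindex{\Bsg}\ge q$. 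Assumption \eqref{T-2} guarantees that these extremal choices are truly unbounded as $t\to 0$, so the envelope function is sharp.

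The principal obstacle will be the lower-bound construction: one must lift carefully chosen $\Gamma$-level configurations back to bona fide $(\bm{\sigma},p)_{K,L}$-atoms in $\Bsih$ whose traces respect the intended amplitudes and essential supports in $\mu$-measure, while controlling overlap between atoms centred at different $\gamma^{(j)}_k$. This is where porosity becomes indispensable: it provides the free space around $\Gamma$ needed to place auxiliary cancellations for the moment conditions on atoms without polluting neighbouring atoms' traces. Once this geometric lifting is in place, the identification of the correct summation exponent $q$ in the fine index follows by matching the $\ell_q$-structure of the atomic $b_{p,q}$-norm with the $\mu_G$-integral in \eqref{eq-envg}, using the explicit asymptotics of $\Psi_{p,q'}$ and a straightforward discretisation argument.
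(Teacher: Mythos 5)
First, a point of comparison: the paper does not prove Theorem~\ref{below 2} at all --- it is quoted from \cite{Cae-env-h} --- so there is no internal proof to match your outline against. Judged on its own merits, your architecture (upper bound by atomically decomposing an extension $g\in\Bsih$ of $f$, lower bound by explicit test functions plus Landau duality via Lemma~\ref{lemma-Landau}) is the right one, but your lower-bound construction as written fails. You place $M_j\sim h_j^{-1}$ atoms per level at a $2^{-j}$-separated family and claim the $\Bsih$-norm of the superposition is $\|\bm{\beta}|\ell_q\|$. It is not: the $\ell_p$-sum over the spatial index in $b_{p,q}$ contributes a factor $M_j^{1/p}\sim h_j^{-1/p}$ per level, so the norm is $\|(\beta_j h_j^{-1/p})_j|\ell_q\|$. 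Worse, with separated centres the atoms from different levels do not accumulate at a common point, so you lose the cumulative sum $\sum_{j\le k}\beta_j\sigma_j^{-1}h_j^{-1/p}$ that must appear in $f^{*,\mu}(c\,h_k)$ in order to produce $\Psi_{p,q'}(h_k)$ after optimising $\bm{\beta}$; with your configuration each point of $\Gamma$ sees essentially one level, and the resulting bound is far weaker than $\Psi_{p,q'}$. The correct extremal functions are single-point concentrations $\sum_j \beta_j\sigma_{j\iota_0}^{-1}h_{j\iota_0}^{-1/p}\phi(2^{j\iota_0}(\cdot-\gamma_0))$ for one fixed $\gamma_0\in\Gamma$ --- precisely the function $g^{\bm b^T}$ of \eqref{gbT} that this paper uses in Step~3 of the proof of Proposition~\ref{coro-1} and in Theorem~\ref{emb-BGamma-nec}: one atom per level gives the norm $\|\bm{\beta}|\ell_q\|$, and the nested supports give the cumulative lower bound on the rearrangement. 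Your multi-point family is the construction appropriate for entropy numbers, not for growth envelopes.

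Second, your assignment of roles to the hypotheses is partly wrong. You call porosity ``indispensable'' for the lower bound (to place cancellations for the moment conditions); the paper's Remarks~\ref{less hyp} and~\ref{GEgeq} state explicitly that porosity, $\lowind{h}\ge -n$ and \eqref{T-3} are \emph{not} needed for the lower estimate of the envelope function nor for the optimality of the index $q$ --- only $\upind{h}<0$ and the existence of the trace space are. Porosity and \eqref{T-1}, \eqref{T-3} enter on the other side, in identifying $\Bsg$ as a trace space and in the upper estimate. Finally, your upper-bound paragraph is too thin to verify: the decisive step is the splitting of the atomic sum at the level $J$ with $h_J\sim t$, using sup-norm bounds and H\"older in the level variable (which is where $q'$ and the discretised $\Psi_{p,q'}$ arise) for $j\le J$, and measure-of-support estimates for the tail $j>J$; none of this mechanism is visible in ``summing the level-$j$ contributions in the Lorentz-type scale''.
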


\begin{remark}\label{less hyp}
Observe that the hypotheses that $\Gamma$ satisfies the porosity condition,
$\lowind{\bm{h}} \geq -n$  and
\eqref{T-3} are not needed for the proof of the optimality of the exponent
$q$, as long as we assume that $\Bsg$ exists as the trace of $\Bsih$ in
$L_p(\Gamma)$. Moreover, (\ref{T-2}) ensures that $\Bsg$ is not contained in $L_\infty(\Gamma)$;
see also (\ref{B-C-1}) below.
\end{remark}

\begin{remark}\label{GEgeq}
In some applications it will be enough to use a sharp estimate from
below for the growth envelope function, in which case we can also
dispense with most of the hypotheses considered in the Theorem above,
as follows from \cite[Prop.~4.4]{Cae-env-h}: up to a multiplicative constant factor, $\Psi_{p,q'}$ from \eqref{gepair} is a lower bound for the local growth envelope function of $\Bsg$ near 0 as long as this space is assumed to exist as the trace of $\Bsih$ in $L_p(\Gamma)$ and the $h$-set $\Gamma$ satisfies the condition $\upind{\bm{h}} < 0$.
\end{remark}

\begin{example}\label{env-d-set}
If $\Gamma$ is a $d$-set, $h(r)=r^d$, $0<d< n$, and $0<p,q<\infty$,
$d\left(\frac1p-1\right)_+<s<\frac{d}{p}$, then
Theorem~\ref{below 2} reads as
\begin{align}
\envg \mathbb{B}^{s}_{p,q}(\Gamma) & = \left(t^{\frac{s}{d}-\frac1p}, q\right).\label{envg-d-1}
\intertext{In the limiting case $s=\frac{d}{p}$, now with
  $1<q<\infty$, we obtain}
\envg \mathbb{B}^{s}_{p,q}(\Gamma) & = \left(|\log t|^{1/q'}, q\right).\nonumber
\end{align}
\end{example}

\begin{remark}\label{R-Lp-Gamma}
Note that in this last example in case \eqref{envg-d-1} the growth envelope
function $\egX(t)\sim t^{\frac{s}{d}-\frac1p}$ does not distinguish between spaces
$\mathbb{B}^{s_1}_{p_1,q_1}(\Gamma)$ and
$\mathbb{B}^{s_2}_{p_2,q_2}(\Gamma)$ whenever
$s_1-\frac{d}{p_1}=s_2-\frac{d}{p_2}$, not even in case
of $s_1=s_2$, $p_1=p_2$, but $q_1\neq q_2$. Therefore the introduction of the
additional fine index is justified.
\end{remark}

\begin{remark}\label{R-env-d-sets}
We would like to emphasise the resemblance of the above results in
Example~\ref{env-d-set} with
those obtained for spaces $\B(\rn)$, see \cite[Thms.~13.2,
  15.2]{T-func}: apparently one just had to replace $n$ by $d$ to get the right
expressions in the context of $d$-sets. But, as already remarked in
\cite{Cae-env-h}, this obvious similarity has indeed some geometrical meaning, as it is
now easy to see that, for all possible relations between parameters given above, and for small positive $t$,
\beq
\egv{\mathbb{B}^{s}_{p,q}(\Gamma)}\left(t^d\right) \sim \egv{B^{s+\frac{n-d}{p}}_{p,q}(\rn)}\left(t^n\right),
\label{geom-env}
\eeq
with coincidence of the corresponding indices. In particular, the
argument on the left-hand side of \eqref{geom-env} represents the geometry of the $d$-set
$\Gamma$ on which the trace space is defined, whereas the argument on the
right-hand side corresponds to the geometry of the underlying $\rn$
for the space $B^{s+(n-d)/p}_{p,q}(\rn)$ from which the trace (on $\Gamma$)
is taken.
\end{remark}

\begin{example}\label{envg-dPsi}{For the convenience of the reader we briefly return to
  the study of $(d,\Psi)$-sets $\Gamma$ and corresponding trace spaces
  $\mathbb{B}^{(s,\Psi)}_{p,q}(\Gamma)$, recall
  Remark~\ref{BDef-d-sets}. In this case the assumptions in
  Theorem~\ref{below 2} are satisfied whenever $0<d<n$, $0<p,q<\infty$,
  $s>d(\frac1p-1)_+$, $\Psi$ is admissible and
\[
\left(2^{j(\frac{d}{p}-s)}
  \Psi(2^{-j})^{-\frac1p-1}\right)_j\notin\ell_{q'},
\]
that is, when
\[ \begin{cases} s < \frac{d}{p} &\quad  \text{or}
    \\ s=\frac{d}{p} & \text{and}\ 
    \left(\Psi(2^{-j})^{-\frac1p-1}\right)_j\notin\ell_{q'}.\end{cases}
\]
This simply follows from the setting $h_j \sim
2^{-jd}\Psi\left(2^{-j}\right)$ and
$\sigma_j=2^{js}\Psi\left(2^{-j}\right)$, $j\in\nat$, in this special
case of $(d,\Psi)$-sets and spaces of type
${B}^{(s,\Psi)}_{p,q}$. This implies $\Sisih
\left(2^{jd}\Psi(2^{-j})^{-1}\right)\sim 2^{js}\Psi(2^{-j}) $, in view of (\ref{Lambda-sh}), and straightforward calculations taking advantage of the admissibility of $\Psi$ --- cf. also Remark \ref{R-env-d-Psi-sets} below --- lead to
\beq
\envg \mathbb{B}^{(s,\Psi)}_{p,q}(\Gamma) = \left(t^{\frac{s}{d}-\frac1p}\Psi(t)^{-1-\frac{s}{d}}, q\right)
\label{eg-subcrit-psi}
\eeq
if $s < \frac{d}{p}$, and to 
{
\begin{align*}
\envg \mathbb{B}^{(s,\Psi)}_{p,q}(\Gamma)  = 
\left(\left(\int_{t^{1/d}}^1 \Psi(y)^{-q'\left(\frac1p+1\right)} \frac{\dint
  y}{y}\right)^{{1/q'}}, q\right)
\end{align*}
{if} $ s =
\frac{d}{p}$ and
$\ \left(\Psi(2^{-j})^{-\frac1p-1}\right)_j\notin\ell_{q'}$. In case
of $0<q\leq 1$, that is, $q'=\infty$, this last expression has to be understood as 
\begin{align}
\envg \mathbb{B}^{(s,\Psi)}_{p,q}(\Gamma)  = 
\left(\sup_{t^{1/d}\leq y \leq 1} \Psi(y)^{-\left(\frac1p+1\right)} , q\right),
\label{gecaseless1}
\end{align}
assuming now that
$\ \left(\Psi(2^{-j})^{-\frac1p-1}\right)_j\notin\ell_{\infty}$ and $ s =
\frac{d}{p}$. Actually, with this assumption, and recalling the definition of $\Psi$, the growth envelope function in \eqref{gecaseless1} can simply be written as $\Psi(t)^{-\left(\frac1p+1\right)}$.
}
}\end{example}

\begin{remark}\label{R-env-d-Psi-sets}
In the spirit of Remark~\ref{R-env-d-sets} we would like to compare the
outcome in Example~\ref{envg-dPsi} with its counterpart for growth
envelopes of spaces ${B}^{(s,\Psi)}_{p,q}(\rn)$ obtained in
\cite{C-DM,C-DM-2}. In the sub-critical case $n(\frac1p-1)_+<s<\frac{n}{p}$ that
result reads as
\[
\envg\left({B}^{(s,\Psi)}_{p,q}(\rn)\right) =
\left(t^{\frac{s}{n}-\frac1p} \Psi(t)^{-1},q\right),
\]
whereas in the limiting case $s=\frac{n}{p}$ with the additional
assumption 
$\left(\Psi(2^{-j})^{-1}\right)_j\not\in\ell_{q'}$ it is
\[
\envg\left({B}^{(s,\Psi)}_{p,q}(\rn)\right) =
\left( \tilde{\Phi}_{p,q'}(t),q\right)\qquad\text{with}\quad
\tilde{\Phi}_{p,q'}(t) = \left(\int_{t^{1/n}}^1 \Psi(y)^{-q'} \frac{\dint y}{y}\right)^{1/q'}
\]
(modified similarly as in \eqref{gecaseless1} when $q'=\infty$). We claim that
\beq
\egv{\mathbb{B}^{(s,\Psi)}_{p,q}(\Gamma)}\left(t^d\Psi(t)\right) \sim \egv{B^{(s+\frac{n-d}{p},\Psi^{1+\frac1p})}_{p,q}(\rn)}\left(t^n\right),
\label{geom-env-Psi}
\eeq
parallel to \eqref{geom-env} in case of $d$-sets (i.e., with
$\Psi\equiv 1$). 

We first deal with the sub-limiting case. Plainly, \eqref{eg-subcrit-psi} leads to
\begin{align*}
\egv{\mathbb{B}^{(s,\Psi)}_{p,q}(\Gamma)}\left(t^d\Psi(t)\right) &
\sim \left(t^d\Psi(t)\right)^{\frac{s}{d}-\frac1p}
\Psi\left(t^d\Psi(t)\right)^{-\frac{s}{d}-1} \ 
 \sim \ t^{s-\frac{d}{p}} \Psi(t)^{-\frac{1}{p}-1}\\[1ex]
& \sim (t^n)^{\frac1n(s+\frac{n-d}{p})-\frac1p}\ 
\Psi\left(t^n\right)^{-\frac{1}{p}-1}\  \sim \ \egv{B^{(s+\frac{n-d}{p},\Psi^{1+\frac1p})}_{p,q}(\rn)}\left(t^n\right),
\end{align*}
where we used twice the admissibility of $\Psi$, which implies 
$\Psi(t)\sim \Psi(t^a \Phi(t))$, $a>0$, $\Phi$ admissible, for small $t>0$. 
As to the critical case, the correspondence between the additional assumptions on $\Psi$ when
$s=\frac{d}{p}$ (on the side of the trace spaces on $\Gamma$) and when
$s=\frac{n}{p}$ (on the side of the corresponding spaces on $\rn$) are obvious, whereas \eqref{geom-env-Psi} follows again from the admissibility of $\Psi$.
\end{remark}

\section{Embeddings}
\label{sect-3}
%
\subsection{Sharp embeddings for spaces on $\rn$}
\label{sect-3-1}
First we study criteria for embeddings of spaces of type $\Bsi(\rn)$ and slightly extend corresponding results in \cite[Thm.~3.7]{C-Fa} with fore-runners in
\cite{moura-diss} and  \cite[Prop.~4.12]{bricchi-3}.

\begin{theorem}
\label{prop-Bsi-emb}
Let $\bm{\sigma}$ and $\bm{\tau}$ be two admissible sequences, $0<p_1, p_2 < \infty$, $0<q_1,q_2\leq\infty$, and $q^\ast$ be given by \eqref{q-ast}.
\bli
\item[{\upshape\bfseries (i)\hfill}]
The embedding
\beq\label{BB-Rn}
\id: B^{\bm{\sigma}}_{p_1,q_1}(\rn) \longrightarrow
B^{\bm{\tau}}_{p_2,q_2}(\rn)
\eeq
exists and is bounded if, and only if,
\begin{equation}
p_1\leq p_2\qquad\text{and}\qquad 
\bm{\sigma}^{-1}\bm{\tau}\paren{n}^{\frac{1}{p_1}-\frac{1}{p_2}}
\in \ell_{q^\ast}\ .  
\label{emb-nec}
\end{equation}
\item[{\upshape\bfseries (ii)\hfill}]
The embedding \eqref{BB-Rn} is never compact.
\eli
\end{theorem}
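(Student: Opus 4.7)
The plan is to transfer the whole problem to the corresponding sequence spaces via the wavelet characterisation in Theorem~\ref{wavesigma}, which reduces the embedding \eqref{BB-Rn} to an embedding between $b^{\bm{\sigma}\paren{n}}_{p_1,q_1}$ and $b^{\bm{\tau}\paren{n}}_{p_2,q_2}$ (plus a trivial $\ell_{p_1}\hookrightarrow\ell_{p_2}$ piece on the scaling coefficients, which already forces $p_1\leq p_2$). Once there, the admissibility of the sequences and Lemma~\ref{lemma-Landau} should do essentially all the work.

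For sufficiency, assume $p_1\le p_2$ and condition \eqref{emb-nec}. For fixed $j$, the monotonicity of $\ell_p$-norms yields
\[
\Big(\sum_{m\in\zn}|\lambda_{j,m}|^{p_2}\Big)^{1/p_2}\le \Big(\sum_{m\in\zn}|\lambda_{j,m}|^{p_1}\Big)^{1/p_1}.
\]
Multiplying by $\tau_j\,2^{jn(1-1/p_2)}$ and writing $\tau_j 2^{jn(1-1/p_2)} = \bigl(\sigma_j^{-1}\tau_j 2^{jn(1/p_1-1/p_2)}\bigr)\cdot\sigma_j 2^{jn(1-1/p_1)}$, I get a sequence $\bm{\alpha}=\bigl(\sigma_j^{-1}\tau_j 2^{jn(1/p_1-1/p_2)}\bigr)_j$ acting as a multiplier from the weighted $\ell_{q_1}$-space defining $\|f|B^{\bm\sigma}_{p_1,q_1}\|^\star$ to the one defining $\|f|B^{\bm\tau}_{p_2,q_2}\|^\star$; the `only if'-part of Lemma~\ref{lemma-Landau} (more precisely, the sufficiency direction for the multiplier) closes the estimate. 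The scaling-coefficient block is an elementary $\ell_{p_1}\hookrightarrow\ell_{p_2}$.

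For necessity I would use two families of test functions produced by the wavelet system. First, to force $p_1\le p_2$, fix a large scale $j$ and take $f_{j,M}=\sum_{m\in M}\psi^l_{j,m}$ with $|M|\to\infty$; the two wavelet norms scale as $\sigma_j 2^{jn(1-1/p_1)}|M|^{1/p_1}$ and $\tau_j 2^{jn(1-1/p_2)}|M|^{1/p_2}$, so $p_1>p_2$ yields unbounded quotients. Next, single wavelets $f_j=\psi^l_{j,0}$ produce the scalar inequality $\sigma_j^{-1}\tau_j 2^{jn(1/p_1-1/p_2)}\lesssim 1$, i.e.\ $\bm{\alpha}\in\ell_\infty$. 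To upgrade this to $\bm{\alpha}\in\ell_{q^\ast}$, I would plug a generic sequence $\bm{\beta}\in\ell_{q_1}$ into the expansion $f=\sum_j \beta_j\,\sigma_j^{-1}2^{-jn(1-1/p_1)}\psi^l_{j,0}$: this produces an element of $B^{\bm\sigma}_{p_1,q_1}$ whose image in $B^{\bm\tau}_{p_2,q_2}$ has norm proportional to $\|\bm{\alpha}\bm{\beta}\,|\,\ell_{q_2}\|$. Boundedness of the embedding for \emph{every} $\bm{\beta}\in\ell_{q_1}$ then triggers the `if'-direction of Lemma~\ref{lemma-Landau} and yields $\bm{\alpha}\in\ell_{q^\ast}$. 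The delicate point here is to keep careful track of the powers of $2^{jn}$ coming from the normalisation in $b^{\widetilde{\bm\sigma}}_{p,q}$; this bookkeeping is what I expect to be the main technical obstacle.

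For part~(ii), non-compactness follows from the usual translation trick: the functions $f_k=\psi^l_{j_0,m_k}$ with a fixed scale $j_0$ and a sequence $m_k\in\zn$ with $|m_k|\to\infty$ all have comparable (non-zero) norms in \emph{both} spaces, but converge weakly to zero and have pairwise disjoint supports, so no subsequence can converge in $B^{\bm\tau}_{p_2,q_2}$. Since the embedding is bounded (by part~(i)), this immediately rules out compactness.
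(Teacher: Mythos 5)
Your proof of part (i) is essentially the paper's own: the paper likewise transfers the problem to the sequence spaces $b^{\bm{\sigma}\paren{n}}_{p,q}$ via Theorem~\ref{wavesigma}, reads off $p_1\leq p_2$ from the $\ell_{p_1}\hookrightarrow\ell_{p_2}$ block of the wavelet isomorphism, and tests the embedding $b^{\widetilde{\bm{\sigma}}}_{p_1,q_1}\hookrightarrow b^{\widetilde{\bm{\tau}}}_{p_2,q_2}$ on a sequence concentrated at the single position $m=0$ and modulated by an arbitrary $\bm{\beta}\in\ell_{q_1}$, after which Lemma~\ref{lemma-Landau} yields \eqref{emb-nec}; your preliminary step with single wavelets (giving only $\bm{\alpha}\in\ell_\infty$) is redundant, since the lemma already covers $q^\ast=\infty$. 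The bookkeeping of powers of $2^{jn}$ that you flag as the main hazard only affects the normalising factor in the test sequence, which cancels in the ratio of the two weights and hence leaves the multiplier $\bm{\alpha}=\bm{\sigma}^{-1}\bm{\tau}\paren{n}^{1/p_1-1/p_2}$ untouched. The genuine differences are two. First, for sufficiency the paper simply cites \cite[Thm.~3.7]{C-Fa}, whereas you reprove it inside the sequence spaces via the monotonicity of the $\ell_p$-scale in $m$ and the H\"older (`only if') half of Lemma~\ref{lemma-Landau}; this is correct and makes the argument self-contained, provided the wavelet system is chosen with regularity high enough to serve both spaces simultaneously (as the paper also notes). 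Second, for part (ii) the paper argues by contradiction through classical spaces, choosing $s>\upind{\sigma}$ and $t<\lowind{\tau}$ so that $B^{s}_{p_1,q_1}(\rn)\hookrightarrow B^{\bm{\sigma}}_{p_1,q_1}(\rn)$ and $B^{\bm{\tau}}_{p_2,q_2}(\rn)\hookrightarrow B^{t}_{p_2,q_2}(\rn)$, and invokes the known non-compactness of $B^{s}_{p_1,q_1}(\rn)\hookrightarrow B^{t}_{p_2,q_2}(\rn)$; your translated-wavelet family at a fixed scale is an equally valid and more elementary route, but since $p_2<1$ is admitted you should close it via the uniform separation $\left\| f_k-f_{k'}\,|\,B^{\bm{\tau}}_{p_2,q_2}(\rn)\right\|\geq c>0$ coming from the disjointly located coefficients, or via convergence to $0$ in $\mathcal{S}'(\rn)$, rather than via weak convergence in a quasi-Banach space.
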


\begin{proof}
We begin with (i). The sufficiency of 
\eqref{emb-nec} for the existence and boundedness of  the embedding \eqref{BB-Rn} is covered by
\cite[Thm.~3.7]{C-Fa} already. So it remains to show the necessity.
Here we use the wavelet characterisation in Theorem~\ref{wavesigma},
see also Remark~\ref{waverem}, and assume to choose the wavelet system in such a way
that it is applicable for both spaces simultaneously. Hence the existence and continuity of the embedding \eqref{BB-Rn} implies
that $\ell_{p_1}\hookrightarrow \ell_{p_2}$ and 
\beq\label{bb-Rn}
b^{\widetilde{\bm{\sigma}}}_{p_1,q_1}  \hookrightarrow b^{\widetilde{\bm{\tau}}}_{p_2,q_2} .
\eeq
Obviously, the first embedding immediately yields $p_1\leq p_2$. As
for \eqref{bb-Rn}, 
note first that
$\widetilde{\bm{\sigma}}^{-1}\widetilde{\bm{\tau}}\paren{n}^{\frac{1}{p_1}-\frac{1}{p_2}}
= \bm{\sigma}^{-1}\bm{\tau}\paren{n}^{\frac{1}{p_1}-\frac{1}{p_2}}$. 
We consider the sequence $\bm{\lambda} =\{\lambda_{j,m}\}_{j\in\no,m\in\zn}$, defined by 
\[
\lambda_{j,m}=\begin{cases} \sigma_{j}^{-1} 2^{j n(\frac{1}{p_1}-\frac12)} \beta_j, & m=0, \ j\in\no, \\ 0, & \text{otherwise},\end{cases}
\]
where $\bm{\beta}\in\ell_{q_1}$ is arbitrary. Then by straightforward
calculation,
\[
\|\bm{\lambda}| b^{\widetilde{\bm{\sigma}}}_{p_1,q_1}\| =  \left\|\bm{\beta} | \ell_{q_1}\right\|
\qquad \text{and} \qquad 
\|\bm{\lambda}| b^{\widetilde{\bm{\tau}}}_{p_2,q_2}\| = \left\|\bm{\tau} \bm{\sigma}^{-1} \paren{n}^{\frac{1}{p_1}-\frac{1}{p_2}} \bm{\beta} | \ell_{q_2}\right\|.
\]
Application of Lemma~\ref{lemma-Landau} with $\bm{\alpha}=\bm{\tau} \bm{\sigma}^{-1} \paren{n}^{\frac{1}{p_1}-\frac{1}{p_2}}$ leads to the desired result \eqref{emb-nec}.\\

We deal with (ii) and benefit from the fact that embeddings between classical spaces of type $\B(\rn)$ are never compact independently of the choice of the parameters. We proceed by contradiction. Assume that \eqref{BB-Rn} was compact. Choose $s>\upind{\sigma}$ and $t<\lowind{\tau}$, such that $B^{s}_{p_1,q_1}(\rn) \hookrightarrow B^{\bm{\sigma}}_{p_1,q_1}(\rn) $ and $B^{\bm{\tau}}_{p_2,q_2}(\rn)\hookrightarrow B^{t}_{p_2,q_2}(\rn)$ in view of (i) and Remark~\ref{R-Boyd}, in particular \eqref{estimate}. Hence we obtain the compactness of $B^{s}_{p_1,q_1}(\rn) \hookrightarrow B^{t}_{p_2,q_2}(\rn)$ which can never happen. So we have proved (ii).
\end{proof}

\begin{remark}\label{rem-HaSM-2}{
Note that there is an extension 
in \cite{Ha-SM-2} to spaces of type $B^{\bm{\sigma},
  \bm{N}}_{p,q}(\rn)$. 
}\end{remark}

\begin{example}\label{sharp-BPsi}
We return to the setting of $(d,\Psi)$-sets, $0<d<n$,
  $\Psi$ admissible, as described in Example~\ref{ex-h-fnt}, in
  particular in \eqref{hPsi}, and the corresponding spaces
  $B^{(s,\Psi)}_{p,q}(\rn)$ discussed in Remark~\ref{R-Bsigma}, with $ \bm{\sigma}
= (2^{js} \Psi(2^{-j}))_j $, $ s\in\real$, $0<p,q\leq\infty$, studied in
\cite{moura, moura-diss} in detail. Let $s_i\in\real$, $\Psi_i$
admissible, $0<p_i<\infty$, $0<q_i\leq\infty$, $i=1,2$. Using the well-known
abbreviation for the difference of the corresponding differential
smoothnesses,
\[
\delta= s_1-\frac{n}{p_1}-\left(s_2-\frac{n}{p_2}\right),
\]
Theorem~\ref{prop-Bsi-emb}(i) reads as 
\begin{align*}
B^{(s_1,\Psi_1)}_{p_1,q_1}(\rn) \ & \hookrightarrow
B^{(s_2,\Psi_2)}_{p_2,q_2}(\rn) 
\intertext{if, and only if,}
p_1\leq p_2\qquad & \mbox{and}\qquad 
\left(2^{-j\delta}\frac{\Psi_2\left(2^{-j}\right)}{\Psi_1\left(2^{-j}\right)}\right)_j\in\ell_{q\ast}.
\end{align*}
This coincides (in the limiting case $\delta=0$) with our earlier result \cite[Prop.~4.3]{CaH-2} which
already refined an outcome of
{Moura} in \cite[Prop.~1.1.13]{moura-diss}.
\end{example}

For matter of comparison we recall the result for target spaces $L_p$, see
\cite[Cor.~3.18]{C-Fa} and the slightly weaker version
\cite[Prop.~4.13]{bricchi-3}.  Here we rely on the recently obtained
characterisation (in the more general setting of spaces $B^{\bm{\sigma},\bm{N}}_{p,q}(\rn)$) in \cite[Thm.~4.3, Cor.~4.6(i)]{Ca-L-Lloc}.

\begin{proposition}\label{sharp-B-Lloc}
Let $0<p<\infty$, $0<q\leq\infty$, $\bm{\sigma}$ be admissible. Then
\begin{align*}
&\Bsi(\rn)  \ \hookrightarrow \ L_{\max(p,1)}(\rn)
\intertext{if, and   only if,}
&\begin{cases}
\bm{\sigma}^{-1}\paren{n}^{\left(\frac1p-1\right)} \in \ell_{q'},
&\text{if}\quad  
0<p\leq 1, \ 0<q\leq\infty,\\
\bm{\sigma}^{-1}\in\ell_\infty, & \text{if}\quad  1<p<\infty, \ 0<q\leq\min(p,2),\\
\bm{\sigma}^{-1}\in\ell_{\frac{pq}{q-p}}, &\text{if}\quad  1<p\leq 2, \ \min(p,2)<q\leq\infty,\\
\bm{\sigma}^{-1}\in\ell_{\frac{2q}{q-2}}, &\text{if}\quad  2<p<\infty, \ \min(p,2)<q\leq\infty.\end{cases}
\end{align*}
\end{proposition}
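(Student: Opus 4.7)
The plan is to deduce the proposition as a specialisation of the wider characterisation in \cite[Thm.~4.3, Cor.~4.6(i)]{Ca-L-Lloc} for the scale $B^{\bm{\sigma},\bm{N}}_{p,q}(\rn)$: choosing $\bm{N}=\paren{1}$ recovers $\Bsi(\rn)$, and the four sub-cases emerge by tracking the appearance of $\min(p,2)$ in the relevant $\ell_{q^\ast}$ exponents. Alternatively, and more instructively, one can assemble a self-contained proof by combining Theorem~\ref{prop-Bsi-emb} with two classical base embeddings into Lebesgue targets.

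The first base embedding is the Littlewood--Paley inclusion $B^0_{p,\min(p,2)}(\rn)\hookrightarrow L_p(\rn)$, valid for $1<p<\infty$, which follows from the chain $B^0_{p,\min(p,2)}\hookrightarrow F^0_{p,2}=L_p$ via the standard relation between Besov and Triebel--Lizorkin scales. The second is the sharp Sobolev-type embedding $B^{\paren{n(1/p-1)}}_{p,1}(\rn)\hookrightarrow L_1(\rn)$ for $0<p\leq 1$, which can be read off directly from the atomic decomposition of Proposition~\ref{atomicdecomposition}, since each $(\paren{n(1/p-1)},p)_{K,L}$-atom has $L_1$-norm controlled by an absolute constant and the moment conditions are free of charge when targeting $L_1$.

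For the sufficiency I would chain these base embeddings with Theorem~\ref{prop-Bsi-emb}. For $1<p<\infty$, applying that theorem with $(p_1,q_1)=(p,q)$, $(p_2,q_2)=(p,\min(p,2))$ and $\bm{\tau}=(1)_j$ yields $\Bsi(\rn)\hookrightarrow B^0_{p,\min(p,2)}(\rn)$ precisely when $\bm{\sigma}^{-1}\in\ell_{q^\ast}$, where
\[
\frac{1}{q^\ast}=\left(\frac{1}{\min(p,2)}-\frac{1}{q}\right)_+.
\]
An elementary case analysis according to whether $q\leq\min(p,2)$ or $q>\min(p,2)$, combined with whether $p\leq 2$ or $p>2$, reproduces cases 2--4 exactly. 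For $0<p\leq 1$ the analogous choice $(p_2,q_2)=(p,1)$ with $\bm{\tau}=\paren{n(1/p-1)}$ gives $\bm{\sigma}^{-1}\paren{n(1/p-1)}\in\ell_{q'}$, which is case 1.

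For the necessity I would follow the strategy of the proof of Theorem~\ref{prop-Bsi-emb}(i): via the wavelet characterisation (Theorem~\ref{wavesigma}) the assumed embedding $\Bsi(\rn)\hookrightarrow L_{\max(p,1)}(\rn)$ descends to a sequence-space embedding, and testing with concentrated sequences of the form $\lambda_{j,0}=\sigma_j^{-1}2^{jn(1/p-1/2)}\beta_j$ (with $\bm{\beta}$ running through $\ell_{q}$) together with Lemma~\ref{lemma-Landau} forces the claimed $\ell_{q'}$, $\ell_{pq/(q-p)}$ or $\ell_{2q/(q-2)}$ membership of $\bm{\sigma}^{-1}$. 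The main obstacle is the sharpness of the base Littlewood--Paley inclusions $B^0_{p,\min(p,2)}\hookrightarrow L_p$ and $B^{\paren{n(1/p-1)}}_{p,1}\hookrightarrow L_1$: once these two qualitative thresholds are accepted, the rest of the argument is essentially bookkeeping with Theorem~\ref{prop-Bsi-emb} and Lemma~\ref{lemma-Landau}; without them the necessity and sufficiency exponents would not meet.
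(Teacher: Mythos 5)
Your first route is exactly what the paper does: Proposition~\ref{sharp-B-Lloc} is stated there without proof, purely by citation to \cite[Thm.~4.3, Cor.~4.6(i)]{Ca-L-Lloc}, so specialising that characterisation is the intended argument and needs no further justification. Your self-contained alternative is a genuinely different (and more informative) route, and its \emph{sufficiency} half is sound: the two base embeddings $B^0_{p,\min(p,2)}(\rn)\hookrightarrow L_p(\rn)$ for $1<p<\infty$ and $B^{\paren{n(1/p-1)}}_{p,1}(\rn)\hookrightarrow L_1(\rn)$ for $0<p\leq 1$ are classical, and your bookkeeping with Theorem~\ref{prop-Bsi-emb} and the exponent $\frac{1}{q^\ast}=\bigl(\frac{1}{\min(p,2)}-\frac1q\bigr)_+$ reproduces all four cases correctly.

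The necessity half of the self-contained route has a genuine gap, concentrated in the case $2<p<\infty$, $q>2$. First, Theorem~\ref{wavesigma} characterises $\Bsi(\rn)$ but not the target: $L_p(\rn)$ for $1<p<\infty$ corresponds to a Triebel--Lizorkin sequence space $f^0_{p,2}$, and $L_1(\rn)$ to none at all, so the embedding does not ``descend to a sequence-space embedding'' of Besov type; you must estimate $\|f\|_{L_{\max(p,1)}}$ from below on the function side. Second, and more seriously, the concentrated test family $\lambda_{j,0}=\sigma_j^{-1}2^{jn(1/p-1/2)}\beta_j$ produces a function whose square function is supported in nested annuli around the origin, and the resulting lower bound for $\|f\|_{L_p}$ is only $\|\bm{\sigma}^{-1}\bm{\beta}\,|\,\ell_p\|$ (the level-$j$ term dominates at $|x|\sim 2^{-j}$). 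Via Lemma~\ref{lemma-Landau} this forces $\bm{\sigma}^{-1}\in\ell_{q^\ast}$ with $\frac{1}{q^\ast}=\bigl(\frac1p-\frac1q\bigr)_+$, which for $p>2$ is strictly \emph{weaker} than the claimed $\bm{\sigma}^{-1}\in\ell_{2q/(q-2)}$, since $\ell_{2q/(q-2)}\subsetneq\ell_{pq/(q-p)}$ there. To reach the $\ell_2$-based exponent one needs a second test family in which the $L_p$-norm is comparable to the $\ell_2$-norm of the coefficients across levels --- e.g.\ randomised or lacunary sums of wavelets spread over a fixed cube, where Khintchine's inequality turns $\|f\|_{L_p}$ into a square function that genuinely accumulates over $j$. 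This is the substantive content of the cited result in \cite{Ca-L-Lloc} and is not recoverable by ``bookkeeping'' from the single test you propose. (For the case $0<p\leq 1$ the target $L_1$ also requires disjointly supported, non-oscillating bumps rather than wavelets stacked at $m=0$, since cancellation between scales destroys the lower bound; with that modification case 1 does go through.)
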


We come to the target space $L_\infty(\rn)$ now.

\begin{proposition}
\label{Bsi-C}
Let $\bm{\sigma}$ be an admissible sequence, $0<p,q\leq\infty$. Then
\begin{equation*}
\Bsi(\rn) \hookrightarrow C(\rn) \quad \mbox{if, and only if,}\quad
\bm{\sigma}^{-1} \paren{n}^{\frac1p}  \in \ell_{q'}\ ,
\end{equation*}
where $C(\rn)$ can be replaced by $L_\infty(\rn)$.
\end{proposition}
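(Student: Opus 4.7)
I argue both implications; the interchange between $L_\infty(\rn)$ and $C(\rn)$ in the statement is then automatic, because sufficiency will in fact yield $\Bsi(\rn)\hookrightarrow C(\rn)$ while necessity uses only the weaker hypothesis $\Bsi(\rn)\hookrightarrow L_\infty(\rn)$.

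For sufficiency I work directly from the Fourier-analytic definition. Decompose $f=\sum_j f_j$ with $f_j=\mathcal F^{-1}\varphi_j\mathcal F f$; each $f_j$ is an entire function of exponential type $\sim 2^j$, so the Plancherel--Polya--Nikolski inequality delivers $\|f_j|L_\infty(\rn)\|\lesssim 2^{jn/p}\|f_j|L_p(\rn)\|$ (interpreted trivially when $p=\infty$). H\"older's inequality in the conjugate pair $(q,q')$, or the trivial embedding $\ell_q\hookrightarrow\ell_1$ when $q\le 1$ (so $q'=\infty$), then yields
\[
\|f|L_\infty(\rn)\|\ \le\ \sum_j\|f_j|L_\infty(\rn)\|\ \lesssim\ \big\|\bm\sigma^{-1}\paren{n}^{1/p}\,\big|\,\ell_{q'}\big\|\cdot\|f|\Bsi(\rn)\|.
\]
The same estimate applied to tails forces uniform convergence of the series; since every $f_j$ is continuous, so is $f$.

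For necessity, assume $\Bsi(\rn)\hookrightarrow L_\infty(\rn)$ and invoke the atomic decomposition of Proposition~\ref{atomicdecomposition}. Fix a $C^\infty$ bump $a_0$ supported in a small cube around the origin, with vanishing moments up to the required order $L$ and with $a_0(0)\neq 0$ (obtained, e.g., by correcting a Gaussian with a finite linear combination of its dilates to annihilate the finitely many low-order moments). Then the dilations
\[
a_{j,0}(x)\,:=\,\sigma_j^{-1}\,2^{jn/p}\,a_0(2^j x)
\]
are, after normalising by a fixed constant, $(\bm\sigma,p)_{K,L}$-atoms supported in a cube of the form $cQ_{j,0}$, with $a_{j,0}(0)\sim\sigma_j^{-1}2^{jn/p}$. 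For any non-negative finitely supported $(c_j)$, the finite sum $f_N=\sum_{j\le N}c_j\,a_{j,0}$ belongs to $\Bsi(\rn)$ with $\|f_N|\Bsi(\rn)\|\lesssim\|(c_j)|\ell_q\|$ by that proposition, and $|f_N(0)|\sim\sum_j c_j\sigma_j^{-1}2^{jn/p}$. The assumed embedding therefore gives
\[
\sum_j c_j\,\sigma_j^{-1}2^{jn/p}\ \lesssim\ \|(c_j)|\ell_q\|\qquad\text{uniformly in }N\text{ and in }(c_j)\ge 0,
\]
and Lemma~\ref{lemma-Landau} (with $q_1=q$, $q_2=1$, so $q^\ast=q'$, and $\bm\alpha=\bm\sigma^{-1}\paren{n}^{1/p}$) yields $\bm\sigma^{-1}\paren{n}^{1/p}\in\ell_{q'}$.

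The only real technical point is constructing $a_0$ with $a_0(0)\neq 0$ in spite of the required moment cancellations (which rule out a non-negative choice); this is a standard finite-dimensional linear-algebra adjustment. Everything else is routine bookkeeping.
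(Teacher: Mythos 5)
Your argument is correct, but note that the paper itself gives no proof of Proposition~\ref{Bsi-C}: it simply cites \cite[Cors.~3.10, 4.9]{C-Fa}. So your self-contained two-sided argument is necessarily a different route. Both halves hold up. For sufficiency, the Littlewood--Paley decomposition plus the Plancherel--Polya--Nikolskii inequality $\|f_j|L_\infty\|\lesssim 2^{jn/p}\|f_j|L_p\|$ and H\"older in $(q,q')$ (resp.\ $\ell_q\hookrightarrow\ell_1$ when $q\le 1$) is exactly the standard mechanism behind the cited result, and the tail estimate does give uniform convergence, hence continuity. For necessity, your construction --- a single $(\bm{\sigma},p)_{K,L}$-atom per level, all stacked at the origin with non-negative coefficients so that no cancellation occurs at $x=0$, followed by Lemma~\ref{lemma-Landau} with $q_1=q$, $q_2=1$ --- is precisely the template the authors themselves use elsewhere (Step~3 of the proof of Proposition~\ref{coro-1} and Step~1 of Theorem~\ref{emb-BGamma-nec} build the analogous test functions $g^{\bm{b}^T}$ from moment-cancelling bumps and then invoke Lemma~\ref{lemma-Landau}), so your proof is very much in the spirit of the paper even though the paper outsources this particular statement. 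Two small points you correctly flag or should keep in mind: the existence of a compactly supported $C^\infty$ function with $a_0(0)\neq 0$ and vanishing moments up to order $L$ is a genuine (if routine) linear-algebra step, since non-negative bumps are excluded; and the passage from the uniform bound for finitely supported $(c_j)\ge 0$ to all of $\ell_q$ (needed to apply the ``if''-part of Lemma~\ref{lemma-Landau}) is harmless because one may replace $\bm{\beta}$ by $|\bm{\beta}|$ and use monotone convergence. The case $p=\infty$ in the necessity part requires the obvious $\ell_\infty$-modification of the sequence space $b_{p,q}$, but the single-atom-per-level construction is unaffected.
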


This characterisation was proved in \cite[Cors.~3.10, 4.9]{C-Fa} in
full generality; it has a fore-runner in \cite{Kal-5}, restricted to
$1<p,q<\infty$. In case of spaces $B^{(s,\Psi)}_{p,q}(\rn)$, that is, when $ \bm{\sigma}
= (2^{js} \Psi(2^{-j}))_j $, $ s\in\real$, the result was already obtained in \cite[Prop.~3.11]{C-DM-2}.

\begin{remark}\label{include-infty}{
The case $p=\infty$ missing in Proposition~\ref{sharp-B-Lloc} can be
read from Proposition~\ref{Bsi-C} with $p=\infty$.
}\end{remark}

\subsection{Sharp embeddings for spaces on $\Gamma$}
\label{sect-3-2}

We now study embedding assertions of type 
$$\Bsg \hookrightarrow
L_r(\Gamma)\qquad \mbox{and}\qquad \Bsge \hookrightarrow \Btgz\ .$$ 
In contrast to previous
considerations in \cite{bricchi-diss}, see also \cite{bricchi-5}, we
are especially interested in `sharp' embeddings, that is, to give
necessary and sufficient conditions for the existence of such
embeddings, whereas in the above-mentioned papers the focus was rather
on the compactness of related embeddings.  \\

First we collect what is 
already known. Note that whenever $ 0<p, q<\infty$, $\bm{\sigma}$ is
an admissible sequence, and $\Gamma$ is an
$h$-set, then
\[   \Bsg \hookrightarrow L_{\max(p,1)}(\Gamma)
\qquad\text{if}\qquad
\bm{\sigma}^{-1}\bm{h}^{-\left(\frac1p-1\right)_+}\in\ell_{q'}\ ,
\]
see \cite[Prop.~3.9]{Cae-env-h}. We can sharpen this assertion below
in Corollary~\ref{C-Bgamma-Lloc}. 
Moreover, in \cite[Prop.~3.11, Cor.~4.5]{Cae-env-h} there is also a complete characterisation of embeddings in $L_\infty(\Gamma)$: 

\begin{corollary}
\label{nec for C}
Let $0<p,q < \infty$ and $\bm{\sigma}$ be an admissible sequence. Let $\Gamma$
be an $h$-set with $\upind{\bm{h}}<0$  and assume there exists the space $\Bsg$ as
the trace of $\Bsih$ in $L_p(\Gamma)$.  Then
\beq\label{B-C-1}
\Bsg \hookrightarrow L_\infty(\Gamma)\quad\text{if, and only if,}\quad \bm{\sigma}^{-1}
\bm{h}^{-\frac{1}{p}} \in \ell_{q'},
\eeq
where $L_\infty(\Gamma)$ can be replaced by $C(\Gamma)$.
\end{corollary}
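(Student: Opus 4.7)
The plan is to derive this corollary directly from the trace definition of $\Bsg$ together with the sharp $\rn$-level $L_\infty$-embedding (Proposition~\ref{Bsi-C}) for sufficiency, and from the growth envelope lower bound (Remark~\ref{GEgeq}) for necessity.

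For the ``if'' direction, assume $\bm{\sigma}^{-1}\bm{h}^{-1/p}\in\ell_{q'}$. The key observation is that the space $\Bsih$ whose trace defines $\Bsg$ has smoothness sequence $\widetilde{\bm{\sigma}}:=\bm{\sigma}\bm{h}^{1/p}\paren{n}^{1/p}$, so the condition in Proposition~\ref{Bsi-C} becomes $\widetilde{\bm{\sigma}}^{-1}\paren{n}^{1/p}=\bm{\sigma}^{-1}\bm{h}^{-1/p}\in\ell_{q'}$, which is exactly our hypothesis. Hence $\Bsih\hookrightarrow C(\rn)$. Given $f\in\Bsg$, pick a representative $g\in\Bsih$ with $\tr{\Gamma}g=f$ and $\|g|\Bsih\|\leq 2\|f|\Bsg\|$. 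Since $p,q<\infty$, $\mathcal{S}(\rn)$ is dense in $\Bsih$; approximating $g$ by $\varphi_k\in\mathcal{S}(\rn)$, the uniform convergence $\varphi_k\to g$ coming from $\Bsih\hookrightarrow C(\rn)$ forces $\tr{\Gamma}g=g|_\Gamma$ $\mu$-a.e., and one concludes $\|f|L_\infty(\Gamma)\|\leq\|g|L_\infty(\rn)\|\lesssim\|g|\Bsih\|\lesssim\|f|\Bsg\|$. Because $g|_\Gamma\in C(\Gamma)$, the replacement by $C(\Gamma)$ is automatic.

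For the ``only if'' direction, suppose $\Bsg\hookrightarrow L_\infty(\Gamma)$. Then $\egv{\Bsg}$ is bounded near $0$. Under the standing hypotheses ($\upind{\bm{h}}<0$ and existence of $\Bsg$ as the trace of $\Bsih$ in $L_p(\Gamma)$), Remark~\ref{GEgeq} gives that $\Psi_{p,q'}$ is, up to a constant, a lower bound for $\egv{\Bsg}$ near $0$. So $\Psi_{p,q'}$ is bounded. Using $\upind{\bm{h}}<0$ (and hence $h_{j+1}\leq\kappa_0^{-1}h_j$ for $j\geq j_0$, cf.\ Remark~\ref{strong-iso-index}) together with $\Sisih(y^{-1})\sim\sigma_j$ on $y\in[h_{j+1},h_j]$ from \eqref{Lambda-sh}, one discretises the defining integral of \eqref{Psi-ru} to obtain
\[
\Psi_{p,q'}(h_m)^{q'}\ \sim\ \sum_{j=j_0}^{m-1}\sigma_j^{-q'}\,h_j^{-q'/p},
\]
with the usual sup-version of \eqref{Psi-ru'} replacing the sum when $q'=\infty$. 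Boundedness of this quantity as $m\to\infty$ is the condition $\bm{\sigma}^{-1}\bm{h}^{-1/p}\in\ell_{q'}$.

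The main technical obstacle is the discretisation step, which requires carefully converting the integral $\int_{h_{j+1}}^{h_j}y^{-q'/p-1}\,\dint y$ into a multiple of $h_j^{-q'/p}$; this uses the geometric decay of $\bm{h}$ forced by $\upind{\bm{h}}<0$ to ensure the two endpoint contributions are comparable. The sufficiency part has a minor subtlety in identifying the completion-defined trace with the pointwise restriction of the continuous representative, but this is standard given the density of $\mathcal{S}(\rn)$ in $\Bsih$.
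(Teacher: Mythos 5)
Your proposal is correct and follows essentially the route the paper intends: the paper states this corollary without proof, citing \cite[Prop.~3.11, Cor.~4.5]{Cae-env-h}, and your reconstruction -- sufficiency via Proposition~\ref{Bsi-C} applied to $\widetilde{\bm{\sigma}}=\bm{\sigma}\bm{h}^{1/p}\paren{n}^{1/p}$ together with the identification of the completion-trace with the pointwise restriction of the continuous representative, and necessity via the boundedness of the growth envelope function combined with the lower bound of Remark~\ref{GEgeq} and the discretisation of $\Psi_{p,q'}$ -- is exactly the machinery that citation rests on. The discretisation step is justified as you say by $\upind{\bm{h}}<0$ (geometric decay of $\bm{h}$) together with \eqref{Lambda-sh}, so no gap remains.
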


\begin{remark}\label{R-iff-Linfty}{
We would like to mention that for the `if\,'-part in \eqref{B-C-1} we
do not need to assume a priori the existence of $\Bsg$ (this comes as
a consequence then). For that part we also do not need the assumption $\upind{\bm{h}}<0$.
}\end{remark}

\begin{example}\label{B-C-dsets}
For $d$-sets $\Gamma$ and the special choice $\bm{\sigma} = \paren{s}$ we obtain $\mathbb{B}^s_{p,q}(\Gamma)
\hookrightarrow C(\Gamma)\ $ if, and only if, $s>\frac{d}{p}$, or (in the limiting case) $s=\frac{d}{p}$ and $0<q\leq 1$; this is in
perfect coincidence with the results in \cite[Thm.~20.6, Sect.~21.1]{T-Frac},
and compares well with the classical situation on $\rn$, see
\cite[Thm.~3.3.1(ii)]{SiT}.
\end{example}

Now we return to the problem of `sharp' embeddings into spaces $L_r$
with $r<\infty$. First we deal with the question what happens if we replace the target space
$L_p$ in \eqref{trace-def} by some possibly larger or smaller space
$L_r$. More precisely, we study the boundedness of the trace operator
\beq\label{tr-bdd-r<p}
\tr{\Gamma} : \Bsih \to L_r(\Gamma),\quad f \mapsto \tr{\Gamma} f,
\eeq
defined first pointwise for smooth functions
$\varphi\in\mathcal{S}(\rn)$ and then extended by completion in such
spaces where $\mathcal{S}(\rn)$ is dense. Therefore we require
$\max(p,q)<\infty$ again. We recall that this trace is well-defined if we
assume \eqref{trace-def} to hold (with $\bm{\tau} =\bm{\sigma}
\bm{h}^{1/p} \paren{n}^{1/p}$ and $L_p$ replaced by $L_r$). It will
turn out that, in all cases we shall study, \eqref{tr-bdd-r<p} exists
and is continuous if, and only if, the trace space $\Bsg$ exists and 
\beq\label{emb-Bsg-Lr}
\Bsg \hookrightarrow L_r(\Gamma)
\eeq
holds. We begin with the case $r\leq p$.

\begin{proposition}
Let $\Gamma$ be an $h$-set satisfying the porosity condition, $\bm{\sigma}$ be an
admissible sequence and either

\begin{description}
	\item[(a)] $1 \leq p < \infty$, $\, 0<q<\infty$, $\, 0<r \leq p \;\;$ or
	\item[(b)] $0<q,r \leq p < 1$.
\end{description}

Then
\begin{equation}
\tr{\Gamma}: \Bsih \ \longrightarrow \ L_r(\Gamma)
\label{tracer<p}
\end{equation} 
exists and is bounded if, and only if, 
$$ 
\bm{\sigma}^{-1} \in \ell_{q'}. $$
\label{ex-trace}
\end{proposition}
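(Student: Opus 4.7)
The plan is to treat the two implications separately.

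\textbf{Sufficiency.} Under $\bm{\sigma}^{-1}\in\ell_{q'}$, I would apply Theorem~\ref{prop-Bsi-emb} to obtain the embedding
\[
\Bsih \ \hookrightarrow\ B^{\bm{h}^{1/p}\paren{n}^{1/p}}_{p,q_0}\!(\rn),\qquad q_0:=\min(p,1).
\]
Indeed, the criterion reduces to $\bm{\sigma}^{-1}\in\ell_{q^\ast}$ with $1/q^\ast=(1/q_0-1/q)_+$, and direct inspection shows $q^\ast=q'$ in both case~(a) (where $q_0=1$, so $1/q^\ast=(1-1/q)_+=1/q'$) and case~(b) (where $q\le p<1$ forces $q^\ast=\infty=q'$). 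Composing with Bricchi's trace result \eqref{trace-Lp} gives $\tr{\Gamma}\colon\Bsih\to L_p(\Gamma)$ bounded, and finally H\"older's inequality, valid since $\mu(\Gamma)<\infty$ and $r\le p$, supplies $L_p(\Gamma)\hookrightarrow L_r(\Gamma)$.

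\textbf{Necessity.} Here the plan is to build, for every $j\in\no$, a Schwartz bump $\varphi_j$ that is identically $1$ on $\Gamma$ and whose $\Bsih$-norm is sharply controlled by $\sigma_j$. Cover $\Gamma$ by $K_j\sim h(2^{-j})^{-1}$ balls $B(\gamma_{j,\ell},c_1 2^{-j})$ centred on $\Gamma$ (a Vitali-type consequence of the $h$-set property together with the doubling condition \eqref{doubling}), and let $\varphi_j=\sum_{\ell=1}^{K_j}\eta_{j,\ell}$ be a smooth partition of unity with each $\eta_{j,\ell}$ supported in $B(\gamma_{j,\ell},c_2 2^{-j})$, identically $1$ on $B(\gamma_{j,\ell},c_3 2^{-j})$ and satisfying $|\Dd^\alpha\eta_{j,\ell}|\le C 2^{j|\alpha|}$. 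Each $\eta_{j,\ell}/(\sigma_j h(2^{-j})^{1/p})$ is (up to the moment considerations addressed below) a $(\bm{\sigma}\bm{h}^{1/p}\paren{n}^{1/p},p)_{K,L}$-atom in the sense of Definition~\ref{atoms-Bsigma}, so Proposition~\ref{atomicdecomposition} yields
\[
\|\varphi_j|\Bsih\|\ \le\ c\, K_j^{1/p}\sigma_j h(2^{-j})^{1/p}\ \sim\ \sigma_j.
\]
Given these $\varphi_j$, for any finitely supported scalar sequence $(\mu_j)_j$ the function $\Psi:=\sum_j\mu_j\varphi_j\in\mathcal{S}(\rn)$ satisfies
\[
\|\Psi|\Bsih\|\ \lesssim\ \|(\mu_j\sigma_j)_j|\ell_q\|,
\qquad \Psi\bigl|_\Gamma\equiv\sum_j\mu_j,
\]
the latter because every $\varphi_j$ equals $1$ on $\Gamma$. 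Plugging this into the assumed continuity of $\tr{\Gamma}\colon\Bsih\to L_r(\Gamma)$ produces
\[
\Bigl|\sum_j\mu_j\Bigr|\mu(\Gamma)^{1/r}\ =\ \|\Psi|_\Gamma|L_r(\Gamma)\|\ \le\ c'\|(\mu_j\sigma_j)_j|\ell_q\|
\]
for all finite $(\mu_j)_j$. Setting $\beta_j:=\mu_j\sigma_j$, this rewrites as $\|\bm{\sigma}^{-1}\bm{\beta}|\ell_1\|\lesssim\|\bm{\beta}|\ell_q\|$ for all nonnegative $\bm{\beta}$, and an application of Lemma~\ref{lemma-Landau} with $q_1=q$, $q_2=1$ (so $1/q^\ast=(1-1/q)_+=1/q'$) forces $\bm{\sigma}^{-1}\in\ell_{q'}$.

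\textbf{Main obstacle.} The hard step is the sharp atomic estimate $\|\varphi_j|\Bsih\|\lesssim\sigma_j$. The covering count $K_j\sim h(2^{-j})^{-1}$ follows from the $h$-set property, but the smooth bumps $\eta_{j,\ell}$ generally lack vanishing moments and so qualify as atoms only when $L=-1$ is admissible --- that is, when $\lowind{\bm{\sigma}\bm{h}^{1/p}\paren{n}^{1/p}}$ is large enough. The porosity hypothesis, via Proposition~\ref{crit-poros}, supplies the lower bound $\lowind{\bm{h}}>-n$, which together with the $n/p$ shift secures $\lowind{\bm{\sigma}\bm{h}^{1/p}\paren{n}^{1/p}}>\lowind{\bm{\sigma}}+\varepsilon/p$ for some $\varepsilon>0$; this renders the atomic argument automatic in a broad range of parameters. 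In the remaining low-regularity situations one either invokes the original Fourier-analytic definition of $\Bsih$ directly (the dyadic block at frequency $2^j$ already contributes the dominant term $\sim\tau_j\|\varphi_j\|_p\sim\sigma_j$), or substitutes wavelet building blocks from Theorem~\ref{wavesigma} complemented by the scaling function, which carry the required moment annihilations while preserving the sum-to-constant property on $\Gamma$.
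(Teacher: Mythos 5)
Your sufficiency argument is correct and is essentially the paper's: under $\bm{\sigma}^{-1}\in\ell_{q'}$ one embeds $\Bsih$ into $B^{\bm{h}^{1/p}\paren{n}^{1/p}}_{p,\min(p,1)}(\rn)$ (this is exactly the content of Definition~\ref{defi-Bsg}(i), whose case split matches (a) and (b)), then applies \eqref{trace-Lp} and $L_p(\Gamma)\hookrightarrow L_r(\Gamma)$.

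The necessity is where you genuinely diverge from the paper, and where there is a gap. Everything rests on the estimate $\|\varphi_j|\Bsih\|\lesssim\sigma_j$ for a bump which is $\equiv 1$ on $\Gamma$, and, as you concede, the atomic route delivers this only when $L=-1$ is an admissible moment order in Proposition~\ref{atomicdecomposition}, i.e.\ when $\lowind{\bm{\sigma}\bm{h}^{1/p}\paren{n}^{1/p}}>n\bigl(\tfrac{1}{\min(1,p)}-1\bigr)$. Porosity gives $\lowind{h}\geq -(n-\varepsilon)$ via Proposition~\ref{crit-poros}, but $\lowind{\sigma}$ is unconstrained by the hypotheses: an admissible sequence can have arbitrarily negative lower index while $\bm{\sigma}^{-1}\in\ell_{q'}$ still holds (alternate ratios $\sigma_{j+1}/\sigma_j\in\{2^{-M},2^{M+2}\}$ so that $\sigma_j\geq 2^{j}$ yet $\lowind{\sigma}=-M$), so the ``remaining low-regularity situations'' are not exceptional and the statement must cover them. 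Neither of your fallbacks does. The Fourier-analytic one is an upper-bound problem, not a single-block computation: since $\varphi_j$ has non-vanishing integral, its low-frequency dyadic blocks have $L_p$-norm comparable to $\|\varphi_j\|_p$, and summing them in $\ell_q$ against the weights $\tau_k=\sigma_k h_k^{1/p}2^{kn/p}$ forces the requirement $\bigl(\sum_{k\leq j}\tau_k^q\bigr)^{1/q}\lesssim\tau_j$, which is precisely the positivity of the lower index that the moment condition encodes --- nothing is gained. The wavelet fallback is not coherent as stated: compactly supported wavelets with vanishing moments cannot sum to the constant $1$ on a neighbourhood of $\Gamma$ at a single level, and expanding a fixed cutoff in the wavelet basis gives no reason for the level-$j$ contribution to have norm $\sim\sigma_j$.

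For comparison, the paper sidesteps the construction entirely: by the companion paper \cite{CaH-3}, $\bm{\sigma}^{-1}\notin\ell_{q'}$ already implies that $\mathcal{D}(\rn\setminus\Gamma)$ is dense in $\Bsih$, whence no bounded trace into any $L_r(\Gamma)$ can exist --- a cutoff $\equiv 1$ near $\Gamma$ is then a limit of functions vanishing near $\Gamma$, so its trace would have to equal both $1$ and $0$. Your constructive route, where it applies, is a legitimate and self-contained quantitative alternative (and your use of Lemma~\ref{lemma-Landau} at the end is correct), but to prove the proposition in full you must either establish that density statement or close the moment-condition gap; as written, your necessity argument only covers the range $\lowind{\bm{\sigma}\bm{h}^{1/p}\paren{n}^{1/p}}>n\bigl(\tfrac{1}{\min(1,p)}-1\bigr)$.
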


\begin{proof}
Note that the sufficiency is already covered by our considerations in Section~\ref{sect-2-1},
together with the embeddings $\ L_p(\Gamma) \hookrightarrow L_r(\Gamma)$ for
$0 < r\leq p$ and the compacity of $\Gamma$. It remains to show the necessity. But this comes as an easy consequence of the density results which we have obtained in \cite[Rem.~3.20,Thm.~3.18]{CaH-3}, namely that when $\bm{\sigma}^{-1} \notin \ell_{q'}$, then ${\mathcal D}(\rn \setminus \Gamma)$ is dense in $\Bsih$. For the convenience of the reader, we essentially repeat the argument presented in the beginning of Section 3.2 in \cite{CaH-3}, which shows that, in the presence of the just mentioned density, the trace (\ref{tracer<p}) cannot exist: 

Assume that  $\mathcal{D}(\rn\setminus\Gamma)$ is dense in $\Btau(\rn)$.
Let $\varphi\in C^\infty_0(\rn)$ with
$\varphi\equiv 1$ on a neighbourhood of $\Gamma$. Clearly, $\varphi \in \SRn \cap
\Btau(\rn)$ and therefore there exists a sequence $(\psi_k)_k \subset
\mathcal{D}(\rn\setminus\Gamma) \subset \SRn$ with
$$\left\| \varphi - \psi_k | \Btau(\rn) \right\|
\xrightarrow[k\to\infty]{} 0.$$
If the trace $\tr{\Gamma}: \Btau \ \longrightarrow \ L_r(\Gamma)
$ were to exist, this would imply
$$0 = \psi_k\raisebox{-0.4ex}[1ex][0.9ex]{$|_{\Gamma}$} = \tr{\Gamma}
\psi_k \xrightarrow[k\to\infty]{} \tr{\Gamma} \varphi =
\varphi\raisebox{-0.4ex}[1ex][0.9ex]{$|_{\Gamma}$} = 1 \quad
\text{in}\ L_r(\Gamma),$$
which is a contradiction.
\end{proof}

\begin{remark}\label{R-trace-ex}{
In view of Definition~\ref{defi-Bsg}(i) the above result states that,
under the given conditions, there is a trace in $L_p(\Gamma)$ if, and
only if, there is a trace in $L_r(\Gamma)$. 
Note that {Triebel} proved in \cite[Cor.~7.21]{T-F3} a related
result in case of $1<p<\infty$, $1\leq q<\infty$,
$1\leq r\leq p$, and $\bm{\sigma}\paren{n}^{\frac1p} \bm{h}^{\frac1p} =
\paren{s}$ with $ s>0$: also in that case the trace in $L_r(\Gamma)$
exists if, and only if, $\bm{\sigma}^{-1}\in\ell_{q'}$ (the further
restrictions on the parameters are partly caused by the used argument, in
particular, duality). Moreover, when $q>1$, then he can even show compactness of the trace operator in that setting.
}\end{remark}

We return to \eqref{emb-Bsg-Lr} now in case of $r\geq p$.

\begin{proposition}
\label{coro-1}
Let $\Gamma$ be an $h$-set such that $\upind{\bm{h}}<0$ and $\bm{\sigma}$ be an admissible sequence.\\[-3ex]
\bli
\item[{\upshape\bfseries (i)\hfill}]
Let $\ 0<p\leq r<\infty \,$ and $\, 0<q\leq\min (r,1)$. Then
$$\tr{\Gamma}: \Bsih \ \longrightarrow \ L_r(\Gamma)$$
exists and is bounded if, and only if,
$$\bm{\sigma}^{-1} \bm{h}^{\frac1r-\frac1p} \in \ell_{\infty}.$$ 
\item[{\upshape\bfseries (ii)\hfill}]
Let $\, 0<q<\infty\,$ and $\, 0<p\leq r \leq \min(q,1)$ 
and denote $\frac{1}{v_r}:=\frac1r-\frac1q$. Then
$$\tr{\Gamma}: \Bsih \ \longrightarrow \ L_r(\Gamma)$$
exists and is bounded if, and only if,
$$\bm{\sigma}^{-1} \bm{h}^{\frac1r-\frac1p} \in \ell_{v_r}.$$
\eli
\end{proposition}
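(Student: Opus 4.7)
The plan is to reduce both parts to the sharp Besov embedding result of Theorem~\ref{prop-Bsi-emb} on the sufficiency side, and to an atomic test-function argument combined with Lemma~\ref{lemma-Landau} on the necessity side. The key observation for sufficiency is that $L_r(\Gamma)$ is itself a trace space: by \eqref{trace-Lp},
\[
L_r(\Gamma) \;=\; \tr{\Gamma}\, B^{\bm{h}^{1/r}\paren{n}^{1/r}}_{r,\bar q}(\rn),
\]
with $\bar q = q$ in case (i) (since $q\leq\min(r,1)$) and $\bar q = r$ in case (ii) (since $r\leq 1$, hence $r\leq\min(r,1)$). Factorising the trace through this intermediate space reduces sufficiency to verifying the Besov embedding
$\Bsih\hookrightarrow B^{\bm{h}^{1/r}\paren{n}^{1/r}}_{r,\bar q}(\rn)$. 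Theorem~\ref{prop-Bsi-emb} characterises this through $p\leq r$ (given) together with $\bm{\sigma}^{-1}\bm{h}^{1/r-1/p}\in\ell_{q^\ast}$ where $1/q^\ast=(1/\bar q-1/q)_+$, which evaluates to $q^\ast=\infty$ in (i) and $q^\ast=v_r$ in (ii), exactly matching the stated conditions.

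For necessity I would employ smooth test atoms. For each $j$ pick $\gamma_j\in\Gamma$ and, following Definition~\ref{atoms-Bsigma} and Proposition~\ref{atomicdecomposition}, construct a smooth $(\bm{\sigma}\bm{h}^{1/p}\paren{n}^{1/p},p)_{K,L}$-atom $a_j$ supported in a cube of side $\sim 2^{-j}$ around $\gamma_j$ and arranged so that $|a_j|\gtrsim\sigma_j^{-1}h_j^{-1/p}$ on $\Gamma\cap B(\gamma_j,2^{-j-1})$. By \eqref{hset} together with admissibility of $\bm{h}$ (a consequence of \eqref{doubling}), this latter set has $\mu$-measure $\sim h_j$. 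The atomic decomposition yields $\|a_j\|_{\Bsih}\lesssim 1$ uniformly in $j$, while
\[
\|a_j\|_{L_r(\Gamma)}\;\gtrsim\;\sigma_j^{-1}h_j^{-1/p}\,h_j^{1/r}\;=\;\sigma_j^{-1}h_j^{1/r-1/p}\;=:\;A_j.
\]
In case (i) the assumed bound $\|a_j\|_{L_r(\Gamma)}\leq C\,\|a_j\|_{\Bsih}$ immediately delivers $A_j\lesssim 1$ for all $j$, i.e.\ $\bm{\sigma}^{-1}\bm{h}^{1/r-1/p}\in\ell_\infty$.

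For case (ii) I would further select an increasing subsequence $(j_k)_k$ and distinct points $\gamma_k\in\Gamma$ such that the supports of the atoms $a_{j_k}$ are pairwise disjoint on $\Gamma$; this is possible because $\Gamma$ is infinite and one can take $j_k$ growing as fast as needed at each step. For any $(\mu_k)\in\ell_q$, the superposition $f=\sum_k\mu_k\,a_{j_k}$ then satisfies $\|f\|_{\Bsih}\lesssim\|(\mu_k)\,|\,\ell_q\|$ (atomic decomposition) while, by the disjointness on $\Gamma$, $\|f\|_{L_r(\Gamma)}^r\gtrsim\sum_k|\mu_k|^r A_{j_k}^r$. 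Invoking the trace bound and applying Lemma~\ref{lemma-Landau} with $q_1=q$, $q_2=r$ (so $q^\ast=v_r$) forces $(A_{j_k})_k\in\ell_{v_r}$; Remark~\ref{subseq-adm} then lifts this subsequence condition to the full sequence via admissibility of $\bm{\sigma}$ and $\bm{h}$. The main obstacle I anticipate is the atom construction when the required vanishing-moment order $L$ is positive: one must cancel moments while preserving the pointwise lower bound on $\Gamma$, which can be arranged by a standard difference-of-bumps at well-separated sub-scales.
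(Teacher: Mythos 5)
Your sufficiency argument coincides with the paper's Step~1: factorise the trace through $B^{\bm{h}^{1/r}\paren{n}^{1/r}}_{r,\min(r,1)}(\rn)$ using Theorem~\ref{prop-Bsi-emb} and the boundedness half of \eqref{trace-Lp}, and the exponent bookkeeping you give is correct. Your necessity argument for (i) is a legitimate alternative: the paper instead invokes the growth envelope machinery (Remark~\ref{GEgeq} gives $\egv{\Bsg}(h_j)\gtrsim\sigma_j^{-1}h_j^{-1/p}$, which is compared with $\egv{L_r(\Gamma)}(t)\sim t^{-1/r}$ via \eqref{eg-XX}), whereas your single-atom test reproves the relevant envelope lower bound from scratch; both deliver $\bm{\sigma}^{-1}\bm{h}^{1/r-1/p}\in\ell_\infty$, and since your bound holds for \emph{every} $j$ there is no subsequence issue in this part.

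The necessity of (ii) contains a genuine gap. You place atoms at distinct points $\gamma_k$ at scales $j_k$ chosen ``growing as fast as needed at each step'' to force disjoint supports, obtain $(A_{j_k})_k\in\ell_{v_r}$ via Lemma~\ref{lemma-Landau}, and then invoke Remark~\ref{subseq-adm} to conclude $\bm{\sigma}^{-1}\bm{h}^{1/r-1/p}\in\ell_{v_r}$. But Remark~\ref{subseq-adm} only lifts membership in $\ell_u$ from subsequences along a \emph{fixed arithmetic progression} $(k\iota_0)_k$: its proof bounds the finitely many ($\iota_0-1$) skipped terms by the admissibility constants raised to a bounded power. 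For a subsequence with unbounded gaps the conclusion is false in general; e.g.\ $\gamma_j=j^{-1/u}$ is admissible, $(\gamma_{2^k})_k\in\ell_u$, yet $\bm{\gamma}\notin\ell_u$. So if your $j_k$ really grow superlinearly, the last step of your argument does not go through. To repair it you must either (a) carry out the disjoint-support construction along $j_k=k\iota_0$ for a fixed large $\iota_0$ — which requires an extra argument that at each scale $2^{-k\iota_0}$ one can pick $\gamma_k\in\Gamma$ outside the finitely many larger cubes already used (possible because $\sum_{l}h_{l\iota_0}<\mu(\Gamma)$ for $\iota_0$ large, using $\upind{\bm{h}}<0$, but this needs to be said) — or (b) do what the paper does: stack \emph{all} atoms at a single point $\gamma_0\in\Gamma$ at the lacunary scales $2^{-k\iota_0}$, as in \eqref{gbT}, and extract the $L_r(\Gamma)$ lower bound from the disjoint dyadic annuli $P_m$ around $\gamma_0$, whose measure satisfies $\mu(\Gamma\cap P_m)\gtrsim h_{m\iota_0}$ precisely because $\upind{\bm{h}}<0$ permits $\iota_0$ large enough that the inner hole is negligible. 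The paper's device sidesteps the disjointness problem entirely (the supports are nested, not disjoint) while keeping the subsequence arithmetic, which is exactly what Remark~\ref{subseq-adm} needs. Note also that this is where the hypothesis $\upind{\bm{h}}<0$ genuinely enters the necessity proof; in your sketch it is never visibly used in part (ii).
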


\begin{proof} 
{\em Step 1}.\quad We show the sufficiency of the conditions by applying Theorem~\ref{prop-Bsi-emb} to get
$$\Bsih \hookrightarrow B^{\bm{h}^{1/r}\paren{n}^{1/r}}_{r,\min( r,1)}\!(\rn),
$$
so that, with the help of \eqref{trace-Lp}, we see that the required traces exist.\\

{\em Step 2}.\quad We come to the necessity of the condition in (i). 
Assuming the existence of the trace in assertion (i), since $p\leq r$ and $\Gamma$ is compact we have that $L_r(\Gamma) \hookrightarrow L_p(\Gamma)$ and therefore $\Bsg$ exists as the trace of $\Bsih$ in $L_p(\Gamma)$ and, moreover, 
\begin{equation}
\Bsg \hookrightarrow L_r(\Gamma).
\label{eq:trace}
\end{equation}
Then Remark~\ref{GEgeq} can be applied to get that
\begin{equation}
\egv{\Bsg}(h_j) \gtrsim \sup_{k=0,\ldots,j} \sigma_k^{-1}h_k^{-\frac{1}{p}} \geq \sigma_j^{-1}h_j^{-\frac{1}{p}} \quad \mbox{for $\, j\,$ large enough}.
\label{eq:lower}
\end{equation}
On the other hand, from \eqref{eq:trace} and \eqref{eg-XX} we can write
$$\egv{\Bsg}(t) \lesssim \egv{L_r(\Gamma)}(t), \quad t>0.$$
From this, \eqref{eq:lower} and Example~\ref{env-Lpq} it follows that
$$\sigma_j^{-1}h_j^{-\frac{1}{p}} \lesssim h_j^{-\frac{1}{r}} \quad \mbox{for large enough } \, j \in \nat,$$
that is, $\bm{\sigma}^{-1}\bm{h}^{\frac{1}{r}-\frac{1}{p}} \in \ell_\infty$.\\

{\em Step 3}.\quad It remains to deal with the necessity of the
condition in (ii). 
Let $L \in \nat$  be such that $\, L > -1 + n(\frac{1}{p}-1)_+ - \sul(\bm{\sigma} \bm{h}^{1/p} \paren{n}^{1/p})$ and consider a $C^\infty$-function $\phi$ on $\rn$ such that 
\begin{itemize}
	\item there exist $C_1, C_2, C_3 > 0$ with $C_1<C_3<2C_1$, $\phi(x) \geq C_2$ for $| x |_\infty \leq C_1$ and $\phi(x) = 0$ for $| x |_\infty \geq C_3$,
	\item $\int_{\rn} { x^\beta \phi(x) \dint x } = 0$ whenever $\beta = (\beta_j)_{j=1}^n \in \non$ with $|\beta|_1 \leq L$.
\end{itemize}

Such a function exists (cf. \cite[Lemma 4.6]{C-Fa}). Moreover,
$$\sigma_j^{-1} h_j^{-\frac{1}{p}} \phi(2^j(\cdot-\gamma_0)), \quad j \in \nat, \quad \gamma_0 \, \mbox{ fixed in } \, \Gamma,$$
are (up to multiplicative constants) $(\bm{\sigma}\bm{h}^{\frac{1}{p}}\paren{n}^{\frac{1}{p}},p)_{K,L}$-atoms located at $Q_{j,m(j)}$, where $K$ is a fixed natural number satisfying $K > \lowind{\bm{\sigma}\bm{h}^{1/p} \paren{n}^{1/p}}$ and $m(j) \in \non$ is one of the possible $n$-tuples satisfying $|\gamma_0-2^{-j}m(j)|_\infty \leq 2^{-j-1}$. 

Let ${\bm b} := \gseq{{b}}{j}{\nat} \in \ell_q$ be any sequence of non-negative numbers. For any given $T \in \nat$ consider the sequence $\bm{b^T}$ formed by the terms of $\bm{b}$ until the order $T$ and by null terms afterwards and also consider
\begin{equation}
\label{gbT}
g^{\bm b^T}(x) := \sum_{j=1}^{T} { {b}_j
\sigma_{j \iota_0}^{-1} h_{j \iota_0}^{-\frac{1}{p}}
\phi(2^{j\iota_0}(x-\gamma_0)) } , \quad x \in \rn , 
\end{equation}
for the $\gamma_0$ fixed in $\Gamma$ considered above and with $\iota_0 \in \nat$ at our disposal.

From the atomic representation Proposition~\ref{atomicdecomposition} applied to $\Bsih$ and the above considerations, there exists $c'>0$ independent of $\bm{b}$ and $T$ such that
\begin{equation}
\| g^{\bm b^T} | \Bsih \| \leq c'\, \| {\bm b^T} | \ell_q \| \leq c' \, \| \bm{b} | \ell_q \| < \infty.
\label{gbT'}
\end{equation}
From our existence hypothesis and the fact that $g^{\bm b^T} \in C_0^\infty(\rn) \subset \SRn$, there also exists $c''>0$ independent of ${\bm b^T}$ and $T$ such that
$$\|g^{\bm b^T}|_\Gamma | L_r(\Gamma) \| \leq c''\, \| g^{\bm b^T} | \Bsih \|$$
and therefore 
\begin{equation}
\|g^{\bm b^T}|_\Gamma | L_r(\Gamma) \| \leq c'' c' \, \| \bm{b} | \ell_q \| < \infty.
\label{eq:ineqqr}
\end{equation}

For each $m \in \nat$ define
$$P_m := \{ x \in \rn : C_3 2^{-(m+1)\iota_0} < |x - \gamma_0|_\infty \leq C_1 2^{-m\iota_0} \}.$$
Since $C_1 < C_3 < 2C_1 \leq 2^{\iota_0}C_1$, then $C_1 2^{-(m+1)\iota_0} < C_3 2^{-(m+1)\iota_0} < C_1 2^{-m\iota_0}$ and therefore $P_m \not= \emptyset$ and $P_{m+1} \cap P_m = \emptyset$.
Observe that if $x \in P_m$, for some given $m\in\nat$, then
$$\phi(2^j(x-\gamma_0)) \geq C_2 \quad \mbox{if } \, 1 \leq j \leq m\iota_0 \quad \mbox{and}$$
$$\phi(2^j(x-\gamma_0)) = 0 \quad \mbox{if } \, j \geq (m+1)\iota_0.$$

We can then write that
\begin{eqnarray}
\label{eq:below}
\|g^{\bm b^T}|_\Gamma | L_r(\Gamma) \| & \geq & \left( \sum_{m=1}^T \int_{\Gamma \cap P_m} \Big( \sum_{j=1}^m b_j \sigma_{j \iota_0}^{-1} h_{j \iota_0}^{-\frac{1}{p}} C_2 \Big)^r \dint\mu(\gamma) \right)^\frac{1}{r} \nonumber \\
 & \geq & C_2 \left( \sum_{m=1}^T b_m^r \sigma_{m \iota_0}^{-r} h_{m \iota_0}^{-\frac{r}{p}} \mu(\Gamma \cap P_m) \right)^\frac{1}{r}
\end{eqnarray}
where, using the notation $Q_t(\gamma) := \{ x \in \rn : |x-\gamma|_\infty \leq t \}$,
$$\mu(\Gamma \cap P_m) =  \mu(Q_{C_1 2^{-m\iota_0}}(\gamma_0)) - \mu(Q_{C_3 2^{-(m+1)\iota_0}}(\gamma_0)).$$

Now we recall that $\Gamma$ is an $h$-set and therefore, besides the use of balls in Definition~\ref{defi-hset}(ii), we can also say that, given any $t_0>0$, there exist constants $a_0,a_1>0$ such that, for any $\gamma \in \Gamma$ and $t \in (0,t_0]$, $a_0 h(t) \leq \mu(Q_t(\gamma)) \leq a_1 h(t)$. By using Remark~\ref{rem-doubling}, the fact that $h$ is non-decreasing and the hypothesis $\upind{\bm{h}}<0$, it is then possible to choose $\iota_0 \in \nat$ large enough in order that
\begin{eqnarray*}
\mu(\Gamma \cap P_m) & \geq & a_0 h(C_1 2^{-m\iota_0}) - a_1 h(C_3 2^{-(m+1)\iota_0}) \\
 & \geq & a_2(\iota_0)h_{m\iota_0},
\end{eqnarray*}
where $a_2(\iota_0)$ can be chosen positive and independent of $m$. Using this in \eqref{eq:below} and conjugating with \eqref{eq:ineqqr} we get that
$$ (b_m \sigma_{m \iota_0}^{-1} h_{m \iota_0}^{\frac{1}{r}-\frac{1}{p}})_{m\in \nat} \in \ell_r.$$

Let $\bm{\beta} = \bm{b}$, $\bm{\alpha}= (\sigma_{m\iota_0}^{-1}
h_{m\iota_0}^{\frac1r-\frac{1}{p}})_{m\in \nat}$. We apply
Lemma~\ref{lemma-Landau} with $q_1=q$ and $q_2=r$ and obtain that the
subsequence $(\sigma^{-1}_{m\iota_0}
h_{m\iota_0}^{\frac1r-\frac1p})_m\in\ell_{v_r}$. The extension to $
\bm{\sigma}^{-1} \bm{h}^{\frac1r-\frac{1}{p}}\in\ell_{v_r}$ is a
consequence of Remark~\ref{subseq-adm}.
\end{proof}

\begin{corollary}\label{C-Bgamma-Lloc}
Let $0<p,q<\infty$, $\bm{\sigma}$ be an admissible sequence and $\Gamma$ be an $h$-set satisfying the porosity condition and such that $\upind{\bm{h}}<0$. The following two assertions are equivalent:
\begin{description}
	\item[(i)] $\Bsg$ exists as the trace space of $\Bsih$ in $L_p(\Gamma)$ and 
	$$\Bsg \hookrightarrow L_{\max(p,1)}(\Gamma);$$
	\item[(ii)] $\bm{\sigma}^{-1} \bm{h}^{-\left( \frac{1}{p}-1 \right)_+} \in \ell_{q'}$.
\end{description}
\end{corollary}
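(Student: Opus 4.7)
The plan is to reduce the corollary to the trace characterisations already established, by splitting according to whether $p\geq 1$ or $0<p<1$.

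When $p\geq 1$, we have $\max(p,1)=p$ and $\left(\frac{1}{p}-1\right)_+=0$, so condition (ii) reduces to $\bm{\sigma}^{-1}\in\ell_{q'}$. By Definition~\ref{defi-Bsg}(i) and the very construction of $\Bsg$ as a trace in $L_p(\Gamma)$, assertion (i) amounts exactly to the existence and boundedness of the trace operator $\tr{\Gamma}:\Bsih\to L_p(\Gamma)$. Proposition~\ref{ex-trace}(a), applied with $r=p$ (its hypotheses being in force since $\Gamma$ satisfies the porosity condition), gives precisely this equivalence with $\bm{\sigma}^{-1}\in\ell_{q'}$.

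When $0<p<1$, one has $\max(p,1)=1$ and $\left(\frac{1}{p}-1\right)_+=\frac{1}{p}-1$, so (ii) reads $\bm{\sigma}^{-1}\bm{h}^{1-1/p}\in\ell_{q'}$. The pivotal step is to reinterpret (i) as the existence and boundedness of $\tr{\Gamma}:\Bsih\to L_1(\Gamma)$. Indeed, compactness of $\Gamma$ together with $p\leq 1$ yields $L_1(\Gamma)\hookrightarrow L_p(\Gamma)$, so existence of the $L_1$-trace forces existence of the $L_p$-trace (hence $\Bsg$ is well defined), and the embedding $\Bsg\hookrightarrow L_1(\Gamma)$ then follows by taking the infimum over all representatives $g\in\Bsih$ with $\tr{\Gamma}g=f$ in the defining quasi-norm \eqref{qnormBG}. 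Conversely, $\Bsg\hookrightarrow L_1(\Gamma)$ combined with $\|\tr{\Gamma}g\,|\,\Bsg\|\leq\|g\,|\,\Bsih\|$ produces the required trace inequality for every $g\in\Bsih$.

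It then remains to invoke Proposition~\ref{coro-1} with $r=1$. If $0<q\leq 1$, part~(i) applies and delivers the condition $\bm{\sigma}^{-1}\bm{h}^{1-1/p}\in\ell_\infty$, which coincides with $\ell_{q'}$ since $q'=\infty$ in this range. If $1\leq q<\infty$, part~(ii) applies with $\frac{1}{v_1}=1-\frac{1}{q}=\frac{1}{q'}$, yielding the same condition $\bm{\sigma}^{-1}\bm{h}^{1-1/p}\in\ell_{q'}$. The two sub-cases agree at $q=1$ and jointly cover the full range $q\in(0,\infty)$. The only genuinely non-routine point is the equivalence between (i) and the existence of the $L_1$-trace in the sub-case $p<1$; once that is in hand the rest is parameter bookkeeping.
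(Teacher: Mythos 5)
Your argument is correct and follows essentially the same route as the paper's own proof: both reduce the statement to Proposition~\ref{ex-trace} when $p\geq 1$ and to Proposition~\ref{coro-1} with $r=1$ (part (i) for $q\leq 1$, part (ii) for $q>1$) when $0<p<1$, after identifying assertion (i) with the existence and boundedness of the relevant trace operator. The only cosmetic differences are that the paper takes $r=1$ instead of $r=p$ in the case $p\geq 1$ and imports the implication (ii)$\Rightarrow$(i) from an external reference (where it holds even without the porosity and $\upind{\bm{h}}<0$ assumptions), whereas you obtain both directions from the stated equivalences and make explicit the passage between $\Bsg\hookrightarrow L_1(\Gamma)$ and the boundedness of $\tr{\Gamma}:\Bsih\to L_1(\Gamma)$, which the paper leaves implicit.
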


\begin{proof}
(ii) $\Rightarrow$ (i): This was already proved in \cite[Prop.~3.9]{Cae-env-h}, even without special restrictions on the $h$-set.

(i) $\Rightarrow$ (ii): Consider first the case $1\leq p <
  \infty$. Since $\Gamma$ is compact and $p\geq 1$, the hypothesis implies that $\Bsg \hookrightarrow L_1(\Gamma)$ and the result follows by applying Proposition~\ref{ex-trace} with $r=1$.

Consider now the case $0<p<1$. Then the result follows from Proposition~\ref{coro-1} with $r=1$, using its part (i) in the subcase $0<q\leq 1$ and its part (ii) in the subcase $1<q<\infty$.
\end{proof}

\begin{remark}\label{BGamma-Lloc}{
If we compare the above corollary with its $\rn$-counterpart
Proposition~\ref{sharp-B-Lloc}, the condition in case of $0<p\leq 1$ is quite
similar, unlike in the case $1<p<\infty$; this might be surprising at
first glance, but the reason is presumably hidden in our assumption or
conclusion, respectively, that the
trace spaces should exist.}\end{remark}

\begin{remark}\label{R-B-C}{
Note that in the criterion \eqref{B-C-1} for $\Bsg \hookrightarrow C(\Gamma)$ only the
smoothness parameter $\bm{\sigma}$ and the underlying fractal geometry
$\bm{h}$ are involved (and not the underlying $\rn$-space) -- as it should be;
the same applies to Corollary~\ref{C-Bgamma-Lloc}. For similar discussions
concerning the independence of the dimension of the underlying space we refer to
\cite[Sects.~18.9, 20.3]{T-Frac}.
}\end{remark}

\begin{example}\label{R-B-Lloc}
If $\Gamma\ $ is a $d$-set, $h(r)=r^d$, $0<d< n$, and
$\bm{\sigma}= \paren{s}$ with $s>0$, then
Corollary~\ref{C-Bgamma-Lloc} 
reads as $ \mathbb{B}^s_{p,q}(\Gamma) \hookrightarrow
L_{\max(p,1)}(\Gamma)$ if, and only if, $ s>d (\frac1p-1)_+ $,
$0<q<\infty$, or $\ s = d (\frac1p-1)_+$ when $\ 0<q\leq 1$. This obviously
goes well with the classical situation in $\rn$,  cf. \cite[Thm.~3.3.2]{SiT} for a
complete characterisation, and \cite[Sect.~20.3]{T-Frac} for the case of $d$-sets.
\end{example}

\begin{corollary}
\label{coro-1a}
Let $0<p,q<\infty$, $\bm{\sigma}$ be an admissible sequence and $\Gamma$ be an $h$-set satisfying the porosity condition and such that $\upind{\bm{h}}<0$.  Then $\Bsg$ exists as the trace space of $\Bsih$ in $L_p(\Gamma)$ if, and only if,
\[
\bm{\sigma}^{-1} \in \begin{cases}
\ell_{q'}, &\text{if}\quad 1\leq p<\infty \quad\text{or}\quad 0<q\leq p<1,\\
\ell_{v_p}, &\text{if}\quad  0<p< 1 \quad\text{and}\quad  0< p < q<\infty,
\end{cases} 
\]
where $\frac{1}{v_p}:=\frac{1}{p}-\frac{1}{q}$.
\end{corollary}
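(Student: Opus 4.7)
The plan is to notice that the phrase \emph{`$\Bsg$ exists as the trace space of $\Bsih$ in $L_p(\Gamma)$'} means, by Remark~\ref{R-unique} together with \eqref{defBesovGamma}--\eqref{qnormBG}, nothing more than the boundedness of the trace operator $\tr{\Gamma}:\Bsih\to L_p(\Gamma)$. The corollary is then a case-by-case transcription of the already proved criteria for this boundedness with target $L_r(\Gamma)$, specialised to $r=p$.

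I would split the parameter domain $\{0<p,q<\infty\}$ into the three listed regimes, which are obviously disjoint and exhaustive. When $1\leq p<\infty$ and $0<q<\infty$, Proposition~\ref{ex-trace}(a) applied with $r=p$ delivers directly the equivalence with $\bm{\sigma}^{-1}\in\ell_{q'}$. When $0<q\leq p<1$, the same conclusion follows from Proposition~\ref{ex-trace}(b) with $r=p$; note that $q\leq 1$ here, so $q'=\infty$ and the condition simply reads $\bm{\sigma}^{-1}\in\ell_\infty$.

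For the remaining regime $0<p<1$ and $p<q<\infty$, I would invoke Proposition~\ref{coro-1}(ii) with $r=p$. The hypothesis $p\leq\min(q,1)$ is automatic from $p<1$ and $p<q$, the exponent $\tfrac{1}{r}-\tfrac{1}{p}$ vanishes, and so the criterion collapses precisely to $\bm{\sigma}^{-1}\in\ell_{v_p}$ with $\tfrac{1}{v_p}=\tfrac{1}{p}-\tfrac{1}{q}$, exactly as claimed. The porosity of $\Gamma$ (required by both propositions) and the bound $\upind{\bm{h}}<0$ (required by Proposition~\ref{coro-1}) are among the standing hypotheses of the corollary, so no further verification is needed.

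There is no substantial obstacle to expect: the corollary is essentially bookkeeping, combining the earlier trace-existence criteria specialised to $r=p$ and verifying that the three parameter regimes fit together consistently across their common boundaries.
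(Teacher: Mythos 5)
Your proposal is correct and follows essentially the same route as the paper: the paper likewise obtains the necessity from Proposition~\ref{ex-trace} with $r=p$ (for the first line of the brace) and from Proposition~\ref{coro-1}(ii) with $r=p$ (for the second line), and the sufficiency from Definition~\ref{defi-Bsg} -- which is exactly what the ``if'' directions of those propositions reduce to at $r=p$. Your bookkeeping of the three parameter regimes and of the hypotheses (porosity for Proposition~\ref{ex-trace}, $\upind{\bm{h}}<0$ for Proposition~\ref{coro-1}) is accurate.
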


\begin{proof}
The sufficiency comes from Definition~\ref{defi-Bsg} (which is backed up by the discussion in \cite[Sect.~3.1]{Cae-env-h}), even without special restrictions on the $h$-set.

As to the necessity, for the first line of the brace it comes from Proposition~\ref{ex-trace} with $r=p$ (and the assumption $\upind{\bm{h}}<0$ is not needed here), while for the second line it comes from part (ii) of Proposition~\ref{coro-1} with $r=p$ (and the porosity condition is not needed here).
\end{proof}

\begin{remark}\label{striso}{
Recall that $\upind{\bm{h}}<0$ implies that $\mu$ is strongly isotropic in the sense of Remark~\ref{strong-iso-index} and that
$\bm{h} \in \ell_1$. In particular, this implies
(cf. \cite[Sect.~2.1]{Cae-env-h}) that if $\bm{\sigma}^{-1} \notin
\ell_{v_p}$ (with $0<p< 1$ and $0< p < q<\infty$), then
$\bm{\sigma}^{-1} \bm{h}^{\frac{1}{r}- \frac{1}{p}} \notin \ell_{v_r}$
for all $r \in [p,\min(q,1)]$ --- which had to be, otherwise the above result would contradict Definition~\ref{defi-Bsg}(ii).
}\end{remark}

\begin{remark}\label{dichotomy}{
In \cite[Conj.~3.21]{CaH-3} we conjectured that, instead of
$\upind{\bm{h}}<0$, an extra assumption like $\lim_{j \to \infty} h_j
\sigma_j^{v_p}=0$ would give the equivalence above in the case $0<p<
1$ and $0< p < q<\infty$ together with a so-called dichotomy
result. We have nothing new to add here as far as the dichotomy is
concerned, but we would like to draw the attention of the reader to
\cite[Rem.~3.22]{CaH-3}, which shows that a result like the one above
for $0<p< 1$ and $0< p < q<\infty$ cannot hold without extra assumptions. In particular, even if not knowing whether $\upind{\bm{h}}<0$ can be improved or not as an extra requirement, at least we know that something extra must be required in the mentioned case of the result above.
}\end{remark}

\begin{remark}\label{emb-Bsg-Lr-comp}{
{Bricchi} obtained in  
\cite[Thm.~4.3.2]{bricchi-diss} that for $\lowind{\sigma}>0$, $1\leq r\leq 
p<\infty$, $0<q<\infty$, and with some additional assumptions on $\Gamma$
and $h$, the embedding $\ \Bsg \hookrightarrow L_r(\Gamma)$ is compact and he estimated the corresponding entropy numbers by
\begin{equation}
e_{\whole{h_j^{-1}}}\left(\Bsg \hookrightarrow L_r(\Gamma)\right) \sim \
\sigma_j^{-1}, \quad j\in\nat. 
\label{ek-Bgamma}
\end{equation}
As already pointed out in Remark~\ref{R-Boyd}, $\lowind{\sigma}>0$
implies $\bm{\sigma}^{-1} \in \ell_v$ for arbitrary $v$, but we
conjecture  
that the embedding $\ \Bsg \hookrightarrow L_r(\Gamma)$ remains compact for
$\lowind{\sigma}=0$ as long as $\bm{\sigma}^{-1} \in \ell_{q'}$ and $q>1$,
that is, in particular, with $\sigma_j^{-1} \to 0$ for $j\to\infty$, in good agreement
with \eqref{ek-Bgamma}. 
}\end{remark}

\begin{remark}\label{R-trace-ak}{
We return to Remark~\ref{R-trace-ex} and the special setting {Triebel} studied in \cite[Cor.~7.21]{T-F3}. For $1<p=q<\infty$, and $\bm{\sigma}\paren{n}^{\frac1p} \bm{h}^{\frac1p} =
\paren{s}$ with $ 0<s\leq\frac{n}{p}$, the trace operator 
\[
\tr{\Gamma}: B^s_{p,p}(\rn) \to L_p(\Gamma)
\]
is not only compact, but one can also determine the asymptotic behaviour of its approximation
numbers: Under the additional assumption that $ \mu$ is strongly isotropic in the sense of
Remark~\ref{strongly-isotropic} and satisfies
\begin{equation*}
\sum_{j\geq J} 2^{-j p'(s-\frac{n}{p})} h_j^{\frac{p'}{p}} \sim 2^{-J
  p'(s-\frac{n}{p})} h_J^{\frac{p'}{p}}, \quad J\in\nat,
\label{T-ak}
\end{equation*}
then, denoting the inverse function of $\bm{h}$ by $H$, {Triebel} proved in \cite[Thm.~7.22]{T-F3} that 
\[
a_k\left(\tr{\Gamma}: B^s_{p,p}(\rn) \to L_p(\Gamma)\right) \sim \
k^{-\frac1p} H(k^{-1})^{s-\frac{n}{p}}, \quad k\in\nat. 
\]
Following the proof in \cite[Thm.~7.22]{T-F3} and our different
notation this can be rewritten as 
\[
a_{\whole{h_j^{-1}}}\left(\tr{\Gamma}: B^{\bm{\sigma}
  \bm{h}^{1/p}\paren{n}^{1/p}}_{p,p}\!(\rn) \to L_p(\Gamma)\right)
\sim \ \sigma_j^{-1}, \quad j\in\nat,
\]
assuming that $1<p<\infty$, $\bm{\sigma}\paren{n}^{\frac1p} \bm{h}^{\frac1p} =
\paren{s}$ with $ 0<s\leq\frac{n}{p}$, and $\sum_{j\geq J} \sigma_j^{-p'} \sim \sigma_J^{-p'}$,
$J\in\nat$. Though in a slightly different setting, the similarity to
\eqref{ek-Bgamma} is obvious.
}\end{remark}

\begin{example}\label{emb-in-Lp}
In case of $0<p\leq r\leq 1$, $0< q< \infty$, and $\upind{\bm{h}}<0$, 
we obtain from Proposition~\ref{coro-1} that
\begin{equation*}
\Bsg \hookrightarrow L_r(\Gamma)
\label{s-15}
\end{equation*}
if, and only if,
\begin{equation*}
\bm{\sigma}^{-1} \bm{h}^{\frac1r-\frac1p}\in \ell_{v_r^+}\quad \mbox{with}\quad
\frac{1}{v_r^+} = \left(\frac1r-\frac1q\right)_+\ . 
\label{s-16}
\end{equation*}
In particular, when $\Gamma$ is a $d$-set with $0<d<n$, and
$\bm{\sigma}=\paren{s}$, then this result reads as 
$$ \mathbb{B}^s_{p,q}(\Gamma) \hookrightarrow L_r(\Gamma) $$
if, and only if, $s-\frac{d}{p}\geq -\frac{d}{r}$ if $q\leq r$,
or $s-\frac{d}{p}> -\frac{d}{r}$ if $q> r$. In other words, we
have for the limiting case $s-\frac{d}{p}= -\frac{d}{r}$ that 
$$ \mathbb{B}^{d(\frac1p-\frac1r)}_{p,q}(\Gamma) \hookrightarrow L_r(\Gamma)
\quad \mbox{if, and only if,}\quad q\leq r.$$
This behaviour is well-known from the classical $\rn$-situation, see
\cite[Rem.~3.3.5]{SiT} or \cite[Thm.~11.4]{T-func} for the case $r\geq
1$, and \cite[Thm.~1.15]{HaSch} otherwise. There it is proved by some
interpolation argument involving Lorentz spaces. Note that one has to be quite careful with the
definition of Besov spaces when $0<p<1$ and $0<s\leq n(\frac1p-1)$, that
is, whether the corresponding spaces are defined as spaces of tempered
distributions or as subspaces of $L_p$. 
\end{example}

We finally come to embeddings between two such Besov spaces on some
$h$-set $\Gamma$. We begin with sufficient conditions for such an
embedding. 

\begin{theorem}
Let $\Gamma$ be an $h$-set, 
$0<p_1, p_2< \infty$, $0<q_1, q_2< \infty$, $\bm{\sigma}$ and
$\bm{\tau}$ be admissible sequences. Let $q^\ast$ be given by
\eqref{q-ast}. 
If $\lowind{\bm{\tau}}>0$ and
\begin{equation}
\bm{\sigma}^{-1} \bm{\tau} \bm{h}^{-\left(\frac{1}{p_1}-\frac{1}{p_2}\right)_+} \in
\ell_{q^\ast}\ ,
\label{lim_emb-2a}
\end{equation}
then there exist the trace spaces $\Bsge$ and $\Btgz$ and
\begin{equation}
\Bsge \ \hookrightarrow\ \Btgz\ .
\label{lim_emb-1a}
\end{equation}
\label{emb-BGamma-suff}
\end{theorem}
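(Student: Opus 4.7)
The plan is to lift the embedding from $\Gamma$ back to $\rn$ via atomic decomposition: given $f\in\Bsge$, produce a lift $g$ on $\rn$, truncate its atomic expansion to atoms whose supports meet $\Gamma$, and re-interpret the surviving atoms as atoms of the target Besov space. The crucial geometric input is that the $h$-set condition fixes the cardinality of near-$\Gamma$ atoms at scale $j$ as $\#\{m\in\zn:cQ_{j,m}\cap\Gamma\neq\emptyset\}\sim h_j^{-1}$. First I verify existence of both trace spaces: from $\lowind{\bm{\tau}}>0$ and \eqref{estimate}, $\bm{\tau}^{-1}\in\ell_u$ for every $u>0$, so Definition~\ref{defi-Bsg}(i) produces $\Btgz$. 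Since $h_j\le 1$ and $-(1/p_1-1/p_2)_+\le 0$, assumption \eqref{lim_emb-2a} implies in particular $\bm{\sigma}^{-1}\bm{\tau}\in\ell_{q^\ast}$; combining with $\bm{\tau}^{-1}\in\ell_u$ for all $u$ via H\"older and monotonicity of the $\ell_u$-scale yields $\bm{\sigma}^{-1}\in\ell_w$ for every $w>0$, which covers both alternatives of Definition~\ref{defi-Bsg} (taking $r=p_1$ in (ii)), so $\Bsge$ exists as well.

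For $f\in\Bsge$ pick a lift $g\in B^{\bm{\sigma}\bm{h}^{1/p_1}\paren{n}^{1/p_1}}_{p_1,q_1}(\rn)$ with $\tr{\Gamma}g=f$ and norm within a factor $2$ of $\|f\,|\,\Bsge\|$, and apply Proposition~\ref{atomicdecomposition} with $K$ and $L$ large enough to serve both source and target simultaneously, obtaining $g=\sum_{j,m}\lambda_{j,m}a_{j,m}$, $\lambda\in b_{p_1,q_1}$. Let $M_j:=\{m\in\zn:cQ_{j,m}\cap\Gamma\neq\emptyset\}$ and set $\tilde g:=\sum_{j}\sum_{m\in M_j}\lambda_{j,m}a_{j,m}$; dropping atoms only decreases the $b_{p_1,q_1}$-norm, and each discarded atom vanishes pointwise on $\Gamma$, so by continuity of the trace (from the existence step) $\tr{\Gamma}\tilde g=f$. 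Define
\[
E_j:=\frac{\tau_j}{\sigma_j}\,h_j^{1/p_2-1/p_1},\qquad b_{j,m}:=E_j^{-1}a_{j,m}\quad (j\ge 1),
\]
and $b_{0,m}:=a_{0,m}$. A direct check of Definition~\ref{atoms-Bsigma} shows that, up to a universal constant, each $b_{j,m}$ (for $j\ge 1$) is a $(\bm{\tau}\bm{h}^{1/p_2}\paren{n}^{1/p_2},p_2)_{K,L}$-atom: support and moment conditions pass through unchanged, and the derivative bound is matched precisely by the choice of $E_j$. The finite-dimensional $j=0$ contribution is handled separately because $M_0$ is finite. Hence $\tilde g=\sum_{j,m\in M_j}\tilde\lambda_{j,m}b_{j,m}$ with $\tilde\lambda_{j,m}:=\lambda_{j,m}E_j$ for $j\ge 1$.

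To bound $\|\tilde\lambda\,|\,b_{p_2,q_2}\|$ by $\|\lambda\,|\,b_{p_1,q_1}\|$, I compare the inner $\ell_{p_2}(M_j)$- and $\ell_{p_1}(M_j)$-sums on the finite set $M_j$: when $p_1\le p_2$, use $\ell_{p_1}(M_j)\hookrightarrow\ell_{p_2}(M_j)$; when $p_1>p_2$, H\"older together with $\#M_j\sim h_j^{-1}$ picks up an extra factor $h_j^{1/p_1-1/p_2}$ that exactly cancels the $h_j^{1/p_2-1/p_1}$ in $E_j$. Pulling out the $j$-dependent scalar and invoking Lemma~\ref{lemma-Landau}, both cases yield
\[
\|\tilde\lambda\,|\,b_{p_2,q_2}\|\;\lesssim\;\bigl\|\bigl(\sigma_j^{-1}\tau_j\,h_j^{-(1/p_1-1/p_2)_+}\bigr)_j\bigr\|_{\ell_{q^\ast}}\;\|\lambda\,|\,b_{p_1,q_1}\|,
\]
which is finite by \eqref{lim_emb-2a}. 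The sufficiency direction of Proposition~\ref{atomicdecomposition} applied to the target space then gives $\tilde g\in B^{\bm{\tau}\bm{h}^{1/p_2}\paren{n}^{1/p_2}}_{p_2,q_2}(\rn)$ with norm $\lesssim\|f\,|\,\Bsge\|$; combined with $\tr{\Gamma}\tilde g=f$ this yields $f\in\Btgz$ and the embedding \eqref{lim_emb-1a}.

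The main obstacle is the regime $p_1>p_2$, where Theorem~\ref{prop-Bsi-emb} rules out any direct $\rn$-embedding between the two lift spaces. The argument survives only because the fractal cardinality $\#M_j\sim h_j^{-1}$ injects, via H\"older on the finite index set $M_j$, exactly the factor $h_j^{1/p_1-1/p_2}$ needed to cancel the derivative rescaling by $E_j$; this is what allows a single hypothesis with the truncated exponent $(1/p_1-1/p_2)_+$ to cover both parameter regimes uniformly.
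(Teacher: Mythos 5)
Your route is genuinely different from the paper's. The paper never touches atomic decompositions in the main case: for $p_1\le p_2$ it composes the $\rn$-embedding $\id_1:\Bsihe\to\Btauhz$ from Theorem~\ref{prop-Bsi-emb} with the trace operator of the \emph{target} space and with $L_{p_2}(\Gamma)\hookrightarrow L_{p_1}(\Gamma)$, and then identifies this composition with the trace operator of the \emph{source} space via the uniqueness statement of Remark~\ref{R-unique}; for $p_1>p_2$ it reduces to the equal-integrability case and invokes a H\"older-type embedding $\mathbb{B}^{\bm{\tau}}_{p_1,q_2}(\Gamma)\hookrightarrow\Btgz$, which is proved by exactly the atomic/cardinality mechanism you use (the paper credits Bricchi for it). Your argument treats both regimes uniformly, and the bookkeeping is correct: the rescaling factor $E_j$ together with, for $p_1>p_2$, the H\"older gain $(\#M_j)^{1/p_2-1/p_1}\sim h_j^{1/p_1-1/p_2}$ produces precisely the sequence $\bigl(\sigma_j^{-1}\tau_j h_j^{-(1/p_1-1/p_2)_+}\bigr)_j$ to which Lemma~\ref{lemma-Landau} applies. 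The existence discussion is also essentially right, except that for $\Btgz$ with $0<p_2<1$ and $p_2<q_2$ you must use alternative (ii) of Definition~\ref{defi-Bsg} with $r=p_2$, not alternative (i) as stated.

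The genuine gap is the assertion $\tr{\Gamma}\tilde g=f$. You justify it by noting that each discarded atom vanishes pointwise on $\Gamma$ and then appeal to ``continuity of the trace,'' but the trace operator is defined by completion from $\SRn$ and is \emph{not} pointwise evaluation on general elements of $\Bsihe$; pointwise vanishing of the summands of $u=g-\tilde g$ does not by itself yield $\tr{\Gamma}u=0$. What is true is that each partial sum $u_N$ (over $j\le N$) vanishes on a full neighbourhood of $\Gamma$, since for each $j$ the union of the supports of the discarded atoms is a closed set disjoint from the compact $\Gamma$ and hence at positive distance from it; such functions can then be mollified within $\rn\setminus\Gamma$, approximated in the quasi-norm by elements of $\mathcal{D}(\rn\setminus\Gamma)\subset\SRn$, giving $\tr{\Gamma}u_N=0$, and finally $\tr{\Gamma}u=0$ by letting $N\to\infty$ using convergence of the atomic series in the quasi-norm ($q_1<\infty$). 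None of this is in your write-up. A second, related omission: $\tilde g$ lies in both $\Bsihe$ and $\Btauhz$, and you tacitly use that the two trace operators (into $L_{p_1}(\Gamma)$ and $L_{p_2}(\Gamma)$ respectively) assign it the same element; this again requires a Remark~\ref{R-unique}-type uniqueness argument. Both points are standard and fixable --- they are precisely the delicacies that the paper's operator-composition proof is engineered to bypass --- but as written your proof asserts rather than proves the key identification.
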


\begin{proof}
{\em Step 1}.\quad We consider the case $p_1\leq p_2$. Since in that case
\eqref{lim_emb-2a} ensures that
\[
\left[\bm{\sigma}\bm{h}^{\frac{1}{p_1}}\paren{n}^{\frac{1}{p_1}}\right]^{-1} \left[\bm{\tau}\bm{h}^{\frac{1}{p_2}}\paren{n}^{\frac{1}{p_2}}\right]\paren{n}^{\frac{1}{p_1}-\frac{1}{p_2}}=
\bm{\sigma}^{-1} \bm{\tau} \bm{h}^{-\left(\frac{1}{p_1}-\frac{1}{p_2}\right)_+} \in
\ell_{q^\ast}\ ,
\]
then, by Theorem~\ref{prop-Bsi-emb}, the embedding
\begin{equation}
\id_1: \Bsihe \to \Btauhz
\label{id_1-B-B}
\end{equation}
exists and is bounded. \\
On the other hand, following our discussion in Section~\ref{sect-2-1},
the hypothesis $\lowind{\bm{\tau}}>0$ guarantees the existence of the
trace space $\Btgz$, that is, the existence and boundedness of the
trace operator 
\[
\tr{\Gamma} : \Btauhz \to L_{p_2}(\Gamma)
\]
such that $\tr{\Gamma}\varphi=
\varphi\raisebox{-0.4ex}[1.1ex][0.9ex]{$|_{\Gamma}$}$ whenever
$\varphi\in\SRn$. Since $p_1\leq p_2$ and $\Gamma$ is compact, we
also have that the embedding
\[
\id_2: L_{p_2}(\Gamma)\to L_{p_1}(\Gamma)
\]
exists and is bounded. Therefore
\[
\id_2\circ \tr{\Gamma}\circ \id_1 : \Bsihe \to L_{p_1}(\Gamma)
\]
is a bounded linear operator such that, given any $\varphi\in\SRn$,
\[
\left(\id_2\circ \tr{\Gamma}\circ \id_1 \right)\varphi =
\varphi\raisebox{-0.4ex}[1.1ex][0.9ex]{$|_{\Gamma}$}\ .
\]
By Remark~\ref{R-unique} it must be the trace operator of $\Bsihe$ in
$L_{p_1}(\Gamma)$, hence there exists also 
\[\Bsge=\left(\id_2\circ
\tr{\Gamma}\circ \id_1 \right)\Bsihe.
\] 
Moreover, given $f\in\Bsge$, there is some $g\in\Bsihe $ such that $f=\left(\id_2\circ
\tr{\Gamma}\circ \id_1 \right)g$. But
\eqref{id_1-B-B} implies also $g\in\Btauhz$, thus also $f\in\Btgz$, and
\begin{align*}
\left\| f| \Btgz\right\| & = \inf_{\tr{\Gamma} g=f} \left\| g|
\Btauhz\right\|\\
&\leq \ c  \ \inf_{(\id_2\circ \tr{\Gamma}\circ \id_1)g=f} \  \left\| g|\Bsihe\right\|\\
& = \ c \left\| f  | \Bsge\right\|.
\end{align*}

{\em Step 2}.\quad We deal with the case $p_1>p_2$. Applying the
preceding step to $p_1=p_2$, we have 
\[\Bsge \hookrightarrow \mathbb{B}^{\bm{\tau}}_{p_1, q_2}(\Gamma)
\]
since \eqref{lim_emb-2a} reads then as $\bm{\sigma}^{-1} \bm{\tau} \in
\ell_{q^\ast}$. This also covers the existence of the trace
spaces. The hypothesis  $\lowind{\bm{\tau}}>0$ guarantees, as before,
the existence of the trace space $\Btgz$, and from the assumption
$p_1>p_2$ it follows that 
\[
\mathbb{B}^{\bm{\tau}}_{p_1, q_2}(\Gamma) \hookrightarrow \Btgz.
\]
This can be proved by using atomic representations for the
corresponding spaces on $\rn$, thus extending a related result in
\cite[Step~2 of the proof of Thm.~4.3.2]{bricchi-diss}. Putting the two previous
embeddings together we arrive at $\Bsge \hookrightarrow \Btgz$, as desired.
\end{proof}

\begin{remark}\label{R-assumpt-suff}{
From the above proof we see that the hypothesis $\lowind{\bm{\tau}}>0$
was only used to guarantee the existence of some trace spaces. We can
dispense with it if we, instead, assume that $\Btgz$ exists, and, in
case $p_1>p_2$, that also ${\mathbb{B}}^{\bm{\tau}}_{p_1,q_2}(\Gamma)$ exists.
}\end{remark}

\begin{remark}\label{suff-bricchi}{
Our result above extends a corresponding one by {Bricchi} in 
\cite[Thm.~4.3.1]{bricchi-diss} as far as the existence of trace
spaces is concerned, whereas {Bricchi}
obtained also compactness assertions there.
}\end{remark}

\begin{example}\label{emb-suff-d}
Let $0<d<n$, $\Gamma$ be a $d$-set, 
$0<p_1, p_2< \infty$, $0<q_1, q_2< \infty$, $s,t>0$. Then
\[
\mathbb{B}^s_{p_1,q_1}(\Gamma) \hookrightarrow
\mathbb{B}^t_{p_2,q_2}(\Gamma)\quad \text{if}\quad \begin{cases}
s-t\geq d\left(\frac{1}{p_1}-\frac{1}{p_2}\right)_+ &
\text{if}\ q_1\leq q_2, \\[1ex] s-t> d\left(\frac{1}{p_1}-\frac{1}{p_2}\right)_+ &
\text{if}\ q_1> q_2\ .
\end{cases}
\] 
This is an immediate consequence of
Theorem~\ref{emb-BGamma-suff}. We thus recover (part of) the
result of {Triebel} in \cite[Thm.~20.6]{T-Frac} where he also
dealt with the asymptotic behaviour of the entropy numbers of the
compact embedding $\mathbb{B}^s_{p_1,q_1}(\Gamma) \hookrightarrow
\mathbb{B}^t_{p_2,q_2}(\Gamma)$ when $s-t>
d\left(\frac{1}{p_1}-\frac{1}{p_2}\right)_+$ (including further
limiting cases of $t=0$, $p_i, q_i=\infty$). 
\end{example}

\begin{example}\label{emb-suff-d-Psi}
Similarly, if we consider $(d,\Psi)$-sets
  $\Gamma$, $0<d<n$, $\Psi$ an admissible function, and corresponding trace spaces
  $\mathbb{B}^{(s,\Psi)}_{p,q}(\Gamma)$, recall
  Remark~\ref{BDef-d-sets}, then we can compare our result with a
  corresponding one of {Moura} in \cite[Thm.~3.3.2]{moura-diss} where
  the focus was again on compactness assertions rather than on
  (limiting) continuous 
  embeddings. Explicating Theorem~\ref{emb-BGamma-suff} in that
  setting we obtain for $0<p_1, p_2< \infty$, $0<q_1, q_2< \infty$,
  $s,t>0$, that
\[
\mathbb{B}^{(s,\Psi)}_{p_1,q_1}(\Gamma) \hookrightarrow
\mathbb{B}^{(t,\Psi)}_{p_2,q_2}(\Gamma)\quad \text{if}\quad 
\left(2^{-j(s-t-d(\frac{1}{p_1}-\frac{1}{p_2})_+)}
\Psi(2^{-j})^{-(\frac{1}{p_1}-\frac{1}{p_2})_+} \right)_j \in \ell_{q^\ast}.
\] 
\end{example}

We study the necessity of the condition \eqref{lim_emb-2a}  for the embedding \eqref{lim_emb-1a} now.

\begin{theorem}
Let $\Gamma$ be an $h$-set satisfying the porosity condition with \eqref{T-1}, 
$0<p_1, p_2< \infty$, $0<q_1, q_2< \infty$, $\bm{\sigma}$ and
$\bm{\tau}$ be admissible sequences with 
\begin{equation}
\upind{\tau \bm{h}^{1/p_2}}<0
\label{E-3a}
\end{equation}
and
\begin{equation}
\lowind{\tau} > -\lowind{h}\left(\frac{1}{p_2}-1\right)_+ \ .
\label{E-3b}
\end{equation}
Let $q^\ast$ be given by \eqref{q-ast}. If the trace spaces $\Bsge$ and $\Btgz$ exist and 
\begin{equation}
\Bsge \ \hookrightarrow\ \Btgz,
\label{B3}
\end{equation}
then 
\begin{equation}
\bm{\sigma}^{-1} \bm{\tau} \bm{h}^{-\left(\frac{1}{p_1}-\frac{1}{p_2}\right)} \in
\ell_{q^\ast}\ .
\label{B4}
\end{equation}
\label{emb-BGamma-nec}
\end{theorem}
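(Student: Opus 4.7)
My plan is to adapt the necessity argument from the proof of Proposition~\ref{coro-1}(ii), now with target space $\Btgz$ in place of $L_r(\Gamma)$, the required lower bound on the $\Btgz$-norm coming from the envelope characterisation of $\Btgz$ supplied by Theorem~\ref{below 2}. Fix $\gamma_0\in\Gamma$ and let $\phi$ be the bump with sufficient vanishing moments used there. For a sequence $\bm{b}=(b_j)_j$ of non-negative numbers with $\bm{b}\in\ell_{q_1}$, a truncation $T\in\nat$, and a parameter $\iota_0\in\nat$ to be chosen large enough below, I consider
\[
g^{\bm{b}^T}(x) \;=\; \sum_{j=1}^{T} b_j\,\sigma_{j\iota_0}^{-1}\,h_{j\iota_0}^{-1/p_1}\,\phi\bigl(2^{j\iota_0}(x-\gamma_0)\bigr), \qquad x\in\rn,
\]
exactly as in \eqref{gbT} but with $p_1$ in place of $p$. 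Up to multiplicative constants its summands are $(\bm{\sigma}\bm{h}^{1/p_1}\paren{n}^{1/p_1},p_1)_{K,L}$-atoms, so Proposition~\ref{atomicdecomposition} yields $\|g^{\bm{b}^T}\,|\,\Bsihe\|\le c\,\|\bm{b}\,|\,\ell_{q_1}\|$. The trace $f^T:=g^{\bm{b}^T}|_\Gamma$ belongs to $\Bsge$ with the same control, and the hypothesis \eqref{B3} then delivers $\|f^T\,|\,\Btgz\|\le c'\,\|\bm{b}\,|\,\ell_{q_1}\|$.

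The hypotheses \eqref{T-1}, \eqref{E-3a} (which strengthens the non-membership \eqref{T-2} for the triple $\bm{\tau},p_2,q_2$ by Remark~\ref{strong-iso-index}) and \eqref{E-3b} are exactly what Theorem~\ref{below 2} requires of $\Btgz$, so $\envg\Btgz=(\Psi_{p_2,q_2'}^{\bm{\tau}},q_2)$, where $\Psi_{p_2,q_2'}^{\bm{\tau}}$ denotes the function \eqref{Psi-ru} associated with $\Sigma_{\bm{\tau},\bm{h}}$. The envelope inequality \eqref{eq-envg} with $v=q_2$ then provides the lower bound
\[
\left(\int_0^{\varepsilon}\left[\frac{(f^T)^{*,\mu}(t)}{\Psi_{p_2,q_2'}^{\bm{\tau}}(t)}\right]^{q_2}\mu_{\mathsf G}(\dint t)\right)^{1/q_2} \;\le\; c''\,\|f^T\,|\,\Btgz\|.
\]
To make the left-hand side explicit on the scale $\{h_{m\iota_0}\}_m$ I will use two facts. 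First, \eqref{E-3a} together with Remark~\ref{strong-iso-index} applied to $\bm{\tau}\bm{h}^{1/p_2}$ forces the partial sum $\sum_{k\le M}(\tau_k h_k^{1/p_2})^{-q_2'}$ to be comparable to its largest (last) term, so, after discretising the defining integral by means of \eqref{Lambda-sh},
\[
\Psi_{p_2,q_2'}^{\bm{\tau}}(h_{m\iota_0}) \;\sim\; \tau_{m\iota_0}^{-1}\,h_{m\iota_0}^{-1/p_2}, \qquad m\in\nat;
\]
admissibility of $\bm{\tau}$ and $\bm{h}$ then shows that the $\mu_{\mathsf G}$-mass of each interval $[h_{(m+1)\iota_0},h_{m\iota_0}]$ is bounded both above and below by positive constants depending only on $\iota_0$. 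Second, after choosing $\iota_0$ large enough so that $\mu(\Gamma\cap Q_{C_1 2^{-m\iota_0}}(\gamma_0))\gtrsim h_{m\iota_0}$ (which rests on $\upind{h}<0$), non-negativity of $\bm{b}$ combined with $\phi(2^{m\iota_0}(\cdot-\gamma_0))\ge C_2$ on $Q_{C_1 2^{-m\iota_0}}(\gamma_0)$ yields $(f^T)^{*,\mu}(t)\gtrsim b_m\,\sigma_{m\iota_0}^{-1}\,h_{m\iota_0}^{-1/p_1}$ for $0<t\le c_0 h_{m\iota_0}$ and $1\le m\le T$.

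Plugging these two estimates into the envelope inequality and letting $T\to\infty$ I obtain
\[
\left(\sum_{m\ge M_0}\bigl[b_m\,\sigma_{m\iota_0}^{-1}\tau_{m\iota_0}h_{m\iota_0}^{-(\frac{1}{p_1}-\frac{1}{p_2})}\bigr]^{q_2}\right)^{1/q_2} \;\le\; c\,\|\bm{b}\,|\,\ell_{q_1}\|
\]
for every non-negative $\bm{b}\in\ell_{q_1}$ (here $M_0$ is fixed by the $\varepsilon$ in the integral). Lemma~\ref{lemma-Landau}, applied with $\bm{\alpha}=(\sigma_{m\iota_0}^{-1}\tau_{m\iota_0}h_{m\iota_0}^{-(1/p_1-1/p_2)})_{m\ge M_0}$ and $\bm{\beta}=(b_m)_{m\ge M_0}$, gives $\bm{\alpha}\in\ell_{q^\ast}$; since $\bm{\sigma}^{-1}\bm{\tau}\bm{h}^{-(1/p_1-1/p_2)}$ is admissible (Remark~\ref{R-adm-1}) and $\bm{\alpha}$ is its subsequence along $m\mapsto m\iota_0$, Remark~\ref{subseq-adm} upgrades this to \eqref{B4}. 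I expect the main technical hurdle to be the discretisation step, i.e.\ the simultaneous verification that $\Psi_{p_2,q_2'}^{\bm{\tau}}(h_{m\iota_0})$ is captured by the first term of the relevant tail sum (this is precisely where \eqref{E-3a} is indispensable) and that the measure $\mu_{\mathsf G}$ puts uniformly bounded mass on each subinterval $[h_{(m+1)\iota_0},h_{m\iota_0}]$; once these two are in place, the remaining calculation parallels that in the third step of the proof of Proposition~\ref{coro-1}(ii).
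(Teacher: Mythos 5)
Your proposal follows essentially the same route as the paper's proof: the same test functions $g^{\bm{b}^T}$ from \eqref{gbT}, the same lower bound via the growth envelope of $\Btgz$ supplied by Theorem~\ref{below 2} (with \eqref{E-3a} yielding both the non-membership condition \eqref{T-2} for $\bm{\tau},p_2,q_2$ and the comparability of the partial sums defining $\Psi^{\bm{\tau}}_{p_2,q_2'}$ with their last term), and the same conclusion via Lemma~\ref{lemma-Landau} and Remark~\ref{subseq-adm} --- the paper merely splits the discretisation of \eqref{eq-envg} into the cases $q_2>1$ and $0<q_2\le 1$, which your uniform ``$\mu_{\mathsf G}$-mass of each interval $[h_{(m+1)\iota_0},h_{m\iota_0}]$ is bounded below'' formulation subsumes. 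One imprecision to repair: the pointwise bound $|g^{\bm{b}^T}|\gtrsim b_m\sigma_{m\iota_0}^{-1}h_{m\iota_0}^{-1/p_1}$ cannot be asserted on all of $\Gamma\cap Q_{C_1 2^{-m\iota_0}}(\gamma_0)$, because the bumps at scales $j>m$ are still supported there and $\phi$ necessarily takes negative values (it has vanishing moments), so the higher-frequency terms could cancel the $j\le m$ contributions; as in Step~3 of the proof of Proposition~\ref{coro-1}, which you cite, one must restrict to the annuli $P_m$ on which those finer bumps vanish identically, and it is the resulting estimate $\mu(\Gamma\cap P_m)\gtrsim h_{m\iota_0}$ --- a difference of two measures, not the plain $h$-set bound on a cube --- that requires $\upind{\bm{h}}<0$ and $\iota_0$ large.
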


\begin{proof}
{\em Step 1}.\quad We would like to start by remarking that the assumption that ${\mathbb{B}}^{\bm{\tau}}_{p_2,q_2}(\Gamma)$ exists is not really necessary, as it comes as a consequence of \eqref{T-1} and \eqref{E-3b}, due to Corollary~\ref{coro-1a} and (the end of) Remark~\ref{R-Boyd}. The latter also guarantees, together with \eqref{E-3a}, that $\bm{\tau}^{-1}\bm{h}^{-1/p_2}\not\in\ell_\infty$, therefore
\begin{equation}
\bm{\tau}^{-1}\bm{h}^{-1/p_2}\not\in\ell_{q_2'} .
\label{B5}
\end{equation}
Hence, by Theorem~\ref{below 2},  $\Btgz$ possesses a non-trivial growth envelope. We return to the example $g^{{\bm b^T}}$ given by   \eqref{gbT}, 
\begin{equation*}
g^{\bm b^T}(x) = \sum_{r=1}^{T} { {b}_r
\sigma_{r \iota_0}^{-1} h_{r \iota_0}^{-\frac{1}{p_1}}
\phi(2^{r\iota_0}(x-\gamma_0)) } , \quad x \in \rn , 
\label{B6}
\end{equation*}
where now the sequence $\bm{b}=(b_r)_{r=1}^\infty$ of non-negative numbers belongs to $\ell_{q_1}$. According to \eqref{qnormBG} and \eqref{gbT'} we have
\begin{equation*}
\Vert g^{\bm{b}^T}\mid \Bsge \Vert \leq c_1^{-1}\, \Vert
{\bm{b}}\mid \ell_{q_1}\Vert
\label{s-11}
\end{equation*}
(recall that we assume that $\Bsge$ exists, see also the discussion in Section~\ref{sect-2-1}). On the other hand, similarly as in Step~3 of the proof of Proposition~\ref{coro-1} (or directly from \cite[Lemma~3.3]{Cae-env-h}), if $\iota_0\in\nat$ is chosen large enough we have
\begin{equation}
(g^{\bm b^T})^{*,\mu}(c_2 h_{k \iota_0})
 \geq c' \sum_{r=1}^{k}
 { {b}_r \sigma_{r \iota_0}^{-1} h_{r \iota_0}^{-\frac{1}{p_1}} } , \quad k
 \in \nat,
\label{s-12}
\end{equation}
with $c_2\in (0,1]$. We apply Theorem~\ref{below 2} to $\Btgz$ and can
thus estimate, also due to our hypothesis \eqref{B3},
\begin{align}
\left\| \bm{b} | \ell_{q_1}\right\| &\geq c_1 \left\| g^{\bm{b}^T} |\Bsge \right\| \nonumber\\
&\geq c_3 \left\| g^{\bm{b}^T} | \Btgz \right\| \nonumber\\
&\geq c_4 \left(\int_0^\varepsilon
\left[\frac{(g^{\bm b^T})^{*,\mu}(t)}{\Psi^{\bm{\tau}}_{p_2, q_2'}(t)}\right]^{q_2} 
\mu_{\Psi^{\bm{\tau}}} (\dint t)
\right)^{1/q_2}    
\label{B9}
\end{align}
where $\Psi^{\bm{\tau}}_{p_2, q_2'}$ is given by \eqref{Psi-ru} or \eqref{Psi-ru'} with $r=p_2$, $u=q_2'$ and $\bm{\sigma}$ replaced by $\bm{\tau}$, and $\mu_{\Psi^{\bm{\tau}}}$ stands for the corresponding Borel measure $ \mu_{\mathsf G} $ as explained before Definition~\ref{defi-envg}.\\

{\em Step 2}.\quad We first deal with the case $q_2>1$. We proceed from \eqref{B9}, where we can write, equivalently, the last expression as
\begin{equation}
\label{B10}
\left(\int_0^\varepsilon
\left[\frac{(g^{\bm b^T})^{*,\mu}(t)}{\Psi^{\bm{\tau}}_{p_2, q_2'}(t)^{q_2'}}\right]^{q_2} 
t^{-\frac{q_2'}{p_2}-1} \Lambda_{\bm{\tau}}(t^{-\frac1n})^{-q_2'} \dint t
\right)^{1/q_2},    
\end{equation}
where $\Lambda_{\bm{\tau}}$ is the same as in \eqref{Lambda} with $\bm{N}={\bm h}^{-1}$ (see also \cite[Rem.~4.15]{C-Fa}). Using \eqref{s-12}, the admissibility of the sequences $\bm{\tau}$ and ${\bm h}$, and discretising \eqref{Psi-ru} (see also \cite[Lemma~2.6, Prop.~2.7]{C-Fa}), 
\eqref{B10} is bounded from below by
\begin{equation}
\Big(\sum_{k=k_0}^\infty \Big(\sum_{r=1}^k b_r \sigma_{r \iota_0}^{-1} h_{r\iota_0}^{-\frac{1}{p_1}}\Big)^{q_2} \Big(\sum_{r=1}^{k\iota_0} h_r^{-\frac{q_2'}{p_2}} \tau_r^{-q_2'}\Big)^{-q_2} h_{k\iota_0}^{-\frac{q_2'}{p_2}} \tau_{k\iota_0 }^{-q_2'} \Big)^{\frac{1}{q_2}}
\label{B11}
\end{equation}
up to a constant factor, where $k_0\in\nat$ is chosen sufficiently large (depending on $\bm{h}$, $\iota_0$ and $\varepsilon$). Observe now that \eqref{E-3a} implies that there exists $k_1>1$ and $j_0\in\nat$ such that
\begin{equation}
\tau_{j+1}^{-1} h_{j+1}^{-\frac{1}{p_2}} \geq k_1\ \tau_{j}^{-1} h_{j}^{-\frac{1}{p_2}}, \quad j\geq j_0,
\label{B12}
\end{equation}
from which follows, using also the admissibility of $\bm{\tau}$ and $\bm{h}$, that
\begin{equation}
\Big(\sum_{r=1}^{k \iota_0 } h_r^{-\frac{q_2'}{p_2}} \tau_r^{-q_2'}\Big)^{-1}  \sim 
h_{k\iota_0 }^{\frac{q_2'}{p_2}} \tau_{k\iota_0}^{q_2'}
\label{B13}
\end{equation}
(cf. also Remark~\ref{strong-iso-index}). Therefore, putting together \eqref{B9}-\eqref{B11}, we can write
\begin{align*}
\left\| \bm{b} | \ell_{q_1}\right\| &\geq \ c_5\ 
\Big(\sum_{k=k_0}^\infty b_k^{q_2} \sigma_{k\iota_0 }^{-q_2} h_{k\iota_0}^{-\frac{q_2}{p_1} + \frac{q_2'}{p_2}(q_2-1)} \tau_{k\iota_0}^{q_2'(q_2-1)}\Big)^{\frac{1}{q_2}}\\
&= \ c_5\ \Big(\sum_{k=k_0}^\infty b_k^{q_2} \sigma_{k\iota_0 }^{-q_2} h_{k\iota_0}^{q_2(\frac{1}{p_2}-\frac{1}{p_1})}
 \tau_{k\iota_0}^{q_2}\Big)^{\frac{1}{q_2}}.
\end{align*} 
Let $\bm{\beta}=\bm{b}$, $\bm{\alpha}=\left(\sigma_{k \iota_0}^{-1} h_{k\iota_0}^{\frac{1}{p_2}-\frac{1}{p_1}}\tau_{k\iota_0}\right)_k $. Then Lemma~\ref{lemma-Landau} yields
\[
\left(\sigma_{k \iota_0}^{-1} h_{k\iota_0}^{\frac{1}{p_2}-\frac{1}{p_1}}\tau_{k\iota_0}\right)_k \in \ell_{q^\ast}
\]
and the result \eqref{B4} follows by Remark~\ref{subseq-adm}.\\

{\em Step 3}.\quad We turn to the case $0<q_2\leq 1$. Then $\Psi^{\bm{\tau}}_{p_2, q_2'} = \Psi^{\bm{\tau}}_{p_2, \infty}$ and a discretisation argument (see e.g. \cite[Lemma~2.6, Prop.~2.7, Rem.~2.8]{C-Fa}) gives us, with a small enough positive constant $C\leq c_2$,
\[
\Psi^{\bm{\tau}}_{p_2, \infty}(C h_k) \sim \sup_{1\leq i\leq k} \tau_i^{-1} h_i^{-\frac{1}{p_2}}, \quad k\in\nat.
\] 
On the other hand, similarly to \eqref{B12} and \eqref{B13}, assumption \eqref{E-3a} and the admissibility of $\bm{\tau}$ and $\bm{h}$ implies that
\[
\sup_{1\leq i\leq k} \tau_i^{-1} h_i^{-\frac{1}{p_2}} \sim \tau_k^{-1} h_k^{-\frac{1}{p_2}} , \quad k\in\nat.
\]
Denote by $C_1$ and $C_2$ positive constants such that 
\begin{equation}
C_1 \tau_k^{-1} h_k^{-\frac{1}{p_2}} \leq \Psi^{\bm{\tau}}_{p_2, \infty}(C h_k) \leq C_2 \tau_k^{-1} h_k^{-\frac{1}{p_2}}, \quad k\in\nat.
\label{B14}
\end{equation}
Now we should be careful that $\iota_0$ has been chosen large enough in such a way that also
\begin{equation}
\frac{\tau_{(k+1)\iota_0}^{-1} h_{(k+1)\iota_0}^{-\frac{1}{p_2}}}{\tau_{k\iota_0}^{-1} h_{k\iota_0}^{-\frac{1}{p_2}}} \geq \ 2\frac{C_2}{C_1}, \quad k\in\nat.
\label{B15}
\end{equation}
This is clearly possible, due again to \eqref{E-3a}. We now discretise the right-hand side of \eqref{B9} so that, up to a constant factor, it is bounded from below by
\begin{equation}
\Big(\sum_{k=k_0}^\infty \Big(\sum_{r=1}^k b_r \sigma_{r \iota_0}^{-1} h_{r\iota_0}^{-\frac{1}{p_1}}\Big)^{q_2} \tau_{k\iota_0}^{q_2} h_{k\iota_0}^{\frac{q_2}{p_2}} \mu_{\Psi^{\bm\tau}}\left(\left[C h_{(k+1)\iota_0}, Ch_{k\iota_0}\right]\right)\Big)^{\frac{1}{q_2}}.
\label{B16}
\end{equation}
Observing that
\begin{align*}
\mu_{\Psi^{\bm\tau}}\left(\left[C h_{(k+1)\iota_0}, Ch_{k\iota_0}\right]\right) & = -\log \Psi^{\bm{\tau}}_{p_2,\infty}(C h_{k\iota_0}) + \log \Psi^{\bm{\tau}}_{p_2,\infty}(C h_{(k+1)\iota_0})\\
& = \log \frac{\Psi^{\bm{\tau}}_{p_2,\infty}(C h_{(k+1)\iota_0})}{\Psi^{\bm{\tau}}_{p_2,\infty}(C h_{k\iota_0})} \\
&\geq \log \frac{C_1 \tau_{(k+1)\iota_0}^{-1} h_{(k+1)\iota_0}^{-\frac{1}{p_2}}}{C_2 \tau_{k\iota_0}^{-1} h_{k\iota_0}^{-\frac{1}{p_2}}} \ \geq \ 1,
\end{align*}
where we used first \eqref{B14} and afterwards \eqref{B15}, we can further estimate \eqref{B16} from below by
\[
\Big(\sum_{k=k_0}^\infty b_k^{q_2} \sigma_{k\iota_0}^{-q_2} h_{k\iota_0}^{-\frac{q_2}{p_1}} \tau_{k\iota_0}^{q_2}h_{k\iota_0}^{\frac{q_2}{p_2}} \Big)^{\frac{1}{q_2}},
\]
so that, returning to \eqref{B9} we get
\[
\left\| \bm{b} | \ell_{q_1}\right\| \geq c_5\ \Big(\sum_{k=k_0}^\infty b_k^{q_2} \sigma_{k\iota_0}^{-q_2} h_{k\iota_0}^{q_2\left(\frac{1}{p_2}-\frac{1}{p_1}\right)} \tau_{k\iota_0}^{q_2} \Big)^{\frac{1}{q_2}}.
\]
We are now in the same situation as at the end of Step~2, so the last part of the proof follows as there.
\end{proof}

\begin{remark}\label{Thms_nec_suff}{
Analysing the proof we see that in the case $q_2>1$ we could have managed with weaker hypotheses, namely replacing $\upind{\tau \bm{h}^{1/p_2}}<0$, cf. \eqref{E-3a}, by $\bm{\tau}^{-1} \bm{h}^{-1/p_2} \not\in\ell_{q_2'}$, cf. \eqref{B5}, and 
\[\sum_{r=1}^{k \iota_0 } h_r^{-\frac{q_2'}{p_2}} \tau_r^{-q_2'}  \sim 
h_{k\iota_0 }^{-\frac{q_2'}{p_2}} \tau_{k\iota_0}^{-q_2'}, \quad k\in\no,
\] 
cf. \eqref{B13}. In the argument presented above these two assertions are consequences of our assumption \eqref{E-3a}, but what is really needed in Step~2 are \eqref{B5} and \eqref{B13} only. Note that \eqref{B13} is connected with requirements of strong isotropicity for the measure $\mu$ corresponding to $h$ and $\Gamma$, as already pointed out in Remark~\ref{strong-iso-index}.
}\end{remark}

In view of the fact that \eqref{E-3b} and \eqref{T-1}
immediately imply $\lowind{\tau}>0$, and \eqref{lim_emb-2a} and \eqref{B4} coincide for $p_1\leq p_2$, 
we can combine
Theorems~\ref{emb-BGamma-suff} and \ref{emb-BGamma-nec} in that case.

\begin{corollary}
Let $\Gamma$ be an $h$-set satisfying the porosity condition with \eqref{T-1}, 
$0<p_1\leq p_2< \infty$, $0<q_1, q_2< \infty$, and $q^\ast$ be given
by \eqref{q-ast}.  Let $\bm{\sigma}$ and
$\bm{\tau}$ be admissible sequences with \eqref{E-3a}, \eqref{E-3b}. Then the trace spaces $\Bsge$ and $\Btgz$ exist and
\begin{equation*}
\Bsge \ \hookrightarrow\ \Btgz,
\end{equation*}
if, and only if,
\begin{equation}
\bm{\sigma}^{-1} \bm{\tau} \bm{h}^{-\left(\frac{1}{p_1}-\frac{1}{p_2}\right)} \in
\ell_{q^\ast}\ .
\label{no+}
\end{equation}
\label{emb-BGamma-iff}
\end{corollary}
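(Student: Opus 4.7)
The plan is to obtain this corollary essentially as a bookkeeping exercise, stitching together Theorems~\ref{emb-BGamma-suff} and~\ref{emb-BGamma-nec} under the additional restriction $p_1 \leq p_2$; no new analysis is required beyond matching up hypotheses and conclusions.

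First I would verify the preliminary observation stated just before the corollary: under \eqref{T-1} and \eqref{E-3b}, one has $\lowind{\bm{\tau}}>0$. If $p_2 \geq 1$ this is immediate since $(1/p_2-1)_+ = 0$, so \eqref{E-3b} reduces to $\lowind{\bm{\tau}}>0$. If $0 < p_2 < 1$, then \eqref{T-1} gives $\upind{\bm{h}} < 0$, hence also $\lowind{\bm{h}} < 0$, so $-\lowind{\bm{h}} > 0$ and $(1/p_2-1)_+ = 1/p_2-1 > 0$; thus the right-hand side of \eqref{E-3b} is strictly positive, forcing $\lowind{\bm{\tau}}>0$.

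Next, the assumption $p_1 \leq p_2$ makes $\left(\tfrac{1}{p_1}-\tfrac{1}{p_2}\right)_+ = \tfrac{1}{p_1}-\tfrac{1}{p_2}$, so condition \eqref{no+} and the sufficient condition \eqref{lim_emb-2a} of Theorem~\ref{emb-BGamma-suff} are literally the same. For the ``if'' direction I would therefore invoke Theorem~\ref{emb-BGamma-suff} directly: its hypotheses $\lowind{\bm{\tau}}>0$ (just proved) and \eqref{lim_emb-2a} are both in force, so it supplies the existence of both trace spaces together with the embedding $\Bsge \hookrightarrow \Btgz$.

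For the ``only if'' direction, the hypotheses \eqref{E-3a}, \eqref{E-3b}, and \eqref{T-1} required in Theorem~\ref{emb-BGamma-nec} are precisely those assumed in the corollary, and the existence of the two trace spaces together with the embedding is what we are now given. Applying that theorem yields \eqref{B4}, which, thanks once more to $p_1 \leq p_2$, is exactly \eqref{no+}. Since both implications amount to direct applications of the previously-proved theorems, there is no real obstacle; the only point to watch is the identification of the positive-part exponent with the signed one under $p_1 \leq p_2$, which is what makes the two one-sided conditions merge into a single equivalence.
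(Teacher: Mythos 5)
Your proposal is correct and is essentially identical to the paper's own (very short) argument: the authors likewise observe that \eqref{T-1} and \eqref{E-3b} force $\lowind{\tau}>0$ and that \eqref{lim_emb-2a} and \eqref{B4} coincide when $p_1\leq p_2$, and then combine Theorems~\ref{emb-BGamma-suff} and~\ref{emb-BGamma-nec}. Your case analysis for $\lowind{\tau}>0$ and the matching of the positive-part exponent are both accurate, so nothing is missing.
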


\begin{remark}\label{gap_p_1_p_2}{Unlike in case of $p_1\leq p_2$, there is an obvious gap for $p_1>p_2$ between the sufficient condition \eqref{lim_emb-2a} in Theorem \ref{emb-BGamma-suff} and the necessary one \eqref{B4} in Theorem \ref{emb-BGamma-nec}  in view of the embedding \eqref{lim_emb-1a}. We are not yet able to close it. But in view of similar observations in the classical case (embeddings of Besov spaces on a bounded domain $\Omega$), or special cases studied before, we claim that \eqref{lim_emb-2a} is the right one, not \eqref{B4}. In other words, we conjecture that Corollary \ref{emb-BGamma-iff} remains true irrespective of the relation between $p_1$ and $p_2$ if we replace \eqref{no+} by \eqref{lim_emb-2a}.
}
\end{remark}

We end this paper with explicating Corollary~\ref{emb-BGamma-iff} in
case of $d$- and $(d,\Psi)$-sets, recall also Remarks~\ref{example-ball} and \ref{BDef-d-sets}.

\begin{example}\label{emb-d-sets-iff}
Let $0<d<n$, $\Gamma$ be a $d$-set, 
$0<p_1\leq p_2< \infty$, $0<q_1, q_2< \infty$, $s>0$,
$d\left(\frac{1}{p_2}-1\right)_+<t<\frac{d}{p_2}$. 
Then
\[
\mathbb{B}^s_{p_1,q_1}(\Gamma) \hookrightarrow
\mathbb{B}^t_{p_2,q_2}(\Gamma)\quad \text{if, and only if,}\quad \begin{cases}
s-t\geq d\left(\frac{1}{p_1}-\frac{1}{p_2}\right) &
\text{if}\ q_1\leq q_2, \\[1ex]  s-t> d\left(\frac{1}{p_1}-\frac{1}{p_2}\right) &
\text{if}\ q_1> q_2\ .
\end{cases}
\] 
Similarly, if we consider $(d,\Psi)$-sets $\Gamma$, $0<d<n$, $\Psi$ an admissible function, and corresponding trace spaces
  $\mathbb{B}^{(s,\Psi)}_{p,q}(\Gamma)$, recall
  Remark~\ref{BDef-d-sets}, we get for $0<p_1\leq p_2< \infty$, $0<q_1, q_2< \infty$,
  $s>0$, $d\left(\frac{1}{p_2}-1\right)_+<t<\frac{d}{p_2}$, that
\[
\mathbb{B}^{(s,\Psi)}_{p_1,q_1}(\Gamma) \hookrightarrow
\mathbb{B}^{(t,\Psi)}_{p_2,q_2}(\Gamma)
\]
\text{if, and only if,}
\[ 
\left(2^{-j(s-t-d(\frac{1}{p_1}-\frac{1}{p_2}))}
\Psi(2^{-j})^{-(\frac{1}{p_1}-\frac{1}{p_2})} \right)_j \in \ell_{q^\ast}.
\] 
Here we have also used \eqref{index-ex} again.
\end{example}

   
{\bf Acknowledgement.} 
A.M. Caetano was partially supported by Portuguese funds through the CIDMA - Center for Research and Development in Mathematics and Applications, and the Portuguese Foundation for Science and Technology (``FCT - Funda\c c\~ao para a Ci\^encia e a Tecnologia''), within project \linebreak UID/MAT/04106/2013. Dorothee D. Haroske was partially supported by the DFG Heisenberg fellowship HA 2794/1-2. \\
It is our pleasure to thank our colleagues in the Department of
Mathematics at the University of Coimbra for their kind hospitality during our stays there.


\begin{thebibliography}{10}

\bibitem{almeida}
A.~Almeida, \emph{Wavelet bases in generalized {B}esov spaces}, J. Math. Anal.
  Appl. \textbf{304} (2005), no.~1, 198--211.

\bibitem{BS}
C.~Bennett and R.~Sharpley, \emph{Interpolation of operators}, Academic Press,
  Boston, 1988.

\bibitem{BGT}
N.H. Bingham, C.M. Goldie, and J.L. Teugels, \emph{Regular variation},
  Encyclopedia of Mathematics and its Applications, vol.~27, Cambridge Univ.
  Press, Cambridge, 1987.

\bibitem{bricchi-5}
M.~Bricchi, \emph{Compact embeddings between {B}esov spaces defined on
  $h$-sets}, Funct. Approx. Comment. Math. \textbf{30} (2002), no.~1, 7--36.

\bibitem{bricchi-4}
\bysame, \emph{Existence and properties of $h$-sets}, Georgian Math. J.
  \textbf{9} (2002), no.~1, 13--32.

\bibitem{bricchi-diss}
\bysame, \emph{Tailored function spaces and related $h$-sets}, Ph.D. thesis,
  Friedrich-Schiller-Universit\"at Jena, Germany, 2002.

\bibitem{bricchi-fsdona}
\bysame, \emph{Complements and results on $h$-sets}, Function Spaces,
  Differential Operators and Nonlinear Analysis - The Hans Triebel Anniversary
  Volume (D.D. Haroske, Th. Runst, and H.J. Schmei{\ss}er, eds.),
  Birkh{\"a}user, Basel, 2003, pp.~219--230.

\bibitem{bricchi-3}
\bysame, \emph{Tailored {B}esov spaces and $h$-sets}, Math. Nachr.
  \textbf{263-264} (2004), no.~1, 36--52.

\bibitem{Cae-env-h}
A.M. Caetano, \emph{Growth envelopes of {B}esov spaces on fractal $h$-sets},
  Math. Nachr. \textbf{286} (2013), no.~5-6, 550--568.

\bibitem{C-Fa}
A.M. Caetano and W.~Farkas, \emph{Local growth envelopes of {B}esov spaces of
  generalized smoothness}, Z. Anal. Anwendungen \textbf{25} (2006), 265--298.

\bibitem{CaH-2}
A.M. Caetano and D.D. Haroske, \emph{Continuity envelopes of spaces of
  generalised smoothness: a limiting case; embeddings and approximation
  numbers}, J. Funct. Spaces Appl. \textbf{3} (2005), no.~1, 33--71.

\bibitem{CaH-3}
\bysame, \emph{Traces for {B}esov spaces on fractal $h$-sets and dichotomy
  results}, submitted, 2014.

\bibitem{Ca-L-Lloc}
A.M. Caetano and H.-G. Leopold, \emph{On generalized {B}esov and
  {T}riebel-{L}izorkin spaces of regular distributions}, J. Funct. Anal.
  \textbf{264} (2013), 2676--2703.

\bibitem{CL-3}
A.M. Caetano and S.~Lopes, \emph{The fractal {D}irichlet {L}aplacian}, Rev.
  Mat. Complut. \textbf{24} (2011), no.~1, 189--209.

\bibitem{CL-2}
\bysame, \emph{Spectral theory for the fractal {L}aplacian in the context of
  {$h$}-sets}, Math. Nachr. \textbf{284} (2011), no.~1, 5--38.

\bibitem{C-DM-2}
A.M. Caetano and S.D. Moura, \emph{Local growth envelopes of spaces of
  generalized smoothness: the critical case}, Math. Inequal. Appl. \textbf{7}
  (2004), no.~4, 573--606.

\bibitem{C-DM}
\bysame, \emph{Local growth envelopes of spaces of generalized smoothness: the
  subcritical case}, Math. Nachr. \textbf{273} (2004), 43--57.

\bibitem{C-F}
F.~Cobos and D.L. Fernandez, \emph{{H}ardy-{S}obolev spaces and {B}esov spaces
  with a function parameter}, Function spaces and applications (M.~Cwikel and
  J.~Peetre, eds.), Lecture Notes in Mathematics, vol. 1302, Proc. US-Swed.
  Seminar held in Lund, June, 1986, Springer, 1988, pp.~158--170.

\bibitem{dVL}
R.~DeVore and G.G. Lorentz, \emph{Constructive approximation}, Grundlehren der
  Mathemati\-schen Wissenschaften, vol. 303, Springer, Berlin, 1993.

\bibitem{EKP}
D.E. Edmunds, R.~Kerman, and L.~Pick, \emph{Optimal {S}obolev imbeddings
  involving rearrangement-invariant quasinorms}, J. Funct. Anal. \textbf{170}
  (2000), 307--355.

\bibitem{ET}
D.E. Edmunds and H.~Triebel, \emph{Function spaces, entropy numbers,
  differential operators}, Cambridge Univ. Press, Cambridge, 1996.

\bibitem{ET6}
\bysame, \emph{Spectral theory for isotropic fractal drums}, C. R. Acad. Sci.
  Paris \textbf{326} (1998), no.~11, 1269--1274.

\bibitem{ET7}
\bysame, \emph{Eigenfrequencies of isotropic fractal drums}, Oper. Theory Adv.
  Appl. \textbf{110} (1999), 81--102.

\bibitem{falconer}
K.J. Falconer, \emph{The geometry of fractal sets}, Cambridge Univ. Press,
  Cambridge, 1985.

\bibitem{F-L}
W.~Farkas and H.-G. Leopold, \emph{Characterisation of function spaces of
  generalised smoothness}, Ann. Mat. Pura Appl. \textbf{185} (2006), no.~1, 1
  -- 62.

\bibitem{hardy}
G.H. Hardy, J.E. Littlewood, and G.~P{\'o}lya, \emph{Inequalities}, 2nd ed.,
  Cambridge Univ. Press, Cambridge, 1952.

\bibitem{HaHabil}
D.D. Haroske, \emph{Limiting embeddings, entropy numbers and envelopes in
  function spaces}, Habi\-li\-ta\-tions\-schrift,
  Friedrich-Schiller-Universit\"at Jena, Germany, 2002.

\bibitem{Ha-crc}
\bysame, \emph{Envelopes and sharp embeddings of function spaces}, Chapman \&
  Hall/CRC Research Notes in Mathematics, vol. 437, Chapman \& Hall/CRC, Boca
  Raton, FL, 2007.

\bibitem{Ha-SM-2}
D.D. Haroske and S.D. Moura, \emph{Continuity envelopes and sharp embeddings in
  spaces of generalized smoothness}, J. Funct. Anal. \textbf{254} (2008),
  no.~6, 1487--1521.

\bibitem{HaSch}
D.D. Haroske and C.~Schneider, \emph{Besov spaces with positive smoothness on
  {$\rn$}, embeddings and growth envelopes}, J. Approx. Theory \textbf{161}
  (2009), no.~2, 723--747.

\bibitem{JW-1}
A.~Jonsson and H.~Wallin, \emph{Function spaces on subsets of {$\rn$}}, Math.
  Rep. Ser. 2, No.1, xiv + 221 p., 1984.

\bibitem{Kal-5}
G.A. Kalyabin, \emph{Criteria for the multiplicativity of spaces of
  {B}esov-{L}izorkin- {T}riebel type and their imbedding into {$C$}}, Math.
  Notes \textbf{30} (1982), 750--755, Transl. from Mat. Zametki 30, 517-526
  (1981).

\bibitem{Kal-Liz}
G.A. Kalyabin and P.I. Lizorkin, \emph{Spaces of functions of generalized
  smoothness}, Math. Nachr. \textbf{133} (1987), 7--32.

\bibitem{K-Z}
V.~Knopova and M.~Z{\"a}hle, \emph{Spaces of generalized smoothness on $h$-sets
  and related {D}irichlet forms}, Studia Math. \textbf{174} (2006), no.~3,
  277--308.

\bibitem{lizorkin}
P.I. Lizorkin, \emph{Spaces of generalized smoothness}, Mir, Moscow (1986),
  381--415, Appendix to Russian ed. of \cite{T-F1}; Russian.

\bibitem{lopes-diss}
S.~Lopes, \emph{Besov spaces and the Laplacian on fractal $h$-sets}, Ph.D.
  thesis, Universidade de Aveiro, Portugal, 2009.

\bibitem{mattila}
P.~Mattila, \emph{Geometry of sets and measures in euclidean spaces}, Cambridge
  Univ. Press, Cambridge, 1995.

\bibitem{merucci}
C.~Merucci, \emph{Applications of interpolation with a function parameter to
  {L}orentz, {S}obolev and {B}esov spaces}, Interpolation spaces and allied
  topics in analysis (M.~Cwikel and J.~Peetre, eds.), Lecture Notes in
  Mathematics, vol. 1070, Proc. Conf., Lund/Swed. 1983, Springer, 1984,
  pp.~183--201.

\bibitem{moura}
S.D. Moura, \emph{Function spaces of generalised smoothness}, Dissertationes
  Math. \textbf{398} (2001), 88 pp.

\bibitem{moura-diss}
\bysame, \emph{Function spaces of generalised smoothness, entropy numbers,
  applications}, Ph.D. thesis, Universidade de Coimbra, Portugal, 2002.

\bibitem{neves-01}
J.S. Neves, \emph{{L}orentz-{K}aramata spaces, {B}essel and {R}iesz potentials
  and embeddings}, Dissertationes Math. \textbf{405} (2002), 46 pp.

\bibitem{neves-02}
\bysame, \emph{Spaces of {B}essel-potential type and embeddings: the
  super-limiting case}, Math. Nachr. \textbf{265} (2004), 68--86.

\bibitem{Rogers}
C.A. Rogers, \emph{Hausdorff measures}, Cambridge Univ. Press, London, 1970.

\bibitem{SiT}
W.~Sickel and H.~Triebel, \emph{H\"older inequalities and sharp embeddings in
  function spaces of {$B^s_{p,q}$} and {$F^s_{p,q}$} type}, Z. Anal.
  Anwendungen \textbf{14} (1995), 105--140.

\bibitem{T-F1}
H.~Triebel, \emph{Theory of function spaces}, Birkh\"auser, Basel, 1983,
  Reprint (Modern Birkh\"auser Classics) 2010.

\bibitem{T-F2}
\bysame, \emph{Theory of function spaces {II}}, Birkh\"auser, Basel, 1992,
  Reprint (Modern Birkh\"auser Classics) 2010.

\bibitem{T-Frac}
\bysame, \emph{Fractals and spectra}, Birkh\"auser, Basel, 1997, Reprint
  (Modern Birkh\"auser Classics) 2011.

\bibitem{T-func}
\bysame, \emph{The structure of functions}, Birkh\"auser, Basel, 2001.

\bibitem{T-F3}
\bysame, \emph{Theory of function spaces {III}}, Birkh\"auser, Basel, 2006.

\bibitem{zygmund}
A.~Zygmund, \emph{Trigonometric series}, 2nd ed., Cambridge Univ. Press,
  Cambridge, 1977.

\end{thebibliography}

\providecommand{\bysame}{\leavevmode\hbox to3em{\hrulefill}\thinspace}
\providecommand{\MR}{\relax\ifhmode\unskip\space\fi MR }
\providecommand{\MRhref}[2]{%
  \href{http://www.ams.org/mathscinet-getitem?mr=#1}{#2}
}
\providecommand{\href}[2]{#2}

\end{document}